\newtheorem{corollary}{Corollary}
\newtheorem{proposition}{Proposition}
\newtheorem{lemma}{Lemma}
\newtheorem{theorem}{Theorem}
\newtheorem{definition}{Definition}
\newtheorem{remark}{Remark}
\newtheorem{observation}{Observation}
\newcommand\wt{\widetilde}
\newcommand\ov{\overline}
\newcommand\inj{\rightarrowtail}
\newcommand\surj{\twoheadrightarrow}
\long\def\symbolfootnote[#1]#2{\begingroup%
\def\thefootnote{\fnsymbol{footnote}}\footnote[#1]{#2}\endgroup} 
\begin{document}
\title{{Polynomially bounded cohomology and the Novikov Conjecture}}
\date{}
\author{Crichton Ogle\\
Dept. of Mathematics\\The Ohio State University\\
\texttt{ogle@math.ohio-state.edu}}
\maketitle
\begin{abstract} Using techniques developed for studying polynomially bounded cohomology, we show that the assembly map for $K_*^t(\ell^1(G))$ is rationally injective for all finitely presented discrete groups $G$. This verifies the $\ell^1$-analogue of the Strong Novikov Conjecture for such groups. The same methods show that the Strong Novikov Conjecture for all finitely presented groups can be reduced to proving a certain (conjectural) rigidity of the topological cyclic chain complex $CC_*^t(H^{CM}_m(F))$ where $F$ is a finitely-generated free group and $H^{CM}_m(F)$ is the \lq\lq maximal\rq\rq Connes-Moscovici algebra associated to $F$.
\end{abstract}
\tableofcontents
\symbolfootnote[0]{2000 {\it Mathematics Subject Classification}. Primary 58B34; Secondary 18G10, 18G30, 18G35, 18G40, 18G60, 19K56, 46H25, 46L80, 46L87, 46M20, 55N35, 55T05, 58B34.}
\symbolfootnote[0]{{\it Key words and phrases}. Strong Novikov Conjecture, simplicial rapid decay algebras, polynomially bounded cohomology.}
\vskip.5in


\newpage
\section{Introduction}
\vskip.4in
The starting point for the work presented here is the following
\vskip.25in

{\bf\underbar{ Conjecture - Novikov, 1970}} (NC) {\it Let $M$ be a closed, compact, oriented $n$-dimensional manifold, ${\cal L}(M)$ its total Hirzebruch $L$-class, $[M]$ its fundamental homology class, and $\iota:M\to B\pi$. Then for every $[c]\in H^*(B\pi;\mathbb Q)$, the higher signatures
\[
Sign_c(M) := \left<{\cal L}(M)\iota^*(c),[M]\right>\in \mathbb Q
\]
are invariants of the oriented homotopy type of $M$}.
\vskip.25in

Let NC($\pi$) denote the NC is true for the discrete group $\pi$. Early work of Novikov and Lustig showed NC($\pi$) was true for $\pi = \mathbb Z^n$. However, further progress required a reformulation. Beginning with
$\mathbb{L}_{\bullet}(R) =$ the (symmetric/quadratic) $L$-theory spectrum of a ring with involution $R$, one has the Mishchenko-Quinn-Ranicki-Wall Assembly map
\[
{\cal A}^{\mathbb L}_*(\pi): H_*(B\pi;\mathbb{L}_{\bullet}(\mathbb Z))\to L_*(\mathbb Z[\pi]) = \pi_*(\mathbb{L}_{\bullet}(\mathbb Z[\pi]))
\]
\vskip.25in

{\bf\underbar{The Mishchenko-Ranicki-Wall reformulation}} (MRW-NC) {\it The assembly map ${\cal A}^{\mathbb L}_*(\pi)$ is rationally injective}.
\vskip.25in

Let MRW-NC($\pi$) denote this conjecture for the particular group $\pi$. Wall [W1] showed that for any discrete group $\pi$, MRW-NC($\pi$) implies NC($\pi$). Subsequently, Mishchenko and Ranicki showed the converse - that NC($\pi$) implies MRW-NC($\pi$) - when $\pi$ is finitely presented\footnote{If NC($\pi$) is true for all finitely presented groups, it is true for all groups, and S. Weinberger has pointed out that validity of MRW-NC($\pi$) for all finitely presented groups $\pi$ implies MRW-NC for all discrete groups, via a suitable direct limit argument. It is not clear if such a statement holds for the Banach algebra versions of the conjecture stated below.}.
\vskip.25in

For a given finitely presented group $\pi$, NC($\pi$) will refer either to the original conjecture posed by Novikov, or the equivalent conjecture MRW-NC($\pi$). The first major breakthrough
using this formulation of NC was achieved in the mid 1970's by Mishchenko, who showed NC($\pi$) is true whenever $\pi$ is the fundamental group of a closed, compact (oriented) manifold of negative curvature. A number of years later, another major step was taken by G. Kasparov. He formulated the so-called Strong Novikov Conjecture, which for a given discrete group $\pi$ states
\vskip.25in

{\bf\underbar{SNC($\pi$) - Kasparov}} {\it Let $C^*(\pi)$ be a faithful $C^*$-completion of $\mathbb C[\pi]$. Then the assembly map in topological $K$-theory
\[
{\cal A}_*(\pi): H_*(B\pi;\mathbb{K}^t_{\bullet}(\mathbb C))\to K^t_*(C^*(\pi)) 
\]
is rationally injective.}
\vskip.25in
It is easily seen that SNC($\pi$) implies NC($\pi$)  using the fact that the topological Witt groups and topological $K$-groups of a $C^*$-algebra are naturally isomorphic, but this stronger version has other consequences as well, hence its name. Kasparov then showed in [GK] that SNC($\pi$) is true (for the reduced $C^*$-algebra $C^*_r(\pi)$) whenever $\pi$ is a cocompact discrete subgroup of a virtually connected Lie Group. This stronger version of the conjecture has been verified for a number of other classes of groups; in particular, for word-hyperbolic groups [CM], and amenable groups [HR].
\vskip.25in

Inside of $C^*(\pi)$ lies the smaller convolution algebra $\ell^1(\pi)$ of $\ell^1$-functions on $\pi$, and one can formulate an $\ell^1$-analogue of Kasparov's conjecture:
\vskip.25in

{\bf\underbar{$\ell^1$-NC($\pi$)}} {\it The assembly map in topological $K$-theory
\[
{\cal A}_*(\pi): H_*(B\pi;\mathbb{K}^t_{\bullet}(\mathbb C))\to K^t_*(\ell^1(\pi)) 
\]
is rationally injective.}
\vskip.25in

There is a factorization
\[
{\cal A}_*(\pi): H_*(B\pi;\mathbb{K}^t_{\bullet}(\mathbb C))\to K^t_*(\ell^1(\pi))\to K^t_*(C^*(\pi)) 
\]
by which one sees that SNC($\pi$) implies $\ell^1$-NC($\pi$); or equivalently, that any counterexample to $\ell^1$-NC($\pi$) would provide a counterexample to SNC($\pi$). 
\vskip.25in

Until recently the class of groups for which these last two conjectures were known was essentially the same. In [O1], and independently in [M1], it was shown that groups which are synchronously combable in polynomial time are {\it $\cal P$-isocohomological}, implying every (complex) group cohomology class is represented by a cocycle of polynomial growth (this is the (PC) condition of [CM]). By [PJ1] and [PJ2], any group satisfying this condition also satisfies $\ell^1$-NC($\pi$). In [JOR2] the isocohomological results of [O1] were extended, yielding many more groups which are $\cal P$-isocohomological - and which therefore also satisfy $\ell^1$-NC($\pi$) - for which the original Novikov Conjecture is not currently known. However, as is also shown in [JOR2], there are many groups (including even solvable groups) which do not satisfy condition (PC). In fact, for finitely presented groups which are not of type $FP^{\infty}$, it is easily seen that there exist cohomology classes which are completely unbounded with respect to a fixed word-length function on $G$. So the $C$-extendability methods of [CM], or their $\ell^1$-analogue, clearly do not apply to such cohomology classes. And so for groups with rational homology detectable only by such classes, verifying even $\ell^1$-NC($\pi$) via classical cocycle extension techniques and cyclic theory is problematic. Our main result is that these obstructions can be bypassed, at least for the $\ell^1$-algebra. Precisely, we show
\vskip.25in

{\bf\underbar{Theorem A}} The conjecture $\ell^1$-NC($\pi$) is true for all finitely presented groups $\pi$.
\vskip.25in

The proof of Theorem A follows by a detailed analysis of simplicial rapid decay algebras; simplicial analogues of the rapid decay algebras introduced by Jolissaint in [PJ1], [PJ2]. We begin in section 2 with some preliminaries concerning Hochschild and cyclic homology. We also explain (using techniques from [O2]) how Theorem A follows from verifying the injectivity of the {\it restricted} assembly map
\[
H_*(B\pi;\mathbb Q)\to K_*^t(\ell^1(\pi))\otimes\mathbb Q
\]
 Although not essential, this reduction simplifies various considerations later on.
\vskip.25in

In section 3.1, we construct the $\ell^1$-rapid decay algebra associated to a p-bounded simplicial group (as defined in [O1]). In sections 3.2 and 3.3, we give the analogous extension to p-bounded simplicial groups of the Connes-Moscovici algebra, and relate it to the simplicial $\ell^2$-rapid decay (aka Jolissaint) algebra when the group is degreewise rapid decay (e.g., free). We also define a \lq\lq maximal\rq\rq Connes-Moscovici algebra, which (like its $\ell^1$-analogue) has the advantage of producing a functor $(f.p.groups)\mapsto (Fr\acute{e}chet\ algebras)$ from the category of finitely presented groups to the category of Fr\'echet algebras, which can therefore be naturally extended to a simplicial framework.
\vskip.25in

In section 4, we review the spectral sequences arising in Hochschild and cyclic (co-)homology for the simplicial algebras considered in the previous section, beginning with the simplicial group algebra. The prototypical spectral sequences are those for the simplicial group algebra $\mathbb C[\Gamma\hskip-.03in .]$ when $\Gamma\hskip-.03in .$ is a free simplicial resolution of $\pi$. In section 4.2 we introduce the key ingredient in the proof of the Theorem, namely the construction of a local Chern character associated to an arbitrary integral homology class $x\in H_n(B\pi;\mathbb Q)$. This Chern character, defined for both $K_*^t(\ell^1(\pi))$ and $K_*^t(C^*(\pi))$, is taylor-made to detect the image of $x$ under the rationalized assembly map, and results from a combination of Baum's retopologization Theorem, the Chern character for fine topological algebras constructed by Tillmann in [T1] (results discussed in the first appendix), and the \lq\lq rigidity\rq\rq\  results of Goodwillie [G1]. In section 4.3, we complete the proof of Theorem A by a careful analysis of the spectral sequence for the topological cyclic homology of the $\ell^1$-rapid decay algebra constructed in the previous section. Finally, in section 4.4, we prove a weaker injectivity result for the maximal Connes-Moscovici algebra $H^{CM}_m(G)$. The main result of this section, similar in spirit to the main theorem of [O2], is that an injectivity result holds for the topological \underbar{Hochschild} homology groups of a certrain simplicial Fr\'echet algebra, and that if the same result holds upon passage (via the $I$ map) from the topological Hochschild to the topological cyclic homology groups of this algeba, this would imply the Strong Novikov Conjecture for all finitely presented groups.
\vskip.25in

This paper incorporates a number of ideas from the multiply-revised preprint [O0], which was also a source for [O1] and [O2].  In some ways this paper represents the final iteration of what was begun in [O0], at least as far as the $\ell^1$-group algebra is concerned.
\vskip.25in

 I would like to thank Dan Burghelea for his support and encouragement during the evolution of this work, as well as the various (anonymous) referees whose criticisms over the years, applied to various stages of [O0] (including the current version), have proved to be quite valuable.
\newpage


\section{Preliminaries}
\vskip.4in

In this paper all vector spaces (both algebraic and topological), algebras, tensor products, Hom groups, are over $\mathbb C$. Moreover, homology and cohomology will always be with coefficients in $\mathbb C$, unless indicated otherwise by notation. 
\vskip.2in

For an algebra $A$, we will write $CH_*^a(A)$ resp.\ $CC_*^a(A)$ for the algebraic Hochschild resp.\ cyclic complex of $A$, with $HH_*^a(A)$ resp.\ $HC_*^a(A)$ denoting their corresponding homology groups. In a similar vein, we set $CH^*_a(A) := Hom(CH_*^a(A),\mathbb C)$,  $CC^*_a(A) := Hom(CC_*^a(A),\mathbb C)$ with associated cohomology groups denoted by $HH^*_a(A)$ resp.\ $HC^*_a(A)$. More generally, for a vector space $V$ we will denote $Hom(CH_*^a(A),V)$ resp.\ $Hom(CC_*^a(A),V)$ by $CH^*_a(A;V)$ resp. $CC^*_a(A;V)$, and the corresponding cohomology groups by $HH^*_a(A;V)$ resp.\ $HC^*_a(A;V)$. In the case of Hochschild homology and cohomology, this should not be confused with the notion of the Hochschild (co-)homology of $A$ with coefficients in an $A$-bimodule $M$. We assume familiarity with the traditional constructions and properties of these theories; an appropriate reference is [L]. In the event the subscript (resp.\ superscript) \lq\lq a\rq\rq is omitted, it will be understood we are referring to algebraic Hochschild or cyclic (co)-homology.
\vskip.2in

If $A$ is a locally convex topological algebra, then the Hochschild and cyclic complexes for $A$ may be formed using the projectively complete tensor product, resulting in topological chain complexes $CH_*^t(A)$ resp. $CC_*^t(A)$. If $W$ is a topological vector space, one also has $CH^*_t(A;W) := Hom^{cont}(CH_*^t(A),W)$, $CC^*_t(A;W) := Hom^{cont}(CC_*^t(A),W)$. If $D_* = (D_*,d_*)$ is a topological chain complex, the \underbar{unreduced} resp.\ \underbar{reduced} homology of $D_*$ is given by
\begin{gather*}
H_n^t(D_*) := ker(d_n)/im(d_{n+1})\\
\overline{H}_n^t(D_*) := ker(d_n)/\overline{im(d_{n+1})}
\end{gather*}
One has similarly defined groups in cohomology:
\begin{gather*}
H_t^n(Hom^{cont}(D_*,W)) := ker(\delta^n)/im(\delta^{n-1})\\
\overline{H}_t^n(Hom^{cont}(D_*,W)) := ker(\delta^n)/\overline{im(\delta^{n-1})}
\end{gather*}
Although there is no Universal Coefficient Theorem in the topological setting, there is still a natural homomorphism
\begin{gather*}
H_t^n(Hom^{cont}(D_*,W))\to Hom^{cont}(\overline{H}^t_*(D_*),W),\,\,\text{indicated by}\\
H_t^n(Hom^{cont}(D_*,W))\ni [f]\mapsto [f]_*: \overline{H}^t_*(D_*)\to W
\end{gather*}
\vskip.2in

When $A$ is equipped with a topology,  we will write $F^a_*(A)$ resp. $F^*_a(A)$ for $F^t_*(A^{\delta})$ resp. $F^*_t(A^{\delta})$, where $A^{\delta}$ denotes $A$ with the discrete topology and $F$ represents one of the (co-)chain complex functors or associated (co-)homology functors discussed above. For any choice of $F$, there are natural transformations $F^a_*(_-)\to F^t_*(_-)$, $F^*_t(_-)\to F^*_a(_-)$. Topologies on $A$ are assumed to be continuous over $\mathbb C$. For a given $A$, the \underbar{fine topology} on $A$ will be denoted by $A^f$. For any other continuous topology $A^T$ on $A$, the identity map on elements determines a continuous map $A^f\to A^T$. Because the algebraic tensor product is complete in the fine topology, there are isomorphisms of graded vector spaces
\begin{gather*}
F^a_*(A)\cong F^t_*(A^f)\cong \ov{F}^t_*(A^f)\\
F_a^*(A)\cong F_t^*(A^f)\cong \ov{F}_t^*(A^f)
\end{gather*}
for $F(_-) = HH(_-), HC(_-)$.
\vskip.2in

For the complex group algebra $\mathbb C[\pi]$, there are well-known decompositions of $CH_*(\mathbb C[\pi])$ and $CC_*(\mathbb C[\pi])$ as direct sums of subcomplexes, indexed on $<\pi> =$ the set of conjugacy classes of $\pi$, which induce corresponding decompositions in homology:
\begin{gather*}
CH_*(\mathbb C[\pi])\cong \bigoplus_{<x>\in <\pi>} CH_*(\mathbb C[\pi])_{<x>}\\
CC_*(\mathbb C[\pi])\cong \bigoplus_{<x>\in <\pi>} CC_*(\mathbb C[\pi])_{<x>}\\
HH_*(\mathbb C[\pi])\cong \bigoplus_{<x>\in <\pi>} HH_*(\mathbb C[\pi])_{<x>}\\
HC_*(\mathbb C[\pi])\cong \bigoplus_{<x>\in <\pi>} HC_*(\mathbb C[\pi])_{<x>}
\end{gather*}
In cohomology, one has similar isomorphisms, with a direct sum of complexes replaced by a direct product of cocomplexes. The projection onto the summand indexed by $<x>$, on both the (co-)chain or (co-)homology level, will be denoted by $p_{<x>}$. In this paper, we will be primarily concerned with the projection $p_{<1>}$. For the $\ell^1$-rapid decay algebra $H^{1,\infty}_L(\pi)$ associated to a discrete group with word-length $(\pi,L)$, viewed as a Frech\'et algebra, one has a similar decomposition on the chain level (cf. [RJ], [JOR1])
\begin{gather*}
CH^t_*(H^{1,\infty}(\pi))\cong \widehat{\bigoplus}_{<x>\in <\pi>} CH^t_*(H^{1,\infty}(\pi))_{<x>}\\
CC^t_*(H^{1,\infty}(\pi))\cong \widehat{\bigoplus}_{<x>\in <\pi>} CC^t_*(H^{1,\infty}(\pi))_{<x>}
\end{gather*}
This produces projection maps, also denoted $p_{<x>}$, for both topological (co-)chains, as well as topological (co-)homology, both reduced and unreduced. On the (co-)chain level and for the reduced theories, these projection maps are continuous in the induced topology.
\vskip.2in

In this paper, the category of finitely presented groups will play a central role. In order to maintain control on the size of this category, we fix a countable set ${\cal R}_U := \{x_{\alpha}\}_{\alpha\in {\cal I}}$, where $\cal I$ is a countable indexing set. The objects of the category $(f.p.groups)$ are finitely presented groups $G = <R,W>$ on a generating set $R$ which is required to be a finite subset of ${\cal R}_U$, where the presentation is included as part of the data associated to the object. The morphisms of $(f.g.groups)$ are group homomorphisms. Each object in $(f.g.groups)$ is equipped with a standard word-length function $L_{st}$. As the groups are finitely presented, restriction of generating sets to finite subsets of the countable universal collection of generators ${\cal R}_U$ results in $Obj(f.p.groups)$ being a countable set. Additionally, the fixed word-length function shows that each $Hom$ set $Hom(G,G')$ is countable as well (these facts are used below in the construction of the \lq\lq maximal\rq\rq analogue of the Connes-Moscovici algebra).
\vskip.2in

For a Banach algebra $A$, let $\underline{\underline{K}}^t(A)$ denote the topological $K$-theory spectrum for $A$. Let $\underline{\underline{S}}$ denote the sphere spectrum (i.e., the reduced suspension spectrum of $S^0$). Then $1\in K_0^t(\mathbb C)$ correpsonds to a map of spectra
$\underline{\underline{S}}\to \underline{\underline{K}}^t(\mathbb C)$; precomposition of this \lq\lq unit\rq\rq with the full assembly map yields the restricted assembly map on the level of spectra:
\begin{equation}\label{eqn:restricted}
B\pi_+\wedge \underline{\underline{S}}\to B\pi_+\wedge \underline{\underline{K}}^t(\mathbb C)\to \underline{\underline{K}}^t(\ell^1(\pi))
\end{equation}

\begin{lemma} The full asssembly map $B\pi_+\wedge \underline{\underline{K}}^t(\mathbb C)\to \underline{\underline{K}}^t(\ell^1(\pi))$ is rationally injective on homotopy groups iff the restricted assembly map is so.
\end{lemma}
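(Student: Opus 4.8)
The plan is to rationalize everything and to exploit the $K_*^t(\mathbb C)$-module structure carried by the assembly map. Writing $\beta\in K_2^t(\mathbb C)$ for the Bott element, one has $K_*^t(\mathbb C)\otimes\mathbb Q\cong\mathbb Q[\beta,\beta^{-1}]$, a graded field concentrated in even degrees, and Bott periodicity makes $K_*^t(\ell^1(\pi))\otimes\mathbb Q$ a graded module over it on which $\beta$ acts invertibly. Since rationally $\underline{\underline K}^t(\mathbb C)$ splits as $\prod_n\Sigma^{2n}H\mathbb Q$, smashing with $B\pi_+$ identifies $\pi_*(B\pi_+\wedge\underline{\underline K}^t(\mathbb C))\otimes\mathbb Q$ with the free $\mathbb Q[\beta,\beta^{-1}]$-module $H_*(B\pi;\mathbb Q)\otimes_{\mathbb Q}\mathbb Q[\beta,\beta^{-1}]$ on the generators $H_*(B\pi;\mathbb Q)$, the latter sitting in the $\beta^0$-summand. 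The full assembly map then becomes a graded $\mathbb Q[\beta,\beta^{-1}]$-linear map $\Phi$ out of this free module, and by naturality of the module structure it is exactly the $\beta$-linear extension of its restriction $A$ to the generators; that restriction is precisely the rationalized restricted assembly map $H_*(B\pi;\mathbb Q)\to K_*^t(\ell^1(\pi))\otimes\mathbb Q$. Thus the statement amounts to asserting that $\Phi$ is injective iff $A$ is.

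First I would dispose of the forward implication, which is formal. The unit $\underline{\underline S}\to\underline{\underline K}^t(\mathbb C)$ is rationally the inclusion of the wedge summand $\Sigma^0 H\mathbb Q$ into $\prod_n\Sigma^{2n}H\mathbb Q$, hence admits a rational retraction; after smashing with $B\pi_+$ it becomes the split inclusion of the $\beta^0$-summand $H_*(B\pi;\mathbb Q)\hookrightarrow H_*(B\pi;\mathbb Q)\otimes\mathbb Q[\beta,\beta^{-1}]$, which is injective on rational homotopy. Since the restricted assembly map is the composite of this inclusion with $\Phi$, rational injectivity of $\Phi$ immediately forces rational injectivity of the restricted map, i.e. of $A$.

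The substance is the converse, and here the module picture already pinpoints the main obstacle. A nonzero homogeneous element of $\ker\Phi$ in degree $d$ has the form $\xi=\sum_n v_n\beta^n$ with $v_n\in H_{d-2n}(B\pi;\mathbb Q)$, and $\Phi(\xi)=\sum_n\beta^n A(v_n)=0$; injectivity of $A$ controls each summand $A(v_n)$ individually but says nothing about cancellation among the Bott-translates $\beta^n A(v_n)$, which all live in the single group $K_d^t(\ell^1(\pi))\otimes\mathbb Q$ because $\beta$ acts invertibly on the target. Formal injectivity-on-generators is therefore genuinely insufficient: one must show that the subspaces $\beta^n\,\mathrm{Im}(A)$ assemble to a direct sum, equivalently that $\mathrm{Im}(A)$ meets the Bott-redundancy kernel of the module action trivially. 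I would establish this by producing, for each class $x\in H_k(B\pi;\mathbb Q)$, a detecting functional on $K_*^t(\ell^1(\pi))\otimes\mathbb Q$ that is homogeneous of degree $k$, vanishing on the image of $\beta$ from below while pairing nontrivially with the assembled image of $x$. Applied to the lowest-degree generator occurring in $\xi$, such a functional would annihilate every other Bott-translate and return the value of $A$ on that generator, contradicting its non-vanishing. This degree-homogeneous detection is exactly the role I expect to be played by the local Chern character assembled from Tillmann's Chern character and Goodwillie's rigidity theorem; securing its degree-purity, so that it separates the Bott filtration rather than merely detecting classes up to Bott periodicity, is the crux and the step I expect to be hardest.
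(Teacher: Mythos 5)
Your reduction of the lemma to a module-theoretic statement is sound: rationally the full assembly map is indeed the $\mathbb Q[\beta,\beta^{-1}]$-linear extension of the restricted one, and your forward implication is complete. You have also correctly diagnosed where the real content of the converse lies --- injectivity of $A$ on the generators $H_*(B\pi;\mathbb Q)$ does not by itself prevent cancellation among the Bott translates $\beta^n A(v_n)$ inside the single group $K^t_d(\ell^1(\pi))\otimes\mathbb Q$, so one must show that the subspaces $\beta^n\,\mathrm{Im}(A|_{H_{d-2n}})$ sum directly. But at exactly this point the proposal stops being a proof: the degree-homogeneous detecting functionals you invoke are never constructed, and you yourself flag their ``degree-purity'' (that they separate the Bott filtration rather than merely detecting classes modulo periodicity) as the hardest step. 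That property \emph{is} the converse, so the argument as written has a genuine gap. Note also that the local Chern characters built from Tillmann's character and Goodwillie's rigidity are introduced later in the paper in the service of proving injectivity of the \emph{restricted} map (Theorem A); nothing in their construction supplies the Bott-degree separation you would need here.

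The paper closes this gap by a different mechanism, imported from [O2]: an augmented free simplicial resolution of $\pi$ induces compatible filtrations on $\pi_*(B\pi_+\wedge\underline{\underline{K}}^t(\mathbb C))$ and on $K^t_*(\ell^1(\pi))$ (the latter as in [O2, (1.5)]), the assembly map is filtration-preserving, and the filtration on $KU_*(B\pi)\otimes\mathbb Q$ is rationally the skeletal one. Since a Bott translate $\beta^n v_n$ with $v_n\in H_{d-2n}(B\pi;\mathbb Q)$ has skeletal filtration $d-2n$, distinct translates occupy distinct filtration strata and cannot cancel; injectivity is then checked on the associated graded, where it reduces to the restricted map. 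The inputs making this run are that assembly is an (integral) equivalence for free groups and that closed ideals of Banach algebras give homotopy fibration sequences of $K$-theory spectra, which lets the resolution propagate through the target. If you want to salvage your route, the functionals you seek would have to be adapted to precisely such a filtration; as it stands, the filtration argument is both what the paper does and the missing ingredient in your sketch.
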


\begin{proof} In [O2, Cor. 4.5] we proved this result with $C^*(\pi)$ in place of $\ell^1(\pi)$. The proof for the $\ell^1$-group algebra follows by exactly the same argument leading up to [O2, Thm. 4.4] once one makes the following observations:
\begin{itemize}
\item If $F$ is a free group, the assembly map $BF_+\wedge \underline{\underline{K}}^t(\mathbb C)\to \underline{\underline{K}}^t(\ell^1(F))$ is a rational homotopy equivalence (in fact, it is an integral equivalence).
\item Given a Banach algebra $A$ and a closed ideal $I\subseteq A$, the short-exact sequence $I\inj A\surj A/I$ induces a homotopy fibration sequence of spectra: 
\[
\underline{\underline{K}}^t(I)\to \underline{\underline{K}}^t(A)\to \underline{\underline{K}}^t(A/I)
\]
\item An augmented free simplicial resolution $\Gamma\hskip-.03in .^+$ of $\Gamma_{-1} = \pi$ induces a filtration on the homotopy groups of both $B\pi_+\wedge \underline{\underline{K}}^t(\mathbb C)$ and $\underline{\underline{K}}^t(\ell^1(\pi))$, with respect to which the assembly map is filtration-preserving. The filtration of $KU_*(B\pi)$ rationally corresponds to the skeletal filtration, while the filtration of $K_*^t(\ell^1(\pi))$ is as defined in [O2, (1.5)].
\end{itemize}
\end{proof}
\newpage


\section{Simplicial rapid decay algebras}
\vskip.4in

\subsection{$\ell^1$-rapid decay algebras}
\vskip.3in

 Let $\ell^1(\pi)$ be the convolution algebra of $\ell^1$-functions on $\pi$, with standard $\ell^1$-norm. 
For a real number $s> 0$, $H^{1,s}_L(\pi)$ is the Banach algebra of functions
$f: \pi \to \mathbb C $ satisfying the inequality
$\nu_{1,s}(f) < \infty$, where

\begin{equation}\label{eqn:nus1}
\nu_{1,s}(f) = \sum\limits_{h\in\pi}\, |f(h)|(1+L(h))^{s}
\end{equation}

For each $s$ and $L$, $ H^{1,s}_L(\pi)$ is the completion of the group algebra
$\mathbb C[\pi]$ with respect to the semi-norm $\nu_{1,s}$. Let

\begin{equation}\label{eqn:l1inf}
H^{1,\infty}_L(\pi) := \underset s{\bigcap}\, H^{1,s}_L(\pi)
\end{equation}

The collection of semi-norms $\{\nu_{1,n}\}_{n\in\mathbb N}$ give
$H^{1,\infty}_L(\pi)$ the structure of a Fr\'echet space. Unlike the $\ell^2$-case discussed below,
the \underbar{$\ell^1$-rapid decay algebra} $H^{1,\infty}_L(\pi)$ is always a subalgebra of $\ell^1(\pi)$  which is closed in $\ell^1(\pi)$ under holomorphic functional calculus [PJ1]. Moreover, it is functorial with respect to group homomorphisms $\phi:(\pi,L)\to (\pi',L')$ which are polynomially bounded with respect to word-length (i.e., there is a polynomial $p$ for which $L'(\phi(g))\le p(L(g))$ for all $g\in  \pi$). 
\vskip.2in

Let $(f.p.groups)$ be the category of finitely-presented groups defined above. If $G$ is finitely presented, any two word-length functions on $G$ are linearly equivalent, hence linearly equivalent to the standard word-length function $L^G_{st}$ on $G$. For this reason, we will assume for the remainder of the paper that the word-length function on a finitely presented group is always $L_{st}$. Observing that any homomorphism from a finitely-generated group $G$ is polynomially (in fact, linearly) bounded with respect to $L^G_{st}$, we conclude

\begin{proposition}\label{prop:l1-functor} The association $G\mapsto H^{1,\infty}_L(G)$ defines a functor
\[
H^{1,\infty}_L(_-): (f.p.groups)\to (Fr\acute{e}chet\ algebras)
\]
where $(Fr\acute{e}chet\ algebras)$ denotes the category of Fr\'echet algebras and continuous Fr\'echet algebras homomorphisms.
\end{proposition}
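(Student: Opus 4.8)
The plan is to verify the three defining properties of a functor in turn, noting that the object-level claim is already in hand so that the real work concerns the behaviour on morphisms. For each object $G = \langle R,W\rangle$, the discussion preceding the statement shows that $H^{1,\infty}_L(G) = \bigcap_s H^{1,s}_L(G)$ is a Fr\'echet \emph{algebra}: it is a Fr\'echet space under the countable family of seminorms $\{\nu_{1,n}\}_{n\in\mathbb N}$, and by [PJ1] it is a subalgebra of $\ell^1(G)$ closed under holomorphic functional calculus. Thus the only substantive points are (i) to produce, for each group homomorphism $\phi:G\to G'$, an induced continuous Fr\'echet algebra homomorphism $H^{1,\infty}_L(\phi)$, and (ii) to check that this assignment preserves identities and composites.

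For (i) the crucial input is the linear boundedness of $\phi$ with respect to the standard word-lengths. First I would fix the finite generating set $R = \{g_1,\dots,g_k\}$ of $G$ and set $C := \max_i L^{G'}_{st}(\phi(g_i))$, a finite quantity since $G'$ is finitely generated. Writing an arbitrary $g\in G$ as a reduced word of length $n = L^G_{st}(g)$ in the $g_i^{\pm 1}$ and applying $\phi$ exhibits $\phi(g)$ as a product of $n$ elements each of $L^{G'}_{st}$-length at most $C$, whence
\[
L^{G'}_{st}(\phi(g)) \le C\, L^G_{st}(g)\qquad\text{for all }g\in G.
\]
This is precisely the observation quoted before the statement, and it places $\phi$ among the polynomially (indeed linearly) bounded homomorphisms for which $H^{1,\infty}_L(\,\cdot\,)$ is already functorial. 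Concretely, the pushforward $\phi_*$, given on finitely supported functions by $(\phi_* f)(h') = \sum_{\phi(h)=h'} f(h)$, satisfies for every seminorm
\[
\nu_{1,n}(\phi_* f) \le \sum_{h\in G} |f(h)|\,\bigl(1+L^{G'}_{st}(\phi(h))\bigr)^n \le \bigl(\max(1,C)\bigr)^n\,\nu_{1,n}(f),
\]
using the triangle inequality for the pushforward together with the linear bound above and the elementary estimate $1+C\,t \le \max(1,C)(1+t)$. Hence $\phi_*$ carries each $H^{1,s}_L(G)$ continuously into $H^{1,s}_L(G')$, and therefore maps $H^{1,\infty}_L(G)$ continuously into $H^{1,\infty}_L(G')$; since each generating seminorm is so dominated, $\phi_* =: H^{1,\infty}_L(\phi)$ is continuous for the Fr\'echet topologies. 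That $\phi_*$ is multiplicative is immediate on the dense subalgebra $\mathbb C[G]$, because $\phi$ is a group homomorphism, and extends to the completion by continuity.

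For (ii), the identity homomorphism visibly induces the identity pushforward. Given a second homomorphism $\psi:G'\to G''$, the composite $\psi\circ\phi$ is again linearly bounded (compose the two linear estimates), and the identity $(\psi\circ\phi)_* = \psi_*\circ\phi_*$ holds on $\mathbb C[G]$, hence, by density together with the continuity of all three maps, on all of $H^{1,\infty}_L(G)$. This gives the asserted functor. I expect no serious obstacle here: the whole argument hinges on the linear bound of the middle paragraph, and the only place finite generation of $G$ is genuinely used is in forcing $C$ to be finite — for an infinitely generated source the images of the generators could have unbounded word-length and no single polynomial estimate would hold, which is exactly why the category is restricted to finitely presented (a fortiori finitely generated) groups.
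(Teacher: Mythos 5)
Your proof is correct and follows essentially the same route as the paper, which simply observes that any homomorphism out of a finitely generated group is linearly bounded with respect to $L_{st}$ and then invokes the functoriality of $H^{1,\infty}_L(\,\cdot\,)$ for polynomially bounded maps established in [PJ1]. You have merely written out the seminorm estimate $\nu_{1,n}(\phi_* f)\le \max(1,C)^n\,\nu_{1,n}(f)$ and the density argument that the paper leaves implicit.
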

\vskip.5in


\subsection{The Connes-Moscovici algebra of a discrete group}
\vskip.3in

We recall the construction of [CM]. As before, $\pi$ will
denote a countable discrete group equipped with a word-length function $L$. 
By convention, $\ell^2(\pi)$ is the Hilbert space of complex-valued
$\ell^2$-functions on $\pi$, with ${\cal L}(\ell^2(\pi))$ the space of
bounded operators on $\ell^2(\pi)$. On $\ell^2(\pi)$ one has
$D_L : \ell^2(\pi)\to \ell^2(\pi)$ given on basis elements by
$D_L(\delta_g) = L(g)\delta_g$. This operator in turn defines an unbounded operator
$\partial_L : {\cal L}(\ell^2(\pi))\to {\cal L}(\ell^2(\pi))$ given by
\[
\partial_L(M) = ad(D_L)(M) = D_L\circ M - M\circ D_L
\]

Following [CM], we define the Connes-Moscovici algebra associated with the
pair $(\pi,L)$ as

\begin{definition} $H_L^{CM}(\pi) = \{a\in C^*_r(\pi)\,|\, \partial_L^k(a)\in {\cal L}(\ell^2(\pi))\,\text{ for all } k\}$.
\end{definition}

Note that $H_L^{CM}(\pi) = \displaystyle\left(\bigcap_{i=1}^\infty Domain(\partial_L^k)\right)\cap C^*_r(\pi)$. Since
$$
\partial_L^k(ab) = \sum_{i=1}^k \binom ki \partial_L^i(a)\partial_L^{k-i}(b)
$$
it follows $H_L^{CM}(\pi)$ is an algebra. It is also easy to see that
$H_L^{CM}(\pi)$ contains the group algebra.

\begin{proposition}\label{prop:cm} For all word-length functions
$L$ and groups $\pi$, $H_L^{CM}(\pi)$ is a dense subalgebra of
$C^*_r(\pi)$ closed under holomorphic functional calculus.
\end{proposition}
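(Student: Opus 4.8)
The plan is to verify three things in turn: that $H_L^{CM}(\pi)$ is a dense Fr\'echet subalgebra of $C^*_r(\pi)$, that it is spectrally invariant inside $C^*_r(\pi)$, and then to invoke the standard equivalence between spectral invariance and closure under holomorphic functional calculus for dense Fr\'echet subalgebras. Density is immediate: the reduced $C^*$-algebra is by definition the operator-norm completion of $\mathbb C[\pi]$, and one checks directly that $\mathbb C[\pi]\subseteq H_L^{CM}(\pi)$. Indeed, for $g\in\pi$ the operator $\partial_L^k(\lambda(g))$ acts on the basis by $\delta_h\mapsto (L(gh)-L(h))^k\delta_{gh}$, and the triangle inequality for word-length gives $|L(gh)-L(h)|\le L(g)$, so each $\partial_L^k(\lambda(g))$ is bounded of norm $\le L(g)^k$. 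The seminorms $q_k(a)=\|\partial_L^k(a)\|$ topologize $H_L^{CM}(\pi)$ as a Fr\'echet space (completeness coming from the closedness of $\partial_L$), and the binomial identity for $\partial_L^k(ab)$ recorded just above the statement shows that multiplication is jointly continuous, so $H_L^{CM}(\pi)$ is a Fr\'echet algebra.

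The heart of the argument is spectral invariance: if $a\in H_L^{CM}(\pi)$ is invertible in $C^*_r(\pi)$, then its inverse $b=a^{-1}$ again lies in $H_L^{CM}(\pi)$. Since $C^*_r(\pi)$ is a unital $C^*$-subalgebra of $\mathcal{L}(\ell^2(\pi))$ it is spectrally invariant there, so $b$ is also the inverse of $a$ computed in $\mathcal{L}(\ell^2(\pi))$. Regarding $\partial_L=ad(D_L)$ as a closed (unbounded) derivation on $\mathcal{L}(\ell^2(\pi))$, I would use the key lemma that for such a derivation $\delta$, an element $a\in\mathrm{Dom}(\delta)$ with bounded inverse satisfies $a^{-1}\in\mathrm{Dom}(\delta)$ with
\[
\partial_L(b)=-\,b\,\partial_L(a)\,b .
\]
Once the base case $b\in\mathrm{Dom}(\partial_L)$ is secured, one bootstraps: the right-hand side is a product of three elements of $\bigcap_k\mathrm{Dom}(\partial_L^k)$ (namely $b$, $\partial_L(a)$, and $b$), so repeated application of the Leibniz rule expresses each $\partial_L^{\,k}(b)$ as a finite sum of products of the bounded operators $\partial_L^{\,i}(b)$ (with $i<k$) and $\partial_L^{\,j}(a)$. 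An induction on $k$ then shows $\partial_L^{\,k}(b)$ is bounded for every $k$; together with $b\in C^*_r(\pi)$ this gives $b\in H_L^{CM}(\pi)$.

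With spectral invariance in hand, closure under the holomorphic functional calculus follows from the standard fact that a spectrally invariant dense Fr\'echet subalgebra of a Banach algebra is automatically closed under holomorphic functional calculus. The step I expect to be the genuine obstacle is the base case of the bootstrap, namely placing $b=a^{-1}$ in $\mathrm{Dom}(\partial_L)$ rigorously, since $\partial_L$ is unbounded and the identity $\partial_L(b)=-b\,\partial_L(a)\,b$ is only formal a priori. I would make this precise using the one-parameter group $\alpha_t=\mathrm{Ad}(e^{itD_L})$ of automorphisms of $\mathcal{L}(\ell^2(\pi))$ whose generator is $\partial_L$: since $\alpha_t$ is an automorphism of the \emph{ambient} algebra one has $\alpha_t(b)=\alpha_t(a)^{-1}$, and smoothness of inversion on the invertibles of a Banach algebra propagates the differentiability of $t\mapsto\alpha_t(a)$ to $t\mapsto\alpha_t(b)$, whence $b\in\mathrm{Dom}(\partial_L)$ with the stated value. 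Care is required precisely because $\alpha_t$ need not be norm-continuous, so the required differentiability must be tracked through the boundedness of the iterated commutators defining $\mathrm{Dom}(\partial_L^{\,k})$ rather than through a naive norm limit; this reconciliation of the automorphism-group description with the commutator description of the domain is the one point where real work is involved.
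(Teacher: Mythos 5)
The paper does not actually prove this proposition; it is stated with a one-line citation to [RJ] (Ji's paper on smooth dense subalgebras), where the argument goes back to Connes--Moscovici. So there is no in-paper proof to compare against, and your proposal must be judged on its own. Your overall architecture --- density via $\mathbb C[\pi]\subseteq H^{CM}_L(\pi)$ (the estimate $\|\partial_L^k(\lambda(g))\|\le L(g)^k$ is correct), spectral invariance via inverse-closedness of $\bigcap_k\mathrm{Dom}(\partial_L^k)$, then the standard passage from spectral invariance of a dense Fr\'echet subalgebra with submultiplicative seminorms to closure under holomorphic functional calculus --- is exactly the standard route and is sound. The Leibniz bootstrap from the base case is also fine once phrased as an induction on $k$ (assume $b\in\mathrm{Dom}(\partial_L^k)$; since $\mathrm{Dom}(\partial_L^k)$ is an algebra and $\partial_L(a)\in\mathrm{Dom}(\partial_L^k)$, the identity $\partial_L(b)=-b\,\partial_L(a)\,b$ gives $b\in\mathrm{Dom}(\partial_L^{k+1})$).

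The genuine gap is the one you flag yourself and then do not close: placing $b=a^{-1}$ in $\mathrm{Dom}(\partial_L)$. The route through $\alpha_t=\mathrm{Ad}(e^{itD_L})$ is the wrong tool here, not merely a delicate one: membership of $a$ in $\mathrm{Dom}(\partial_L)$ in the commutator sense only makes $t\mapsto\alpha_t(a)$ norm-Lipschitz, with the derivative existing in the strong operator topology; it does not give norm-differentiability, so there is no norm-smoothness of $t\mapsto\alpha_t(a)$ for the smoothness of inversion to propagate. The standard and much shorter fix avoids the automorphism group entirely and uses only the closedness of each $\partial_L^k$ (closedness is immediate from testing the commutator form against finitely supported vectors). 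Choose $c\in\mathbb C[\pi]$ with $\|1-ca\|<1$ in $C^*_r(\pi)$, set $x=1-ca\in\bigcap_k\mathrm{Dom}(\partial_L^k)$, and expand $(ca)^{-1}=\sum_{n\ge 0}x^n$. The iterated Leibniz rule gives $\|\partial_L^k(x^n)\|\le P_k(n)\,C_k\,\|x\|^{n-k}$ with $P_k$ a polynomial of degree $k$, so the series converges in every graph norm; closedness of $\partial_L^k$ then puts $(ca)^{-1}$, and hence $a^{-1}=(ca)^{-1}c$, in $\bigcap_k\mathrm{Dom}(\partial_L^k)$. This handles all $k$ at once and also dispenses with the identity $\partial_L(b)=-b\,\partial_L(a)\,b$ as a separate lemma. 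With that substitution your proof is complete and agrees with the argument the paper delegates to [RJ].
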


(A detailed proof of this Proposition is given in [RJ]).
\vskip.2in

Define semi-norms on $\mathbb C[\pi]$ by

\begin{equation}\label{eqn:etam}
\eta_m(a) = \sum_{i=0}^m \binom mi \|\partial_L^i(a)\|
\end{equation}

where $\|_-\|$ is the norm on $C^*_r(\pi)$ and $\partial_L^0 = Id$.
Using standard properties of $\|_-\|$ we have

\begin{gather}\label{eqn:etaprod}
\eta_m(ab)\le \eta_m(a)\eta_m(b)\\
\eta_m(a)\le \eta_n(a)\quad\text{when}\, m\le n
\end{gather}

It is observed in [RJ] that $H_L^{CM}(\pi)$ is complete in the
semi-norms $\{\eta_m\}_{m\ge 0}$. As $\mathbb C[\pi]$ is dense in
$H_L^{CM}(\pi)$ in the topology induced by these semi-norms, we may
alternatively describe $H_L^{CM}(\pi)$ as the Fr\'echet algebra
completion of $\mathbb C[\pi]$ with respect to $\{\eta_m\}_{m\ge 0}$.
\vskip.2in

We consider a related completion of the group algebra $\mathbb C[\pi]$. For
each real number $s>0$, $H^{2,s}_L(\pi)$ is the Hilbert space of functions
$f: \pi \to \mathbb C $ satisfying the inequality
$\nu_{2,s}(f) < \infty$, where

\begin{equation}\label{eqn:nus}
\nu_{2,s}(f) = (<f,f>_{2,s,L})^{1/2}\,,\qquad
<f,g>_{2,s,L}
= \sum\limits_{h\in\pi}\, f(h)\overline{g(h)}(1+L(h))^{2s}
\end{equation}

For each $s$ and $L$ , $ H^{2,s}_L(\pi)$ is the completion of the group algebra
$\mathbb C[\pi]$ with respect to the semi-norm $\nu_{2,s}$. Let

\begin{equation}\label{eqn:l2inf}
H^{2,\infty}_L(\pi) := \underset s{\bigcap}\, H^{2,s}_L(\pi)
\end{equation}

The collection of semi-norms $\{\nu_{2,n}\}_{n\in\mathbb N}$ give
$H^{2,\infty}_L(\pi)$ the structure of a Fr\'echet space, as $H^{2,t}_L(\pi)\subset
H^{2,s}_L(\pi)$ for $0< s< t$. If there exist numbers $C,s$ such that

\begin{equation}\label{eqn:rd}
\|f\| \le C \nu_s(f)\qquad\text{for all }f\in \mathbb C[\pi]
\end{equation}

where $\|f\|$ denotes the reduced $C^*$-norm of $f$, then $H^{2,t}_L(\pi)$ is
a subspace of $C^*_r(\pi)$ for $t > s$ , and $H^{2,\infty}_L(\pi)$ is a Fr\'echet
subalgebra of $C^*_r( \pi)$ . This property was first shown to hold for
finitely-generated free groups by Haagarup [H], and later for
finitely-generated hyperbolic groups by Jolissaint [PJ1], [PJ2] and P. de la
Harpe [dH]. A group with word-length function $(\pi,L)$ satisfying
condition (\ref{eqn:rd}) is referred to as {\it{Rapid Decay}}, or RD. In general, $H^{2,\infty}_L(\pi)$
contains $H^{CM}_L(\pi)$ whether or not $\pi$ is RD [CM], and when
it is, the two are equal [RJ]. We will need a refined version of
this last result. Call a word-length function $L$ {\it{nice}} if
$L$ does not take any values in the open interval $(0,1)$.

\begin{lemma}\label{lemma:bound} Let $\pi$ be a discrete group with
nice word-length function $L$. Then for each integer $k\ge 0$ and
$\varphi\in\mathbb C[\pi]$
$$
\nu_{2,k}(\varphi)\le 2k\eta_{2k}(\varphi)
$$
If in addition there exists $s, C > 0$ for which (\ref{eqn:rd}) holds above
($\pi$ is RD), then
$$
\eta_k(\varphi)\le 2^kC\nu_{2,k+s}(\varphi)
$$
\end{lemma}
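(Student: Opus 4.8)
The strategy is to establish both inequalities by reducing everything to explicit computations on basis elements $\delta_g \in \mathbb{C}[\pi]$, where the relationship between the operator $\partial_L$ and multiplication by word-length $L$ becomes transparent. The key structural fact is that $\partial_L$, when applied to the convolution operator corresponding to $\delta_g$, scales by the \emph{difference} of word-lengths. Precisely, for $\varphi = \sum_g \varphi(g)\delta_g$, the operator $\partial_L = \mathrm{ad}(D_L)$ acts diagonally in the sense that $\partial_L^i(\varphi)$ corresponds to the function $g \mapsto \varphi(g)\,(\text{something involving } L)^i$. The first step is to make this precise: I would compute $\partial_L^i(\delta_g)$ acting on a basis vector $\delta_h \in \ell^2(\pi)$ and observe that $\partial_L(\delta_g)\delta_h = (L(gh) - L(h))\,\delta_g\delta_h$, so that $\partial_L^i(\delta_g)$ multiplies by $(L(gh)-L(h))^i$. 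This links the Connes--Moscovici derivative directly to the word-length weights appearing in $\nu_{2,k}$.

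\textbf{First inequality.} For the bound $\nu_{2,k}(\varphi) \le 2k\,\eta_{2k}(\varphi)$, the idea is to control the weight $(1+L(g))^{k}$ appearing in $\nu_{2,k}$ by powers of $\partial_L$. Taking $h = e$ (the identity), the action above gives a weight of $L(g)^i$, and since $L$ is \emph{nice} (no values in $(0,1)$), one has $L(g) \ge 1$ whenever $L(g) \ne 0$, so $(1+L(g))^k$ can be expanded and dominated by a sum $\sum_{i=0}^{2k}\binom{2k}{i}L(g)^i$ up to the combinatorial factor $2k$. Reading this off as an operator-norm estimate on $\partial_L^i(\varphi)$ and recognizing the resulting sum as $\eta_{2k}(\varphi)$ (after accounting for the relation between the $\ell^2$-weighted norm and the $C^*$-norm on the identity column) yields the claimed inequality. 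The factor of $2k$ should emerge from crudely bounding the number of surviving binomial terms.

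\textbf{Second inequality.} For $\eta_k(\varphi) \le 2^k C\,\nu_{2,k+s}(\varphi)$, I would run the estimate in the reverse direction, now using the RD hypothesis \eqref{eqn:rd}: the $C^*$-norm $\|\partial_L^i(\varphi)\|$ is dominated by $C\,\nu_{2,s}(\partial_L^i(\varphi))$. Since $\partial_L^i(\varphi)$ is (up to the $(L(gh)-L(h))^i$ factor) again a function on $\pi$, and since $|L(gh)-L(h)| \le L(g)$ by the triangle inequality for word-length, each application of $\partial_L$ costs at most one extra power of $L(g)$. Thus $\nu_{2,s}(\partial_L^i(\varphi))$ is controlled by $\nu_{2,s+i}(\varphi) \le \nu_{2,s+k}(\varphi)$ for $i \le k$. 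Summing $\eta_k(\varphi) = \sum_{i=0}^k \binom{k}{i}\|\partial_L^i(\varphi)\|$ and using $\sum_i \binom{k}{i} = 2^k$ gives the factor $2^k C$.

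\textbf{Main obstacle.} The delicate point is the passage between the operator norm on $\mathcal{L}(\ell^2(\pi))$ and the weighted $\ell^2$-norms $\nu_{2,s}$, since $\partial_L^i(\varphi)$ is a priori an operator, not obviously a convolution operator by an $\ell^2$-function. The cleanest route is to show that $\partial_L^i(\varphi)$ is in fact left convolution by the function $g \mapsto \varphi(g)\cdot(\text{weight})$, and then invoke the elementary fact that for a convolution operator $\|L_f\|_{\mathcal{L}(\ell^2)} \ge \nu_{2,0}(f)/\sqrt{|\mathrm{supp}|}$ on one side and $\|L_f\| \le C\nu_{2,s}(f)$ on the other (the latter being exactly the RD condition). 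Keeping the constants clean — extracting precisely $2k$ and $2^k C$ rather than weaker polynomial or exponential factors — is where the combinatorics of expanding $(1+L)^k$ and the binomial sums must be handled with a little care, but no genuinely new idea is needed beyond the identification of $\partial_L$ with word-length multiplication.
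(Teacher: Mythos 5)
Your outline follows the paper's proof almost step for step: expand $(1+L(g))^{2k}$ binomially, use niceness to replace $L(g)^i$ by $L(g)^{2i}$, identify $\sum_g|\varphi(g)|^2L(g)^{2i}$ with $\|\partial_L^i(\varphi)\delta_e\|_{\ell^2}^2\le\|\partial_L^i(\varphi)\|^2$, and absorb the binomial coefficients into $\eta_{2k}(\varphi)$ to get the factor $2k$; for the second inequality, the estimate $\|\partial_L^i(\varphi)\|\le C\nu_{2,i+s}(\varphi)$ (which the paper simply quotes from [RJ, Th.\ 1.3]) followed by $\nu_{2,i+s}\le\nu_{2,k+s}$ and $\sum_i\binom{k}{i}=2^k$. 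So the approach is the same and the outline is essentially right.

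Two points in your ``main obstacle'' paragraph would fail as literally written, though both are repairable. First, $\partial_L^i(\varphi)$ is \emph{not} left convolution by $g\mapsto\varphi(g)\cdot(\text{weight})^i$: your own formula $\partial_L(\varphi)\delta_h=\sum_g\varphi(g)(L(gh)-L(h))\delta_{gh}$ shows the multiplier depends on $h$, so the operator is not a convolution operator (already for $\pi$ free and $\varphi=\delta_a$ the sign of $L(ah)-L(h)$ depends on $h$). For the first inequality this is harmless --- you only need the single column $h=e$, where the weight is exactly $L(g)^i$, together with the trivial bound $\|T\delta_e\|_{\ell^2}\le\|T\|$. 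For the second inequality you need instead the standard Schur-type domination lemma: the matrix coefficients of $\partial_L^i(\varphi)$ are bounded in modulus by those of $\lambda(|\varphi|L^i)$ via $|L(gh)-L(h)|\le L(g)$, and an operator dominated coefficientwise by a convolution operator with nonnegative kernel has no larger norm; combined with RD this gives $\|\partial_L^i(\varphi)\|\le C\nu_{2,s}(|\varphi|L^i)\le C\nu_{2,i+s}(\varphi)$, which is exactly the content of the cited result. Second, the lower bound $\|L_f\|\ge\nu_{2,0}(f)/\sqrt{|\mathrm{supp}\,f|}$ you propose is both unnecessary and too weak: the $\sqrt{|\mathrm{supp}\,f|}$ factor depends on $\varphi$ and would destroy the uniform constant $2k$ in the statement. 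Use instead the exact identity $\|\lambda(f)\delta_e\|_{\ell^2}=\nu_{2,0}(f)$, i.e.\ $\nu_{2,0}(f)\le\|\lambda(f)\|$ with constant $1$, applied to the column $\partial_L^i(\varphi)\delta_e$.
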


\begin{proof} The first inequality follows from

\begin{gather*}
\nu_{2,k}(\varphi)^2
= \underset {g\in\pi}{\sum} |\varphi(g)|^2(1 + L(g))^{2k}\\
= \underset {g\in\pi}{\sum}
|\varphi(g)|^2\left(\sum_{i=0}^{2k}\binom{2k}{i}L(g)^i\right)\\
= \sum_{i=0}^{2k}\binom{2k}{i}\left(\underset {g\in\pi}{\sum} |\varphi(g)|^2
L(g)^i\right)\\
\le \sum_{i=0}^{2k}\binom{2k}{i}\left(\underset {g\in\pi}{\sum} |\varphi(g)|^2
L(g)^{2i}\right)\\
\le \sum_{i=0}^{2k}\binom{2k}{i} \|\partial^i_L(\varphi)\|^2_2\\
\le \sum_{i=0}^{2k}\eta_{2k}(\varphi)^2\\
= 2k\eta_{2k}(\varphi)^2
\end{gather*}

Now suppose $\pi$ is RD and that $s, C > 0$ have been chosen for which
(\ref{eqn:rd}) holds. Then as in the proof of [RJ, Th. 1.3] one has
\[
\|\partial^m_L(\varphi)\|\le C\nu_{2,m+s}(\varphi)
\]
and the second inequality above follows from

\begin{gather*}
\eta_k(\varphi)
= \sum_{i=0}^k \binom{k}{i} \|\partial_L^i(\varphi)\|\\
\le C\sum_{i=0}^k \binom{k}{i} \nu_{2,i+s}(\varphi)\\
\le C\sum_{i=0}^k \binom{k}{i} \nu_{2,k+s}(\varphi)\\
= 2^k C\nu_{2,k}(\varphi)
\end{gather*}

These explicit inequalities also prove the two semi-norm
topologies are the same when $\pi$ is RD.
\end{proof}
\vskip.2in

 Observe that for all $(\pi,L)$, there are inclusions
\[
\mathbb C[\pi]\hookrightarrow H^{1,\infty}_L(\pi) \hookrightarrow H^{CM}_L(\pi)
\hookrightarrow H^{2,\infty}_L(\pi)
\]
with the first two inclusions being algebra homomorphisms, and the last two being continuous Frech\'et space maps.
\vskip.5in


\subsection{Simplicializing the Connes-Moscovici construction}
\vskip.3in

The construction of both $H^{CM}_L(\pi)$ and $H^{2,\infty}_L(\pi)$ lacks
functoriality with respect to group surjections, even those that are
p-bounded. The following proposition provides the method for extending the
Connes-Moscovici construction to simplicial or augmented simplicial
groups. To properly state it, we note that if $\{\eta'_i\}$ is a countable collection
of semi-norms on $\mathbb C[\pi]$, then the completion of $\mathbb C[\pi]$ in
these semi-norms produces a Fr\'echet space. If each semi-norm is \underbar {submultiplicative}, that
is,
\[
\eta'_i(ab) \le \eta'_i(a)\eta'_i(b)\quad \forall a,b\in \mathbb C[\pi]
\]
then the completion is a Fr\'echet algebra. In particular, the semi-norms
$\eta_m$ satisfy this property by (\ref{eqn:etaprod}).

\begin{proposition}\label{prop:map} Suppose $f : \pi_1\to \pi_2$ is
a group homomorphism. Let $S_i$ be a countable collection of
submultiplicative semi-norms on $\mathbb C[\pi_i]$, $i = 1,2$. Suppose also
that for each $\eta\in S_2$, $f^*\eta := \eta\circ f\in S_1$. Then the group algebra
homomorphism induced by $f$ extends to a continuous homomorphism of
Fr\'echet algebras
\[
\hat f : {\cal H}_{S_1}(\pi_1)\to {\cal H}_{S_2}(\pi_2)
\]
where ${\cal H}_{S_i}(\pi_i)$ denotes the completion of $\mathbb C[\pi_i]$ with
respect to the collection of semi-norms $S_i$.
\end{proposition}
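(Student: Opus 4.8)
The plan is to reduce everything to a single semi-norm comparison and then invoke the universal property of completion. First I would record that the group homomorphism $f:\pi_1\to\pi_2$ extends $\mathbb{C}$-linearly to an algebra homomorphism $f_*:\mathbb{C}[\pi_1]\to\mathbb{C}[\pi_2]$, $\sum c_g\,g\mapsto \sum c_g\,f(g)$; this is the map we wish to extend to the Fréchet completions. Since $\mathbb{C}[\pi_i]$ is dense in $\mathcal{H}_{S_i}(\pi_i)$ by construction, and the topology on each $\mathcal{H}_{S_i}(\pi_i)$ is defined by the countable family $S_i$, it suffices to show that $f_*$ is continuous for the semi-norm topologies and then extend.

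The continuity is immediate from the hypothesis. For a linear map between Fréchet spaces whose topologies are generated by the families $S_1$ and $S_2$, continuity is equivalent to the statement that for each $\eta\in S_2$ there exist $\eta'\in S_1$ and a constant $C$ with $\eta(f_*(a))\le C\,\eta'(a)$ for all $a\in\mathbb{C}[\pi_1]$. But by definition $\eta(f_*(a)) = (\eta\circ f)(a) = (f^*\eta)(a)$, and the hypothesis asserts precisely that $f^*\eta\in S_1$. Thus the required inequality holds with $\eta' = f^*\eta$ and $C=1$; indeed $f^*\eta$ is again submultiplicative since $\eta$ is and $f_*$ is multiplicative, so it is a legitimate member of the family $S_1$. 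Hence $f_*$ is (Lipschitz, and in particular uniformly) continuous.

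It remains to extend and to verify that the extension is multiplicative. A uniformly continuous map from a dense subspace of a metric space into a complete metric space extends uniquely to a continuous map on the whole space; applying this to $f_*$ with domain $\mathbb{C}[\pi_1]\subset\mathcal{H}_{S_1}(\pi_1)$ and target the complete space $\mathcal{H}_{S_2}(\pi_2)$ produces the desired continuous map $\hat f$. For multiplicativity, I would use that multiplication in a Fréchet algebra defined by submultiplicative semi-norms is jointly continuous: the estimate $\eta(ab-a'b')\le\eta(a-a')\,\eta(b)+\eta(a')\,\eta(b-b')$ shows this for each $\eta\in S_i$. Consequently the identity $\hat f(ab)=\hat f(a)\hat f(b)$, which holds on the dense subalgebra $\mathbb{C}[\pi_1]$ because $f_*$ is a homomorphism, persists to all of $\mathcal{H}_{S_1}(\pi_1)$ by approximating arbitrary elements by sequences from $\mathbb{C}[\pi_1]$ and passing to the limit on both sides.

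The argument is essentially formal, so there is no serious analytic obstacle; the only points requiring care are bookkeeping ones. The main thing to get right is the identification $\eta\circ f = f^*\eta$ as a semi-norm on $\mathbb{C}[\pi_1]$ together with its membership in $S_1$, since this is what converts the hypothesis into the continuity estimate with no loss. The secondary point is the appeal to joint continuity of multiplication to push multiplicativity to the completion, which relies on submultiplicativity of the semi-norms—exactly the condition singled out in the discussion preceding the statement.
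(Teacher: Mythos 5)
Your proposal is correct and follows essentially the same route as the paper: the key observation in both is that $\eta(f_*(a)) = (f^*\eta)(a)$ with $f^*\eta\in S_1$ by hypothesis, which gives continuity of $f_*$ for the semi-norm topologies and hence an extension to the completions (the paper phrases this via Cauchy sequences rather than the universal property, but the content is identical). Your explicit verification of multiplicativity of the extension via joint continuity of multiplication is a detail the paper leaves implicit, and your remark that $f^*\eta$ is submultiplicative is redundant since membership in $S_1$ is assumed outright.
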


\begin{proof} As noted, the completions are Fr\'echet algebras. So it will
suffice to prove that $f$ extends over the completions. Let $x\in {\cal
H}_{S_1}(\pi_1)$ be represented by the Cauchy sequence $\{x_i\}$ and
$\eta_2$ be any semi-norm in $S_2$. Then $\{x_i\}$ converges in the
semi-norm $\eta_2\circ f$, which is exactly saying that $\{f(x_i)\}$
converges in the semi-norm $\eta_2$. If $x$ is represented by another
sequence $\{x'_i\}$, then $\{x_i - x'_i\}$ converges to zero in the
semi-norm $\eta_2\circ f$, whence $\{f(x_i - x'_i) = f(x_i) - f(x'_i)\}$
converges to zero in the semi-norm $\eta_2$.
\end{proof}

In particular, if we start with a collection of semi-norms $T_1$ on $\mathbb
C[\pi_1]$, we can enlarge $T_1$ to a set $S_1$ which contains all
semi-norms of the form $f^*(\eta) = \eta\circ f$, $\eta\in S_2$. Then the
above proposition applies, and $f$ extends.
\vskip.2in

An \underbar{augmented simplicial group with word-length}
$(\Gamma\hskip-.03in .^+,L\hskip-.015in .^+)$ consists of an augmented simplicial group $\Gamma\hskip-.03in .^+$, with $L_n$ a word-length function on $\Gamma_n$ for all $n\ge -1$. The augmented simplicial group is \underbar{p-bounded} if all face and degeneracy maps, including the augmentation map $\varepsilon:\Gamma_0\twoheadrightarrow \Gamma_{-1}$, are polynomially bounded with respect to the word-length functions $\{L_n\}_{n\ge -1}$. Let $\Gamma(\varepsilon)_0 := \ker(\varepsilon)$, $\Gamma(\varepsilon)_n := \ker(\varepsilon_n = \varepsilon\circ\partial_0^{(n)}:\Gamma_n\to \Gamma_{-1})$. Then $\Gamma(\varepsilon)\hskip-.015in .$ is a simplicial subgroup of simplicial group $\Gamma\hskip-.03in . = \{\Gamma_n\}_{n\ge 0}$, and $\Gamma\hskip-.03in .^+$ is a resolution if $\pi_*(\Gamma(\varepsilon)\hskip-.015in .) = 0,*\ge 0$. Finally, $(\Gamma\hskip-.03in .^+,L\hskip-.015in .^+)$ is a \underbar{type $P$ resolution} if i) $(\Gamma_n,L_n)$ is a countably generated free group with $\mathbb N$-valued word-length metric $L_n$ induced by a proper function on the set of generators for $\Gamma_n$ for all $n\ge 0$, and ii) $\Gamma(\varepsilon)\hskip-.015in .$, viewed as a simplicial set, admits a simplicial contraction $s\hskip-.015in .' = \{s'_{n+1}:\Gamma(\varepsilon)_n\to \Gamma(\varepsilon)_{n+1}\}_{n\ge 0}$ which is polynomially bounded in each degree. Every countable discrete group $\pi$ admits a type $P$ resolution; moreover any p-bounded simplicial group $(\Gamma\hskip-.03in .,L\hskip-.015in .)$ includes as a simplicial subgroup of a (larger) type $P$ resolution $(\widetilde{\Gamma\hskip-.03in .}^+,\widetilde{L\hskip-.015in .}^+)$ where $\widetilde{\Gamma}_{-1} = \pi_0(\Gamma\hskip-.03in .)$ [Appendix, O1]. In fact, starting with $\pi$, the resolution can always be constructed so that the face and degeneracy maps, the augmentation $\varepsilon$ and the simplicial contraction $s\hskip-.015in .'$ are all linearly bounded.
\vskip.2in

Any augmented simplicial group $\Gamma\hskip-.03in .^+$ may be viewed as
a covariant functor $\Gamma\hskip-.03in .^+ : (\Delta^+)^{op}\to (gps)$, where $\Delta^+$
denotes the augmented simplicial category. For each $m,n\ge -1$, let
$S_{m,n}$ denote the image under $\Gamma\hskip-.03in .^+$ of the morphism set
$Hom_{(\Delta^+)^{op}}([m],[n])$; this is the set of all
homomorphisms from $\Gamma_m$ to $\Gamma_n$ which occur as an iterated
composition of face and degeneracy maps. For each $n\ge -1$, let
$\{\eta_{n,j}\}_{j\ge 0}$ resp. $\{\nu_{2,n,j}\}_{j\ge 0}$ denote the
collection of semi-norms associated with the group with word-length
$(\Gamma_n,L_n)$ as defined in (\ref{eqn:etam}) resp. (\ref{eqn:nus}). Finally we set

\begin{equation}\label{eqn:set}
S_m = \coprod_{n\ge -1}\coprod_{\lambda\in
S_{m,n}}\{\lambda^*(\eta_{n,j})\}_{j\ge 0}\qquad m\ge -1
\end{equation}

For each $m$, $S_m$ is a collection of submultiplicative semi-norms on
$\mathbb C[\Gamma_m]$.

\begin{definition}\label{def:cmsimp} For all $n\ge -1$, $H^{CM}_{L\hskip-.015in .}(\Gamma\hskip-.03in .^+)_n := {\cal
H}_{S_n}(\Gamma_n)$, the completion of $\mathbb C[\Gamma_n]$ with respect to the
collection of semi-norms $S_n$.
\end{definition}

We will write $\{[n]\mapsto H^{CM}_{L\hskip-.015in .}(\Gamma\hskip-.03in .^+)_n\}_{n\ge -1}$ simply as
$H^{CM}_{L\hskip-.015in .}(\Gamma\hskip-.03in .^+)$. 
\vskip.2in

Let $(c.\, complexes)$ denote the category of connected chain complexes over $\mathbb C$, with morphisms consisting of degree $0$ chain maps. The term \underbar{augmented simplicial chain complex} will refer to a covariant functor $F:(\Delta^+)^{op}\to (c.\, complexes)$, which we will write as $(F\hskip-.03in .^+)_* = \{[n]\mapsto (F_n)_*\}_{n\ge -1}$. For each $n\ge -1$, $(F_n)_* = ((F_n)_k,d_{n,k})_{k\ge 0}$ is a connected chain complex, with face and degeneracy maps corresponding to morphisms of complexes. We say that $(F\hskip-.03in .^+)_*$ is \underbar{of resolution type} if the augmented simplicial abelian group $\{[n]\mapsto (F_n)_k\}_{n\ge -1}$ is a resolution for each $k\ge 0$. Let $F_{**}$ denote the bicomplex associated to the simplicial complex $(F\hskip-.03in .)_* := \{[n]\mapsto (F_n)_*\}_{n\ge 0}$, and $Tot(F_{**})$ its total complex. Then the augmentation map $\varepsilon_*:(F\hskip-.03in .)_*\twoheadrightarrow (F_{-1})_*$ induces a chain map $Tot(F_{**})\twoheadrightarrow (F_{-1})_*$ which is a quasi-isomorphism (homology isomorphism) when $(F\hskip-.03in .^+)_*$ is of resolution type. Dually, a coaugmented cosimplicial (connected) cochain complex is of resolution type if each of the cosimplicial abelian groups $\{[n]\mapsto (F^n)^k\}_{n\ge -1}$ is a resolution for each $k\ge 0$. In all cases considered below, the dual situation arises via passage to either linear or continuous duals.
\vskip.2in

In conjunction with the procedure indicated in Definition \ref{def:cmsimp}, type $P$ resolutions of a discrete group $\pi$ can be used to construct continuous augmented simplicial resolutions of $H^{CM}_L(\pi)$ in the category of Fr\'echet algebras.

\begin{theorem}\label{thm:CMres} $H^{CM}_{L\hskip-.015in .}(\Gamma\hskip-.03in .^+)$ is an augmented
simplicial Fr\'echet algebra which in degree $-1$ is the Connes-Moscovici
algebra $H^{CM}_{L_{-1}}(\Gamma_{-1})$. When $(\Gamma\hskip-.03in .^+,L\hskip-.015in .^+)$ is a type $P$
resolution, the augmented simplicial complexes
\begin{gather*}
\{[n]\mapsto CH_*^t(H^{CM}_{L\hskip-.015in .}(\Gamma\hskip-.03in .^+)_n)\}_{n\ge -1}\\
\{[n]\mapsto CC_*^t(H^{CM}_{L\hskip-.015in .}(\Gamma\hskip-.03in .^+)_n)\}_{n\ge -1}
\end{gather*}

are of resolution type, as are the cosimplicial cocomplexes
\begin{gather*}
\{[n]\mapsto CH^*_t(H^{CM}_{L\hskip-.015in .}(\Gamma\hskip-.03in .^+)_n)\}_{n\ge -1}\\
\{[n]\mapsto CC^*_t(H^{CM}_{L\hskip-.015in .}(\Gamma\hskip-.03in .^+)_n)\}_{n\ge -1}
\end{gather*}
\end{theorem}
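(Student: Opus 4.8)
The plan is to treat the two assertions separately, deducing the simplicial-algebra structure from Proposition~\ref{prop:map} and the resolution-type statements from an explicit, polynomially bounded contracting homotopy. Throughout write $A_n := {\cal H}_{S_n}(\Gamma_n)$ for the Fr\'echet algebra of Definition~\ref{def:cmsimp}. For the first assertion, each morphism $\phi:[m]\to[n]$ of $(\Delta^+)^{op}$ produces a group homomorphism $\lambda=\Gamma\hskip-.03in.^+(\phi)\in S_{m,n}$. Every semi-norm in $S_n$ has the form $\mu^*(\eta_{k,j})$ with $\mu\in S_{n,k}$, and since $\lambda^*\mu^*(\eta_{k,j})=(\mu\circ\lambda)^*(\eta_{k,j})$ with $\mu\circ\lambda\in S_{m,k}$ (a composite of face and degeneracy maps is again one), we get $\lambda^*S_n\subseteq S_m$. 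Proposition~\ref{prop:map} then extends $\lambda$ to a continuous Fr\'echet-algebra map $A_m\to A_n$, and functoriality of the completion makes $\{[n]\mapsto A_n\}$ an augmented simplicial Fr\'echet algebra. For the degree $-1$ identification, note that $[-1]$ is initial in $\Delta^+$, so $S_{-1,k}=Hom_{(\Delta^+)^{op}}([-1],[k])=Hom_{\Delta^+}([k],[-1])$ is empty for $k\ge 0$ and is $\{\mathrm{id}\}$ for $k=-1$; hence $S_{-1}=\{\eta_{-1,j}\}_{j\ge 0}$ is exactly the Connes--Moscovici family on $\mathbb C[\Gamma_{-1}]$, giving $A_{-1}=H^{CM}_{L_{-1}}(\Gamma_{-1})$.

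For the resolution-type statements, observe that being of resolution type concerns, for each fixed internal degree $k$, only the augmented simplicial abelian group $\{[n]\mapsto (F_n)_k\}$. In Hochschild degree $k$ this is the completed tensor power $A_n^{\hat\otimes(k+1)}$, which is the completion of $\mathbb C[\Gamma_n^{\times(k+1)}]=\mathbb C[(\Gamma\hskip-.03in.^{\times(k+1)})_n]$ for the projective tensor semi-norms built from $S_n$; each cyclic chain group $CC^t_k(A_n)$ is a finite direct sum of such tensor powers, on which the external simplicial maps act diagonally and commute with the internal $b,B$ operators. It therefore suffices to prove that for every $r\ge 1$ the augmented simplicial Fr\'echet space $\{[n]\mapsto A_n^{\hat\otimes r}\}$ is a resolution of $A_{-1}^{\hat\otimes r}$. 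Algebraically this already holds: since $\pi_*(\Gamma(\varepsilon)\hskip-.015in.)=0$, the augmentation $\Gamma\hskip-.03in.\to\Gamma_{-1}$ is a weak equivalence, hence so is the $r$-fold product $\Gamma\hskip-.03in.^{\times r}\to\Gamma_{-1}^{\times r}$, and applying the free functor $\mathbb C[\,\cdot\,]$ yields exactness of the augmented complex.

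The real content, and the step I expect to be the main obstacle, is to realize this acyclicity by a homotopy that survives completion, since completion is not exact in general. Here I would build an explicit contraction from the given polynomially bounded simplicial contraction $s\hskip-.015in.'$ of $\Gamma(\varepsilon)\hskip-.015in.$: combined with the simplicial operators of $\Gamma\hskip-.03in.^+$ and a polynomially bounded set-theoretic section of the augmentation, $s\hskip-.015in.'$ produces an extra degeneracy on the augmented product $\Gamma\hskip-.03in.^{\times r}$ and hence a contracting homotopy $h_n:\mathbb C[\Gamma_n^{\times r}]\to\mathbb C[\Gamma_{n+1}^{\times r}]$ that is a finite $\mathbb C$-linear combination of maps induced by polynomially bounded set-maps. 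Because each building block is polynomially bounded in word-length, the pullback mechanism of Proposition~\ref{prop:map} carries the relevant semi-norms back into bounded multiples of those defining $A_n^{\hat\otimes r}$, so $h_n$ extends to a continuous map $A_n^{\hat\otimes r}\to A_{n+1}^{\hat\otimes r}$; the homotopy identity, valid on the dense algebraic subspace, then extends by continuity and exhibits the completed augmented complex as contractible, hence exact. The delicate point is precisely to check that the contraction built from $s\hskip-.015in.'$ is polynomially bounded \emph{as a map of tensor powers}, and bounded for the projective tensor semi-norms rather than merely factorwise.

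Finally, the cohomological statements follow formally. Applying the continuous dual $Hom^{cont}(\,\cdot\,,\mathbb C)$ to the continuous contracting homotopy $h$ yields a continuous contracting homotopy of the coaugmented cosimplicial cochain complexes $\{[n]\mapsto CH^*_t(A_n)\}$ and $\{[n]\mapsto CC^*_t(A_n)\}$, which are therefore of resolution type. This is exactly why the argument is organized around an explicit continuous homotopy rather than a bare homology isomorphism: acyclicity alone need not dualize in the topological setting, whereas a continuous contraction does.
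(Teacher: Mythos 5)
Your overall architecture matches the paper's: both arguments dispose of the simplicial-algebra structure and the degree $-1$ identification via Proposition \ref{prop:map}, and both reduce the resolution-type claims to producing a \emph{continuous} contracting homotopy built from the p-bounded simplicial contraction $s\hskip-.015in .'$ of $\Gamma(\varepsilon)\hskip-.015in .$ together with a section of the augmentation. One structural remark: the paper builds the contraction $\widetilde s_n$ on the augmented simplicial Fr\'echet space $H^{CM}_{L\hskip-.015in .}(\Gamma\hskip-.03in .^+)$ itself, as a simplicial extra degeneracy; since a continuous linear map on each factor induces a continuous map on completed projective tensor products, the induced diagonal contraction on $\{[n]\mapsto A_n^{\hat\otimes r}\}$ comes for free. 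So the point you single out as delicate (boundedness for the projective tensor semi-norms ``rather than merely factorwise'') is actually automatic once degreewise continuity is known; organizing the proof around the single space rather than its tensor powers makes this visible.

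The genuine gap is in the step you pass over: the claim that ``the pullback mechanism of Proposition \ref{prop:map} carries the relevant semi-norms back into bounded multiples of those defining $A_n$.'' Proposition \ref{prop:map} applies only to \emph{group homomorphisms}, because the semi-norms $\eta_{m,j}$ are submultiplicative operator-norm semi-norms on $C^*_r(\Gamma_m)$ and the pullback $f^*\eta$ of such a semi-norm along a homomorphism $f$ is again one of the defining semi-norms. The contraction $\widetilde s_{n+1}(g) = s'_{n+1}(g(s(n)(\varepsilon_n(g)))^{-1})s(n+1)(\varepsilon_n(g))$ is only a polynomially bounded \emph{set map}, extended linearly; a set map that is polynomially bounded in word length gives no control whatsoever on the operator-norm quantities $\|\partial_L^i(\varphi)\|$ entering $\eta_{m,j}$. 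This is exactly where the type $P$ hypothesis (each $\Gamma_n$ free on a countable generating set with proper word-length) is indispensable and where your proposal never uses freeness: one needs Haagerup's inequality, extended to countably generated free groups with proper word-length metric (Observation \ref{obs:inf}), together with the two-sided comparison of Lemma \ref{lemma:bound} between the Connes--Moscovici semi-norms $\eta_{m,j}$ and the weighted $\ell^2$-norms $\nu_{2,s}$, on which polynomially bounded set maps \emph{do} act continuously. The paper's estimate then proceeds by factoring $f\circ\widetilde s_n$ through iterated face/degeneracy maps and iterates of $\widetilde s$, converting $\eta$ to $\nu_2$, applying p-boundedness, and converting back. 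Without this RD mechanism the continuity of $\widetilde s_n$ on the completions --- the entire content of the resolution-type assertion --- is unproved. Your concluding remarks on dualizing the continuous contraction to obtain the cosimplicial statements are correct and agree with the paper.
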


\begin{proof} By Prop.\ \ref{prop:map} all the face and degeneracy maps of the
augmented simplicial group algebra $\mathbb C[\Gamma\hskip-.03in .^+]$ extend to continuous
algebra homomorphisms satisfying the same augmented simplicial identies.
This makes $H^{CM}_{L\hskip-.015in .}(\Gamma\hskip-.03in .^+)$ an augmented simplicial algebra. Moreover,
for each $m\ge -1$, the set $S_m$ in (\ref{eqn:set}) above is countable. Thus the
completion in each degree is with respect to a countable collection of
semi-norms, hence Fr\'echet. We note the condition that the
word-length function takes values in $\mathbb N$ guarantees it is nice in the sense of Lemma \ref{lemma:bound}.
\vskip.2in

Assume now that $(\Gamma\hskip-.03in .^+,L\hskip-.015in .^+)$ is a type $P$ resolution. In order to show that the augmented simplicial (co-)complexes listed above are of resolution type, it will suffice to show that the augmented simplicial Fr\'echet space $H^{CM}_{L\hskip-.015in .}(\Gamma\hskip-.03in .^+)$ (gotton by forgetting the multiplication) admits a continuous simplicial contraction 
\[
\{\widetilde s_{n}:H^{CM}_{L\hskip-.015in .}(\Gamma\hskip-.03in .^+)_{n-1}\to H^{CM}_{L\hskip-.015in .}(\Gamma\hskip-.03in .^+)_n\}_{n\ge 0}
\]
 By assumption, $\Gamma(\varepsilon)\hskip-.015in .$ admits a 
simplicial contraction $s\hskip-.015in .' = \{s'_{n+1} : \Gamma(\varepsilon)_n \to
\Gamma(\varepsilon)_{n+1}\}_{n\ge 0}$ which is p-bounded. Fix a section
 $s(0):\Gamma_{-1}\rightarrowtail \Gamma_0,\, \varepsilon_0 \circ
s(0) =$ identity, with $s(0)(1) = 1$ and $s(0)$ minimal with respect to
word-length. Define $s(n) := s^{(n)}_0 \circ s(0): \Gamma_{-1} \rightarrowtail
\Gamma_n$.  Note that

\begin{equation}\label{eqn:identities}
\begin{gathered}
\varepsilon_n\circ s(n) = \text{ identity } \qquad \forall\, n\ge 0\; , \\
\partial_i \circ s(n) = s(n-1) \qquad\quad \forall n\ge 1, 0\le i \le n\;
, \\ s_i \circ s(n-1) = s(n) \qquad\qquad \forall n\ge 1, 0\le i \le
n-1\; . 
\end{gathered}
\end{equation}

For arbitrary $g\in \Gamma_n$, $g(s(n)(\varepsilon_n(g)))^{-1}\in
\Gamma(\varepsilon)_n$. We define $\widetilde s_{n+1}$ by
\begin{equation}\label{eqn:contraction}
\widetilde s_{n+1}(g) = s'_{n+1}(g(s(n)(\varepsilon_n(g)))^{-1})s(n+1)(\varepsilon_n(g))
\end{equation}

This defines a map of sets $\widetilde s_{n+1} :\Gamma_n\to \Gamma_{n+1}$, which
extends uniquely to a map of vector spaces $\widetilde s_{n+1} : \mathbb C[\Gamma_n]\to \mathbb C[\Gamma_{n+1}]$ where $\widetilde s_0 = s(0)$. By the simplicial identities
\begin{equation}\label{eqn:chaincontract}
d_{n+1}\widetilde s_{n+1} = (-1)^{n+1}(id) + \widetilde s_nd_n
\end{equation}
where $d_m = \sum_{i=0}^m \partial_i : \mathbb C[\Gamma_m]\to \mathbb C[\Gamma_{m-1}]$. It follows that $\widetilde s_* = \{\widetilde s_n\}_{n\ge 0}$ is a
p-bounded contraction of the augmented simplicial vector space $\mathbb
C[\Gamma\hskip-.03in .^+]$. Our goal will be to show that $\widetilde s_*$ is continuous with respect
to the semi-norms used to complete $\mathbb C[\Gamma\hskip-.03in .^+]$ to form
$H^{CM}_{L\hskip-.015in .}(\Gamma\hskip-.03in .^+)$. By hypothesis, $(\Gamma_n, L_n)$ is a free group with proper word-length metric when $n\ge 0$. However,
it may be infinitely generated. For a finitely generated free group $F$ on
more than one generator, one has Haagerup's inequality [H]

\begin{equation}\label{eqn:Haag}
\|\varphi\| \le 2\nu_{2,2}(\varphi)
\end{equation}

\begin{observation}\label{obs:inf}
Let $(F,\mathbb X,L)$ represent a free group $F$ on a countable
generating set $\mathbb X$, where $L$ is a word-length metric on $F$ induced by a proper function on the generating set $\mathbb X$ [O1]. Then (as in the case of finitely generated free groups) the inequality $\|x\| \le 2\nu_2(x)$ is satisfied for all $x\in H^{2,\infty}_L(F)$.
 Consequently, $H^{2,\infty}_L(F)$ is a dense, holomorphically closed involutive subalgebra of $C^*_r(F)$ containing $\mathbb C[F]$, and the completions of $\mathbb C[F]$ with respect to the collections of semi-norms $\{\nu_{2,s}\}_{s\ge 0}$ and $\{\eta_m\}_{m\ge 0}$ are the same.
\end{observation}

\begin{proof}
 Let $\mathbb X_m = \{x\in \mathbb X\;|\; L(x)\le m\}$, and let $F_m$ be the subgroup of $F$ generated by $\mathbb X_m$, 
where $L_m = L|_{F_m}$. We wish to show that $H^{\infty}_L(F)$ is contained in $C^*_r(F)$. Let 
$x = \sum_g\lambda(g)g\in H^{2,\infty}_L(F)$, and for each
$p$ let $x_p = \sum_{g\in F_p}\lambda(g)g$. For all $N > m,n$
\begin{equation}\label{eqn:normN}
\|x_m - x_n\|_N \le 2\nu_{2,2,N}(x_m - x_n)
\end{equation}
by (\ref{eqn:Haag}), where the norm on the left is the $C^*$ norm in $C^*_r(F_N)$, and the norm
on the right is in
$H^{2,\infty}_{L_N}(F_N)$. Denote the norm in $C^*_r(F)$ by $\|_-\|$.
Then (\ref{eqn:normN}) implies
\[
\|x_m - x_n\| \le 2\nu_{2,2}(x_m - x_n)
\]
Thus the Cauchy sequence $\{x_p\}$ converges in $C^*_r(F)$. Since it
converges to $x$ in $\ell^2(F)$, $x\in C^*_r(F)$.
\end{proof}
\vskip.2in

Returning to the proof at hand, we consider two cases.
\vskip.2in

\underbar{$n = 0$} We need to show $\widetilde s_0 = s(0) : \mathbb C[\Gamma_{-1}]\to
\mathbb C[\Gamma_0]$ is continuous with respect to the semi-norms in $S_0$. For each $m$, the set $S_{0,m}$ consists of
a single element $s_0^{(m)} : \Gamma_0\to \Gamma_m$. The proof for this case
follows from the sequence of inequalities
\begin{gather*}
(s_0^{(m)})^*(\eta_{m,j})(s(0)(x)) = \eta_{m,j}(s_0^{(m)}(s(0)(x)))\\
\le (2)2^j\nu_{2,m,j+2}(s_0^{(m)}(s(0)(x)))\qquad\qquad \text{by Lemma \ref{lemma:bound}
and Observation \ref{obs:inf}}\\
\le C_m 2^{j+1} \nu_{2,0,j+2+k_m}(s(0)(x))\qquad\qquad \text{as $s_0^{(m)}$
is p-bounded}\\
= C_m 2^{j+1}\nu_{2,-1,j+2+k_m}(x)\qquad\qquad \text{as $s(0)$ preserves
word-length}\\
\le C_m 2(j+2+k_m)2^{j+1}\eta_{-1,2(j+2+k_m)}(x)\qquad\qquad \text{by
Lemma \ref{lemma:bound}}
\end{gather*}
\vskip.2in

\underbar{$n > 0$} A semi-norm in $S_n$ is of the form $f^*(\eta_{m,j})$
where $f\in S_{n,m}$. Given $\widetilde s_{n} : \Gamma_{n-1}\to \Gamma_{n}$, the composition $f\circ \widetilde s_{n} : \Gamma_{n-1}\to \Gamma_m$
may be factored as
\[
\Gamma_{n-1}\overset {s_J\circ\partial_I}{\longrightarrow}
\Gamma_l\overset{\widetilde s_K}{\longrightarrow}\Gamma_m
\]
where $s_J$ is an iterated degeneracy, $\partial_I$ an iterated
composition of face maps, and $\widetilde s_K = \widetilde s_m\circ \widetilde
s_{m-1}\circ\dots\circ \widetilde s_{l+1}$ with $l < m$. This implies
\[
f^*(\eta_{m,j})(\widetilde s_{n}(x)) = \eta_{m,j}(\widetilde s_K((s_J\circ
\partial_I)(x)))
\]

If $m = -1$, then $f\circ \widetilde s_{n} : \Gamma_{n-1}\to \Gamma_{-1}$ is the
augmentation map $\varepsilon_{n-1}$, and
$$
\eta_{-1,j}(f(\widetilde s_{n}(x))) = \eta_{-1,j}(\varepsilon_n(x)) =
\varepsilon_{n-1}^*(\eta_{-1,j})(x)
$$

If $m\ge 0$, then
\begin{gather*}
\eta_{m,j}(\widetilde s_K((s_J\circ \partial_I)(x)))\\
\le 2^{j+1}\nu_{2,m,j+2}(\widetilde s_K((s_J\circ
\partial_I)(x)))\qquad\qquad\text{by Lemma \ref{lemma:bound}
and Observation \ref{obs:inf}}\\
\le 2^{j+1}(C_N)\nu_{2,l,j+2+N}((s_J\circ
\partial_I)(x))\qquad\qquad\text{by the p-boundedness of $\widetilde s_K$}\\
\le 2^{j+1}C_N(2(j+2+N))\eta_{l,2(j+2+N)}((s_J\circ \partial_I)(x))\\
= 2^{j+1}C_N(2(j+2+N))(s_J\circ \partial_I)^*(\eta_{l,2(j+2+N)})(x)
\end{gather*}
\vskip.1in

In other words, starting with the semi-norm $\eta = f^*(\eta_{m,j})\in
S_n$,
there is a constant $D_N$ and semi-norm
$\eta' = (s_J\circ \partial_I)^*(\eta_{l,2(j+2+N)})\in S_{n-1}$ such that
$$
\eta\circ \widetilde s_n\le D_N \eta'
$$
We conclude that for all $n\ge -1$ and semi-norms $\eta\in S_{n}$, there is a
semi-norm $\eta'\in S_{n-1}$ and a constant $C$ such that $\eta\circ \widetilde
s_{n} \le C\eta'$, verifying that for each $n\ge 0$, $\widetilde s_{n}:\mathbb C[\Gamma_{n-1}]\to \mathbb C[\Gamma_n]$ extends to a continuous morphism of Fr\'echet spaces
\begin{equation}
\widetilde s_{n}:H^{CM}_{L\hskip-.015in .}(\Gamma\hskip-.03in .^+)_{n-1}\to H^{CM}_{L\hskip-.015in .}(\Gamma\hskip-.03in .^+)_n
\end{equation}
Because $\widetilde s_*$ is a simplicial contraction on the dense augmented simplicial subspace $\mathbb C[\Gamma\hskip-.03in .^+]$, it follows by continuity that its extension satisfies the necessary identities to make it a simplicial contraction of the augmented simplicial Fr\'echet space $H^{CM}_{L\hskip-.015in .}(\Gamma\hskip-.03in .^+)$, completing the proof.
\end{proof}
\vskip.2in

One also has the $\ell^1$-rapid decay and algebraic versions of the previous theorem; the proof in both cases is quite straightforward, and is left to the reader.

\begin{theorem}\label{thm:ell1res} For an augmented simplicial group $(\Gamma\hskip-.03in .^+,L\hskip-.015in .^+)$, let $A(\Gamma\hskip-.03in .^+)$ denote either the augmented simplicial group algebra $\mathbb C[ \Gamma\hskip-.03in .^+]$ (equipped degreewise with the fine topology), or the augmented simplicial Fr\'echet algebra $H^{1,\infty}_{L\hskip-.015in .}(\Gamma\hskip-.03in .^+)$ (equipped degreewise with the Fr\'echet topology). If $(\Gamma\hskip-.03in .^+,L\hskip-.015in .^+)$ is a type $P$
resolution, the augmented simplicial complexes
\begin{gather*}
\{[n]\mapsto CH_*^t(A(\Gamma_n))\}_{n\ge -1}\\
\{[n]\mapsto CC_*^t(A(\Gamma_n))\}_{n\ge -1}
\end{gather*}

are of resolution type, as are the cosimplicial cocomplexes
\begin{gather*}
\{[n]\mapsto CH^*_t(A(\Gamma_n))\}_{n\ge -1}\\
\{[n]\mapsto CC^*_t(A(\Gamma_n))\}_{n\ge -1}
\end{gather*}
\end{theorem}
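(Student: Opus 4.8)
The plan is to mirror the proof of Theorem \ref{thm:CMres} essentially line for line, exploiting that in both of the present cases the analytic estimates that dominated the Connes--Moscovici argument degenerate to trivialities. As there, I would first reduce everything to producing, for $A(\Gamma.^+) = \mathbb C[\Gamma.^+]$ (fine topology) resp.\ $A(\Gamma.^+) = H^{1,\infty}_{L.}(\Gamma.^+)$, a continuous simplicial contraction $\{\wt s_n\}_{n\ge 0}$ of the underlying augmented simplicial topological vector space. The passage from such a contraction to the assertion that $\{[n]\mapsto CH_*^t(A(\Gamma_n))\}$ and $\{[n]\mapsto CC_*^t(A(\Gamma_n))\}$ are of resolution type is purely formal and identical to the corresponding step of Theorem \ref{thm:CMres}: in each fixed Hochschild (resp.\ cyclic) degree $k$, the functor $[n]\mapsto CH_k^t(A(\Gamma_n))$ (resp.\ $CC_k^t$) is a finite direct sum of levelwise completed tensor powers $A(\Gamma_n)^{\hat\otimes(j+1)}$, and a simplicial contraction of $A(\Gamma.^+)$ exhibits the augmentation as a simplicial homotopy equivalence onto the constant object $A(\Gamma_{-1})$, a property preserved by levelwise tensor powers and by finite direct sums. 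Hence each such augmented simplicial vector space is a resolution.

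Next I would supply the augmented simplicial (topological) algebra structure on $A(\Gamma.^+)$. For $\mathbb C[\Gamma.^+]$ in the fine topology there is nothing to check, every linear map being continuous. For $H^{1,\infty}_{L.}(\Gamma.^+)$ I would invoke the functoriality of Proposition \ref{prop:l1-functor}: each face, degeneracy, and the augmentation map is a p-bounded group homomorphism, and since $\nu_{1,s}$ (see (\ref{eqn:nus1})) is a weighted $\ell^1$-norm computed directly on the group basis, any p-bounded homomorphism $\phi$ with $L'(\phi(g))\le p(L(g))$ satisfies $\nu_{1,s}(\phi_* f)\le C\,\nu_{1,s\cdot\deg p}(f)$, yielding a continuous algebra homomorphism. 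In particular no enlargement of the semi-norm families is needed, in contrast with the Connes--Moscovici setting of Proposition \ref{prop:map}; this already makes $H^{1,\infty}_{L.}(\Gamma.^+)$ an augmented simplicial Fr\'echet algebra.

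Finally I would reuse verbatim the combinatorial contraction $\wt s_*$ constructed in (\ref{eqn:contraction})--(\ref{eqn:chaincontract}), which is a p-bounded simplicial contraction of $\mathbb C[\Gamma.^+]$, and verify continuity in each topology. For the fine topology continuity is automatic, and since the algebraic tensor product is already complete there one has $CH_*^t(\mathbb C[\Gamma_n]) = CH_*^a(\mathbb C[\Gamma_n])$ and similarly for $CC_*$, so the algebraic case is immediate. For the $\ell^1$-algebra the estimate is equally direct: by (\ref{eqn:contraction}) the map $\wt s_{n+1}$ carries each basis element $g\in\Gamma_n$ to a single element of $\Gamma_{n+1}$ whose word-length is p-bounded in $L_n(g)$, so the monomial form of $\nu_{1,s}$ gives $\nu_{1,s}(\wt s_{n+1}(f))\le C\,\nu_{1,s'}(f)$ for a suitable $s'$ and constant $C$, with no recourse to Haagerup's inequality or to the interplay of the $\eta_m$ and $\nu_{2,s}$ families exploited in Lemma \ref{lemma:bound} and Observation \ref{obs:inf}. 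Thus $\wt s_*$ extends to a continuous simplicial contraction in both cases, and the cosimplicial cochain statements follow by transposing $\wt s_*$ (continuous, resp.\ algebraic, dual), a contraction dualizing to a cosimplicial contraction. The only point requiring care — and the nearest thing to an obstacle — is confirming that the formal implication ``contraction of the algebra $\Rightarrow$ resolution type of the Hochschild and cyclic complexes and their duals'' genuinely transfers from Theorem \ref{thm:CMres}; once that is granted, the analytic content that governed the Connes--Moscovici case simply evaporates, which is exactly why this version can be left to the reader.
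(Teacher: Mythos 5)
Your proposal is correct and is exactly the argument the paper intends: it leaves this proof to the reader as a ``straightforward'' replay of Theorem~\ref{thm:CMres}, and you have correctly identified that the only nontrivial input there --- the Haagerup/Lemma~\ref{lemma:bound} estimates needed to control the $\eta_{m,j}$ semi-norms --- trivializes here, since continuity of $\widetilde s_*$ is automatic in the fine topology and, for $\nu_{1,s}$, follows directly from the p-boundedness of $\widetilde s_{n+1}$ on group elements. The formal step from a continuous simplicial contraction of $A(\Gamma\hskip-.03in .^+)$ to resolution type of the (co)chain complexes is the same as in Theorem~\ref{thm:CMres}, so nothing further is needed.
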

\vskip.2in

The definition of $H^{CM}_{L\hskip-.015in .}(\Gamma\hskip-.03in .^+)$ given above involves the minimum amount of adjustment necessary to extend the construction of the Connes-Moscovici algebra $H^{CM}_L(\pi)$ to a type $P$ resolution of $\pi$. It does not allow for arbitrary homomorphisms of augmented simplicial groups $\Gamma\hskip-.03in .^+\to \Gamma'\hskip-.03in .^+$ to be extended to $H^{CM}_{L\hskip-.015in .}(\Gamma\hskip-.03in .^+)$. Over the category $(f.p.groups)$, this can be rectified as follows.
\vskip.2in

For each object $G$ of $(f.p.groups)$, let
\[
T(G) := \underset{G'\in obj(f.p.groups)}{\bigcup}Hom(G,G')
\]
be the set of all morphisms in $(f.p.groups)$ originating with $G$. For each $f\in T(G), f:G\to G'$, let $\ell^2(f)$ denote a copy of $\ell^2(G')$ indexed by $f$. Let $\rho_f:G\to {\cal L}(\ell^2(f))$ denote the unitary representation induced by the left regular representation of $G'$ on $\ell^2(f) = \ell^2(G')$ precomposed with the homomorphism $f$. Let $\ell^2(T(G)) := \oplus_{f\in T(G)} \ell^2(f)$ be the direct sum of the Hilbert spaces $\ell^2(f)$, and let $\rho_{T(G)}:= \oplus_{f\in T(G)}\rho_f$ be the direct sum of the unitary representations $\rho_f$. Then $C_m^*(G)$ will denote the completion of $\mathbb C[G]$ in the operator norm associated to the unitary representation $(\rho_{T(G)},\ell^2(T(G)))$. This $C^*$-algebra lies between the maximal $C^*$-algebra $C^*(G)$ and the reduced $C^*$-algebra $C^*_r(G)$, in that there is a factorization
\[
C^*(G)\to C^*_m(G)\to C^*_r(G)
\]
for each finitely presented group $G$. Moreover, the map $C^*(G)\to C^*_m(G)$ is induced by a natural transformation of functors $C^*(_-)\to C^*_m(_-):(f.p.groups)\to (C^*$-$algebras)$.
\vskip.2in

We can now duplicate the Connes-Moscovici construction on this sum. For each $f\in T(G)$, define
$D^f_L : \ell^2(T(G))\to \ell^2(T(G))$ on basis elements by 
\begin{itemize}
\item $D^f_L(\delta_g) = \delta_g$ if $\delta_g\in\ell^2(f')$, $f'\ne f$
\item $D^f_L(\delta_g) = L_{st}(g)\delta_g$ when $\delta_g\in\ell^2(f)$. 
\end{itemize}

As before, each operator $D^f_L$  defines an unbounded operator $\partial^f_L : {\cal L}(\ell^2(T(G)))\to {\cal L}(\ell^2(T(G)))$ given by
\[
\partial^f_L(M) = ad(D^f_L)(M) = D^f_L\circ M - M\circ D^f_L
\]

We can now define the  \underbar{maximal Connes-Moscovici algebra} associated to $G$ as

\begin{definition}\label{def:maximal} $H^{CM}_m(G) = \{a\in C^*_m(G)\,|\, (\partial^f_L)^k(a)\in {\cal L}(\ell^2(T(G)))\,\text{ for all } k\ge 0, f\in T(G)\}$.
\end{definition}

This algebra admits semi-norms induced by the $(\partial^f_L)^k$:

\begin{equation}\label{eqn:etamm}
\eta^f_n(a) := \sum_{i=0}^n \binom ni \|(\partial^f_L)^i(a)\|_m
\end{equation}

where $\|_-\|_m$ is the $C^*$-norm on $C^*_m(G)$ and $\partial_L^0 = Id$.
Exactly as above, one has for each $f\in T(G)$ inequalities

\begin{gather*}
\eta^f_n(ab)\le \eta^f_n(a)\eta^f_n(b)\\
\eta^f_m(a)\le \eta^f_n(a)\quad\text{when}\, m\le n
\end{gather*}
\vskip.2in

\begin{remark} {\rm In Definition \ref{def:cmsimp} above, we could have proceeded as follows: let $T_m = \coprod_{n\ge -1}S_{m,n}$ denote the set of maps in the simplicial group $\Gamma\hskip-.03in .$ originating with $\Gamma_m$. For each $T_m\ni f:\Gamma_m\to \Gamma_n$, let $\ell^2(f)$ denote a copy of $\ell^2(\Gamma_n)$ indexed by $f$. Write $\ell^2(T_m)$ for $\oplus_{f\in T_m}\ell^2(f)$, $\rho(T_m)$ for $\oplus_{f\in T_m}\rho(f)$, where $\rho(f)$ is defined as above. For each $m\ge 0$, $C^*_{T\hskip-.03in .}(\Gamma\hskip-.03in .)$ in degree $-1$ is $C^*_r(\pi)$, $\pi = \pi_0(\Gamma\hskip-.03in .)$, and in dimension $n\ge 0$ is the $C^*$-algebra completion of $\mathbb C[\Gamma_n]$ with respect to the unitary representation $(\rho(T_n),\ell^2(T_n))$. This is a simplicial $C^*$-algebra with $\pi_0(C^*_{T\hskip-.03in .}(\Gamma\hskip-.03in .))$ mapping to $C^*_r(\pi)$ by a map inducing an isomorphism in topological $K$-theory. Similarly, for $n=-1$ set $H^{CM}_{T\hskip-.03in .}(\Gamma\hskip-.03in .) = H^{CM}_L(\pi)$, and for $n\ge 0$
\[
H^{CM}_{T\hskip-.03in .}(\Gamma\hskip-.03in .)_n := \{a\in C^*_{T\hskip-.03in .}(\Gamma\hskip-.03in .)_n\,|\, (\partial^f_L)^k(a)\in {\cal L}(\ell^2(T_n))\,\text{ for all } k\ge 0, f\in T_n\}
\]
The fact that $T_n$ is a finite set of group homomorphisms for each $n$ implies that the natural map
$H^{CM}_{T\hskip-.03in .}(\Gamma\hskip-.03in .)_n\to H^{CM}_{L\hskip-.015in .}(\Gamma\hskip-.03in .)_n$ is an isomorphism for each $n$. In the more general case, when the set of homomorphisms is infinite, these two constructions are not equivalent, and the proper construction is that given in Def. (\ref{def:maximal}).}
\end{remark}
\vskip.2in

\begin{proposition}\label{prop:cmmax} The association $G\mapsto H^{CM}_m(G)$ defines a functor
\[
H^{CM}_m(_-): (f.p.groups)\to (Fr\acute{e}chet\ algebras)
\]
\end{proposition}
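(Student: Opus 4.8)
The plan is to show that $H^{CM}_m(-)$ is functorial on $(f.p.groups)$ by exhibiting, for each morphism $f \colon G \to G'$, a continuous Fr\'echet-algebra homomorphism $\hat f \colon H^{CM}_m(G) \to H^{CM}_m(G')$, and then checking compatibility with composition. The central observation is that the maximal Connes--Moscovici algebra was constructed precisely so that the defining representation $(\rho_{T(G)}, \ell^2(T(G)))$ collects all morphisms out of $G$ simultaneously; this built-in redundancy is what lets an arbitrary homomorphism be absorbed. Concretely, I would first describe $H^{CM}_m(G)$ as the completion of $\mathbb C[G]$ with respect to the countable collection of submultiplicative semi-norms $\{\eta^f_n\}_{f \in T(G),\, n \ge 0}$ from (\ref{eqn:etamm}), noting that $T(G)$ and hence this collection is countable because $Obj(f.p.groups)$ and each $Hom$-set are countable. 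This reduces the problem to an application of Proposition \ref{prop:map}: I only need to verify the hypothesis that pulling back a semi-norm from $G'$ along $f$ lands among (or is dominated by) the semi-norms on $G$.

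The main step is the semi-norm-matching argument. Given $f \colon G \to G'$ and a semi-norm $\eta^{f'}_n$ on $\mathbb C[G']$ indexed by some $f' \in T(G')$, $f' \colon G' \to G''$, I would observe that the composite $f' \circ f \colon G \to G''$ is itself an element of $T(G)$. The key point is that the unbounded derivation $\partial^{f'}_L$ on $\mathcal L(\ell^2(T(G')))$ is defined using the word-length operator $D^{f'}_L$ supported on the summand $\ell^2(f') = \ell^2(G'')$, and the corresponding derivation $\partial^{f' \circ f}_L$ on $\mathcal L(\ell^2(T(G)))$ uses $D^{f'\circ f}_L$ supported on the summand $\ell^2(f' \circ f)$, which is also a copy of $\ell^2(G'')$ carrying the same word-length function $L^{G''}_{st}$. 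Because both derivations act via $ad$ of the same word-length operator on the same $G''$-module, the group-algebra map induced by $f$ intertwines them: for $a \in \mathbb C[G]$ one gets $\|(\partial^{f'}_L)^i(f_*(a))\|_m = \|(\partial^{f'\circ f}_L)^i(a)\|_m$ on the relevant summand, so that $\eta^{f'}_n(f_*(a)) = \eta^{f'\circ f}_n(a)$, i.e.\ $f^*(\eta^{f'}_n) = \eta^{f'\circ f}_n$ is already one of the defining semi-norms on $\mathbb C[G]$. Thus the hypothesis $f^*\eta \in S_{G}$ of Proposition \ref{prop:map} holds \emph{on the nose}, not merely up to domination, and $\hat f$ exists and is continuous.

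Once $\hat f$ is produced, functoriality follows formally: both $\hat f$ and the identity are determined by their restriction to the dense subalgebra $\mathbb C[G]$, where they agree with the group-algebra functor, so $\widehat{g \circ f} = \hat g \circ \hat f$ and $\widehat{id} = id$ by continuity and density. I expect the main obstacle to be the bookkeeping in the semi-norm identity of the previous paragraph --- specifically, verifying that $\partial^{f'}_L \circ f_* = f_* \circ \partial^{f'\circ f}_L$ as operators, since one must track carefully that $f_*$ sends the summand structure of $\ell^2(T(G))$ compatibly into that of $\ell^2(T(G'))$ (mapping the summand $\ell^2(f'\circ f)$ to $\ell^2(f')$) and that $f$ being linearly bounded with respect to $L^G_{st}$ is what guarantees the representation-theoretic map is well behaved. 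This is the same norm-comparison mechanism used in Proposition \ref{prop:map}, and the indexing of $\ell^2(T(G))$ by the \emph{entire} outgoing morphism set is exactly the device that turns a potentially inequality-laden estimate into an equality.
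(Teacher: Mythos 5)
Your proposal is correct and follows essentially the same route as the paper, which likewise reduces the claim to the countability of $T(G)$ (via the enumerability of $Obj(f.p.groups)$ and of each Hom set) and to the mechanism of Proposition \ref{prop:map}, observing that for $f\colon G\to G'$ and $f'\in T(G')$ the pullback $f^*(\eta^{f'}_n)$ is controlled by $\eta^{f'\circ f}_n$ with $f'\circ f\in T(G)$ --- the paper compresses all of this into the remark that ``functoriality is built in to the definition.'' The one small nuance is that your claimed on-the-nose equality $\eta^{f'}_n(f_*(a))=\eta^{f'\circ f}_n(a)$ holds only for the terms with $i\ge 1$ (where both commutators are supported on a single summand isomorphic to $\ell^2(G'')$), while the $i=0$ term gives merely $\|f_*(a)\|_m\le\|a\|_m$ since $T(G)$ may contain morphisms not factoring through $f$; but domination rather than equality is all that the argument of Proposition \ref{prop:map} requires, so this does not affect the proof.
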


\begin{proof} As we have seen, using the standard word-length function on the objects of $(f.p.groups)$ allows one to enumerate both the objects of $(f.p.groups)$ and, for any pair of objects $G,G'\in obj(f.p.groups)$, the Hom set $Hom(G,G')$. Consequently $T(G)$ is countable for each $G\in obj(f.p.groups)$. The functoriality is built in to the definition of the algebra (compare Prop. \ref{prop:map} above).
\end{proof}

The functoriality of both $H^{CM}_m(_-)$ and $C^*_m(_-)$ allows one to treat the inclusion $H^{CM}_m(G)\hookrightarrow C^*_m(G)$ as the restriction to $G\in obj(f.p.groups)$ of a natural transformation
\[
H^{CM}_m(_-)\hookrightarrow C^*_m(_-)
\]
If $\Gamma\hskip-.03in .$ is a simplicial object in $(f.p.groups)$, then this natural transformation induces a continuous homomorphism of simplicial algebras
\[
H^{CM}_m(\Gamma\hskip-.03in .)\hookrightarrow C^*_m(\Gamma\hskip-.03in .)
\]

For the following proposition, we recall that $A^f$ indicates the continuous topological algebra $A$ is equipped with the fine topology.

\begin{proposition} Let $\Gamma\hskip-.03in .$ be a simplicial object in $(f.p.groups)$, with $\pi := \pi_0(\Gamma\hskip-.03in .)$. Then there is an isomorphism
\[
K_*^t(\pi_0(H^{CM}_m(\Gamma\hskip-.03in .)^f))\cong K_*^t(C^*_m(\pi)),\quad *\ge 1
\]
Moreover, the natural map $\mathbb C[\pi] \to C^*_m(\pi)$ factors as
\begin{equation}\label{eqn:factor}
\mathbb C[\pi] = \pi_0(\mathbb C[\Gamma\hskip-.03in .])\to \pi_0(H^{CM}_m(\Gamma\hskip-.03in .)^f)\to C^*_m(\pi)
\end{equation}
\end{proposition}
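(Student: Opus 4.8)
The plan is to prove the two assertions in sequence, establishing the $K$-theory isomorphism first and then reading off the claimed factorization. For the isomorphism, I would exploit the general principle that topological $K$-theory (in degrees $*\ge 1$) depends only on the homotopy type of a simplicial algebra through its $\pi_0$, provided the relevant spectra behave well under geometric realization. More precisely, the functoriality established in Proposition \ref{prop:cmmax} means that $H^{CM}_m(\Gamma\hskip-.03in .)$ is a genuine simplicial object in $(Fr\acute{e}chet\ algebras)$, and passing to the fine topology degreewise gives a simplicial continuous topological algebra $H^{CM}_m(\Gamma\hskip-.03in .)^f$. The first step is therefore to identify $\pi_0(H^{CM}_m(\Gamma\hskip-.03in .)^f)$ as the coequalizer (cokernel of $\partial_0 - \partial_1$) of the first two face maps, and to compare it with the Connes-Moscovici-type algebra attached directly to $\pi = \pi_0(\Gamma\hskip-.03in .)$.

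The key technical step is to show that the inclusion $H^{CM}_m(_-)\hookrightarrow C^*_m(_-)$, being a dense, holomorphically closed subalgebra inclusion in each degree (this is the maximal analogue of Proposition \ref{prop:cm}), induces an isomorphism on topological $K$-theory. Holomorphic functional calculus closure is precisely the hypothesis guaranteeing that a dense subalgebra inclusion of a Fr\'echet algebra into a $C^*$-algebra is a $K$-theory isomorphism; this is the standard density theorem for $K$-theory of smooth subalgebras. Applying this degreewise and taking $\pi_0$, I would then invoke the fact that $\pi_0(C^*_m(\Gamma\hskip-.03in .)) \to C^*_m(\pi)$ induces an isomorphism in topological $K$-theory, a fact recorded in the Remark preceding this proposition (where the analogous simplicial $C^*$-construction is shown to have $\pi_0$ mapping to $C^*_r(\pi)$ by a $K$-theory isomorphism, and the same argument applies verbatim to the maximal completion). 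Chaining these two isomorphisms yields
\[
K_*^t(\pi_0(H^{CM}_m(\Gamma\hskip-.03in .)^f))\cong K_*^t(\pi_0(C^*_m(\Gamma\hskip-.03in .)))\cong K_*^t(C^*_m(\pi))
\]
for $*\ge 1$, which is the claimed isomorphism.

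For the factorization, I would simply observe that the natural map $\mathbb C[\pi] = \pi_0(\mathbb C[\Gamma\hskip-.03in .]) \to C^*_m(\pi)$ is the one induced on $\pi_0$ by the degreewise dense inclusions $\mathbb C[\Gamma_n]\hookrightarrow H^{CM}_m(\Gamma_n)\hookrightarrow C^*_m(\Gamma_n)$. Since each of these is functorial and compatible with the face and degeneracy maps (again by Propositions \ref{prop:map} and \ref{prop:cmmax}), applying $\pi_0$ preserves the composite, giving the factorization in \eqref{eqn:factor} directly. The only subtlety is ensuring that the intermediate term $\pi_0(H^{CM}_m(\Gamma\hskip-.03in .)^f)$ maps correctly to $C^*_m(\pi)$ rather than merely to $\pi_0(C^*_m(\Gamma\hskip-.03in .))$; this follows by postcomposing with the $\pi_0$-level augmentation $\pi_0(C^*_m(\Gamma\hskip-.03in .))\to C^*_m(\pi)$ used above.

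The main obstacle I anticipate is the degreewise holomorphic-functional-calculus argument: one must check that the maximal Connes-Moscovici completion $H^{CM}_m(G)$ is genuinely holomorphically closed in $C^*_m(G)$, not merely dense. The construction via the direct sum of derivations $\partial^f_L$ over the (countable) set $T(G)$ means the spectral/functional-calculus argument from Proposition \ref{prop:cm} must be adapted to a countable family of unbounded derivations simultaneously, and one must verify that closure under $\bigcap_k Domain((\partial^f_L)^k)$ for all $f$ survives the functional calculus. Once that degreewise fact is secured, the rest is a formal manipulation of $\pi_0$ and the density theorem in $K$-theory.
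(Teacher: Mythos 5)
Your outline correctly identifies two of the paper's ingredients -- the degreewise density-plus-holomorphic-closure argument (via Baum retopologization) giving $K_*^t(H^{CM}_m(\Gamma_n)^f)\cong K_*^t(C^*_m(\Gamma_n))$, and the identification $\pi_0(C^*_m(\Gamma\hskip-.03in .)) = C^*_m(\pi)$ -- and your treatment of the factorization \eqref{eqn:factor} matches the paper's (commutativity of the square of face maps, coming from functoriality of $H^{CM}_m(_-)$ and $C^*_m(_-)$ on $(f.p.groups)$ together with the degreewise factorization $\mathbb C[G]\hookrightarrow H^{CM}_m(G)\to C^*_m(G)$).

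The gap is in the phrase ``applying this degreewise and taking $\pi_0$.'' The algebra $\pi_0(A\hskip-.015in .)$ of a simplicial algebra is a \emph{quotient}: writing $A^j_n := \cap_{i=0}^j \ker(\partial_i)$ and $A^0_0 := \partial_1(A^0_1)$, one has $\pi_0(A\hskip-.015in .) = A_0/A^0_0$. A degreewise $K$-theory isomorphism $K_*^t(B_n^f)\cong K_*^t(A_n)$ does not formally descend to an isomorphism on the $K$-theory of these quotients; topological $K$-theory does not commute with cokernels of algebra maps. What the paper actually does is prove the isomorphism not only for the full degreewise algebras but for all the subalgebras $H^{CM}_m(\Gamma\hskip-.03in .)^j_n\to C^*_m(\Gamma\hskip-.03in .)^j_n$ (the iterated kernels of face maps), identify $C^*_m(\Gamma\hskip-.03in .)^0_0$ as the norm closure of $\mathbb C[\Gamma\hskip-.03in .]^0_0$, and then feed the short-exact sequence $A^0_0\inj A_0\surj \pi_0(A\hskip-.015in .)$ into the excision machinery of the Appendix (the exact sequence $K_1^t(B^f)\to K_1(\ov{B})\to K_0^t(J^f)\to K_0^t(B^f)\to K_0^t(\ov{B})$ for a dense holomorphically closed ideal $J\subset I$, spliced with the higher long-exact sequence, followed by the five lemma). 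This is also exactly where the restriction $*\ge 1$ enters -- the comparison fails at $K_0$ of the quotient -- a restriction your argument states but never accounts for. To repair your proof you would need to add: (i) the $K$-theory isomorphism for the ideals $(H^{CM}_m(\Gamma\hskip-.03in .)^0_1)^f\to C^*_m(\Gamma\hskip-.03in .)^0_1$ and their images under $\partial_1$, and (ii) the five-lemma comparison of the two long-exact sequences associated to $A^0_0\inj A_0\surj A_0/A^0_0$ for the Connes--Moscovici and $C^*$ versions respectively.
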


\begin{proof} 
For a simplicial algebra $A\hskip-.015in .$, set $A^j_n := \cap_{i = 0}^j ker(\partial_i:A_n\to A_{n-1})$ for $0\le j\le n$ and $n\ge 1$. Also, set $A^0_0 := \partial_1(A^0_1)$. Then $\pi_0(A\hskip-.015in .) = A_0/A^0_0$. As in [O2, Cor. 1.7], one identifies $C^*_m(\Gamma\hskip-.03in .)^0_0$ as the norm-closure of $\mathbb C[\Gamma\hskip-.03in .]^0_0$ in $C^*_m(\Gamma_0)$, with $\pi_0(C^*_m(\Gamma\hskip-.03in .)) = C^*_m(\pi)$. Following [O2, Lemma 5.2] and applying Baum retopologization [Appendix], one has that the map $H^{CM}_m(\Gamma\hskip-.03in .) ^j_n\to C^*_m(\Gamma\hskip-.03in .)^j_n$ induces an isomorphism on topological $K$-groups
\begin{equation}\label{eqn:inviso}
K_*^t((H^{CM}_m(\Gamma\hskip-.03in .) ^j_n)^f)\cong K_*^t(C^*_m(\Gamma\hskip-.03in .)^j_n),\quad *\ge 0
\end{equation}
for all $0\le j\le n$, $n\ge 0$. The isomorphism in (\ref{eqn:inviso}) induces an isomorphism\footnote{ see Appendix}
\[
K_*^t(\pi_0(H^{CM}_m(\Gamma\hskip-.03in .)^f))\cong K_*^t(\pi_0(C^*_m(\Gamma\hskip-.03in .))) = K_*^t(C^*_m(\pi)),\quad *\ge 1
\]
To verify the factorization of (\ref{eqn:factor}) above, it suffices to observe the existence of a commutative diagram
\vskip.2in
\centerline{
\xymatrix{
\mathbb C[\Gamma_1]\ar[d]^{\partial_i}\ar[r] & H^{CM}_m(\Gamma_1)\ar[d]^{\partial_i}\\
\mathbb C[\Gamma_0]\ar[r] & H^{CM}_m(\Gamma_0)
}
}
\vskip.2in
which, in turn, follows from the functoriality of both $C^i(_-)$ and $H^{CM}_m(_-)$ on the category $(f.p.groups)$ once one knows that for each finitely presented group $G$, the natural inclusion $\mathbb C[G]\hookrightarrow C^*_m(G)$ factors as
\[
\mathbb C[G]\hookrightarrow H^{CM}_m(G)\to C^*_m(G)
\]
\end{proof}
\newpage


\section{Detecting the assembly map}
\vskip.4in

\subsection{Spectral sequences in Hochschild and cyclic (co-)homology}
\vskip.3in

 We begin with the observation (compare[O1, Appendix]) that if $\Gamma\hskip-.03in .$ is a free simplicial resolution of $\pi$, the homology and cohomology of $B\pi$ (with coefficients) may be computed as follows: let $V$ be a trivial $\pi$-module. Set $D_n := H_1(B\Gamma_{n-1};\mathbb Z)$, with $d_n = \sum_{i=0}^{n-1}(-1)^i (\partial_i)_*:D_n\to D_{n-1}$, $\partial_i:\Gamma_n\to \Gamma_{n-1}$. Then $(D_*,d_*)$ is a chain complex; its dual (with coefficients in $V$) is given by $D^n(V) := H^1(B\Gamma_{n-1};V), \delta^n = \sum_{i=0}^n (-1)^i (\partial_i)^*:D^n\to D^{n+1}$. For $n\ge 1$ one then has
\begin{equation}
\begin{gathered}\label{eqn:isom}
H_n(B\pi;V) = H_n(D_*\otimes V,d_*\otimes 1)\\
H^n(B\pi;V) = H^n(D^*(V))
\end{gathered}
\end{equation}
Next, consider the spectral sequence in cyclic homology
\begin{equation}\label{eqn:spec-seq-alg}
E^1_{**} = \{E^1_{p,q} = HC_p(\mathbb C[\Gamma_q]) \Rightarrow HC_{p+q}(\mathbb C[\Gamma\hskip-.03in .]) \cong HC_{p+q}(\mathbb C[\pi])\}_{p,q\ge 0}
\end{equation}
When $p > 0$ we have $HC_p(\mathbb C[\Gamma_q]) = HC_p(\mathbb C[\Gamma_q])_{<1>}$ for all $q\ge 0$. For any free group $F$, the cyclic homology of $\mathbb[F]$ above dimension $0$ is given by
\begin{equation}\label{eqn:cyclic-free}
\begin{gathered}
HC_{2p+1}(\mathbb C[F]) = H_1(BF),\quad p\ge 0\\
HC_{2p}(\mathbb C[F]) = \mathbb C,\quad p\ge 1
\end{gathered}
\end{equation}
Graphically, then, the $E^1_{**}$-term in (\ref{eqn:spec-seq-alg}) may be represented as
\begin{equation}\label{diagram1}
\xymatrix{
\vdots \dto & \vdots \dto & \vdots \dto & \vdots \dto \\
HC_0(\mathbb C[\Gamma_3])\dto & HC_1(\mathbb C[\Gamma_3])\dto & HC_2(\mathbb C[\Gamma_3])\dto &
HC_3(\mathbb C[\Gamma_3])\dto &\cdots\\
HC_0(\mathbb C[\Gamma_2])\dto & HC_1(\mathbb C[\Gamma_2])\dto & HC_2(\mathbb C[\Gamma_2])\dto &
HC_3(\mathbb C[\Gamma_2])\dto &\cdots\\ 
HC_0(\mathbb C[\Gamma_1])\dto & HC_1(\mathbb C[\Gamma_1])\dto & HC_2(\mathbb C[\Gamma_1])\dto &HC_3(\mathbb C[\Gamma_1])\dto & \cdots\\ 
HC_0(\mathbb C[\Gamma_0]) & HC_1(\mathbb C[\Gamma_0]) & HC_2(\mathbb C[\Gamma_0]) & HC_3(\mathbb C[\Gamma_0]) & \cdots
}
\end{equation}
Incorporating the isomorphisms of (\ref{eqn:cyclic-free}), the diagram in (\ref{diagram1}) becomes
\begin{equation}\label{diagram2}
\xymatrix{
\vdots \dto & \vdots \dto & \vdots \dto^{id} & \vdots \dto & \vdots \dto^{id} & \vdots \dto \\
HC_0(\mathbb C[\Gamma_3])\dto & H_1(B\Gamma_3)\dto & \mathbb C\dto^{0} &
H_1(B\Gamma_3)\dto & \mathbb C \dto^{0} & H_1(B\Gamma_3)\dto &\cdots\\
HC_0(\mathbb C[\Gamma_2])\dto & H_1(B\Gamma_2)\dto & \mathbb C\dto^{id} &
H_1(B\Gamma_2)\dto & \mathbb C \dto^{id} & H_1(B\Gamma_2)\dto &\cdots\\
HC_0(\mathbb C[\Gamma_1])\dto & H_1(B\Gamma_1)\dto & \mathbb C\dto^{0} &
H_1(B\Gamma_1)\dto & \mathbb C\dto^{0} & H_1(B\Gamma_1)\dto &\cdots\\
HC_0(\mathbb C[\Gamma_0]) & H_1(B\Gamma_0) & \mathbb C & H_1(\Gamma_0) & \mathbb C & H_1(\Gamma_0) & \cdots
}
\end{equation}
\vskip.2in

Because $HC_*(\mathbb C[\pi])_{<1>} = \oplus_{m\ge 0} H_{*-2m}(B\pi;\mathbb C)$ is a summand of $HC_*(\mathbb C[\pi])$ to which the above spectral sequence is converging, the isomorphisms of (\ref{eqn:isom}) imply
\begin{itemize}
\item The spectral sequence above, with $E^1_{**}$-term given as in (\ref{diagram1}) or (\ref{diagram2}), collapses at the $E^2_{**}$-term; i.e., $E^2_{**} = E^{\infty}_{**}$.
\item For each $m\ge 0$, the summand $H_{*-2m}(B\pi;\mathbb C)$ of $HC_*(\mathbb C[\pi])_{<1>}$ identifies with $E^2_{2m+1,*-2m-1}$ when $(*-2m) > 0$. In particular, the unshifted summand $H_*(B\pi;\mathbb C)$ of $HC_*(\mathbb C[\pi])_{<1>}$ appears as $E^2_{1,*-1} = H_*(D_*,d_*)$.
\end{itemize}
\vskip.2in

The same analysis applies to the spectral sequence in cyclic cohomology. The relevant points are
\begin{itemize}
\item The spectral sequence 
\begin{equation}
E_1^{**} = \{E_1^{pq} := HC^p(\mathbb C[\Gamma_q])\Rightarrow HC^{p+q}([\Gamma\hskip-.03in .])\cong HC^{p+q}(\mathbb C[\pi])\}_{p,q\ge 0}
\end{equation}
collapses at the $E_2^{**}$-term.
\item The summand $H^{*-2m}(B\pi;\mathbb C)$ of $HC^*(\mathbb C[\pi])_{<1>}$ identifies with the $E_2^{2m+1,*-1}$-term of this spectral sequence.
\end{itemize}
\vskip.2in

Note also that analogous spectral sequences exist in Hochschild homology and cohomology:
\begin{gather*}
E^1_{**} = \{E^1_{pq} := HH_p(\mathbb C[\Gamma_q])\Rightarrow HH_{p+q}([\Gamma\hskip-.03in .])\cong HH_{p+q}(\mathbb C[\pi])\}_{p,q\ge 0}\\
E_1^{**} = \{E_1^{pq} := HH^p(\mathbb C[\Gamma_q])\Rightarrow HH^{p+q}([\Gamma\hskip-.03in .])\cong HH^{p+q}(\mathbb C[\pi])\}_{p,q\ge 0}
\end{gather*}
and that these spectral sequences in both Hochschild and cyclic (co)homology exist for arbitrary simplicial groups $\Gamma\hskip-.03in$. Moreover, when $\Gamma\hskip-.03in .$ is degreewise free, (but not necessarily a resolution of $\pi$), the description of the $E^2_{**}$-term in cyclic homology given above in (\ref{diagram2}) still applies. In this case, however, the spectral sequence need not collapse at the $E^2_{**}$-term, and will converge to $HC_*(\mathbb C[\Gamma\hskip-.03in .])$ which may not equal $HC_*(\mathbb C[\pi])$.
\vskip.2in

From these considerations, one sees that an arbitrary $x\in H_n(B\pi;\mathbb C)$ admits a unique representative as 
\begin{equation}\label{eqn:xrep}
x\in E^2_{1,n-1}= ker\left(HC_1(\mathbb C[\Gamma_{n-1}])\to HC_1(\mathbb C[\Gamma_{n-2}])\right)/im\left(HC_1(\mathbb C[\Gamma_{n}])\to HC_1(\mathbb C[\Gamma_{n-1}])\right)
\end{equation}

Suppose now that $\Gamma\hskip-.03in .$ is a type $P$ resolution of a finitely-presented group $\pi$. By Theorems \ref{thm:ell1res} and \ref{thm:CMres} above, one has corresponding spectral sequences in topological cyclic homology:
\begin{gather}\label{eqn:typePcyclic}
E^1_{**} = \{E^1_{p,q} = HC_p^t(H^{1,\infty}_{L_q}(\Gamma_q)) \Rightarrow HC_{p+q}^t(H^{1,\infty}_{L\hskip-.015in .}(\Gamma\hskip-.03in .)) \cong HC_{p+q}^t(H^{1,\infty}_L(\pi))\}_{p,q\ge 0}
\\
E^1_{**} = \{E^1_{p,q} = HC_p^t(H^{CM}_{L\hskip-.015in}(\Gamma\hskip-.03in .)) \Rightarrow HC_{p+q}^t(H^{CM}_{L\hskip-.015in .}(\Gamma\hskip-.03in .)) \cong HC_{p+q}^t(H^{CM}_L(\pi))\}_{p,q\ge 0}
\end{gather}
Given an element $0\ne x$ as in (\ref{eqn:xrep}) above, one can choose a representative $x\in H_1(B\Gamma_{n-1};\mathbb C)\cong HC_1(\mathbb C[\Gamma_{n-1}])$, and attempt to show that
\begin{itemize}
\item its image in $HC_1^t(H^{1,\infty}_{L_{n-1}}(\Gamma_{n-1}))$ survives to $E^{\infty}_{1,n-1}$, or even more, that
\item its image in $HC_1^t(H^{CM}_{L\hskip-.015in .}(\Gamma\hskip-.03in .)_{n-1})$ survives to $E^{\infty}_{1,n-1}$.
\end{itemize}

As we shall see, verifying the former for all (rational) homology classes implies rational injectivity of the assembly map for $K_*^t(\ell^1(\pi))$, while verifying the latter for all such classes implies rational injectivity of the assembly map for $K_*^t(C^*_r(\pi))$. However, even though these spectral sequences exist (and are a prime motivation for what follows), working with them directly to verify injectivity for homology classes for which injectivity was not already known is problematic. The first complicating factor is the size of the group $\Gamma\hskip-.03in .$, which affects both spectral sequences. An additional issue affecting the second spectral sequence is its lack of functoriality with repsect to simplicial group homomorphisms, as we have discussed above. In the next section, we introduce a technique which gets around the first problem, culminating in the proof of Theorem A. And, with $H^{CM}_{m}(\Gamma\hskip-.03in .)$ in place of $H^{CM}_{L\hskip-.015in .}(\Gamma\hskip-.03in .)$, it yields as well a reduction of the Strong Novikov Conejcture to a conjecture about the topological cyclic homology group $\ov{HC}_1^t(H^{CM}_m(F))$, where $F$ is a finitely-generated free group.
\vskip.5in


\subsection{The local Chern character associated to an integral homology class}
\vskip.3in

Suppose now that $\pi$ is finitely presented. In this case we can construct an augmented simplicial resolution $\Gamma\hskip-.03in .^+$ of $\pi$ of type $HF^2$; in other words, a p-bounded resolution degreewise free above dimension $-1$, with $\pi_0(\Gamma\hskip-.03in .) = \Gamma_{-1} = \pi$ and $\Gamma_i$ finitely generated for $i = 0,1$. As is true for any resolution, $H_*(B\pi;\mathbb C) = H_*(D_*\otimes\mathbb C,d_*)$ for $*\ge 1$, where $D_n = H_1(B\Gamma_{n-1};\mathbb Z)$, with differential induced by the alternating sum of the boundary maps in homology. This complex can be defined for any simplicial group (not just free resolutons); for that reason when we want to specify the dependence on the group $\Gamma\hskip-.03in .$, we will write $D_*(\Gamma\hskip-.03in .)$ rather than just $D_*$. As in [O1], we assume $\Gamma\hskip-.03in .$ comes equipped with a graded generating set $\mathbb X\hskip-.015in .$ which is closed under degeneracies, and which (via the finite presentability of $\pi$) we may arrange to be finite in dimensons $0$ and $1$.
\vskip.2in

In what follows, an {\it integral} homology class $x\in H_n(B\pi;\mathbb C)$, $n\ge 1$, will refer to an element in the image of the map $H_n(B\pi;\mathbb Z)\to H_n(B\pi;\mathbb C)$ induced by the inclusion of coefficients $\mathbb Z\hookrightarrow \mathbb C$. Via the identification $H_*(B\pi;\mathbb Z)\cong H_n(D_*(\Gamma\hskip-.03in .),d_*)$, we may choose a preimage $\tilde{x}\in\Gamma_{n-1}$ of $x\in H_n(D_*(\Gamma\hskip-.03in .),d_*)$ mapping to a cycle representing $x$ under the projection $\Gamma_{n-1}\twoheadrightarrow H_1(B\Gamma_{n-1};\mathbb Z)\to H_1(B\Gamma_{n-1};\mathbb C)$; from this element we will construct an augmented simplicial subgroup $\Gamma(\tilde{x})\hskip-.015in .^+\subseteq \Gamma\hskip-.03in .^+$. We begin by defining a graded set $S(\tilde{x})\hskip-.015in .$
\begin{itemize}
\item For $j = 0,1$, $S(\tilde{x})_j = \mathbb X_j$;
\item If $n\ge 3$, $S(\tilde{x})_{n-1}$ contains a single element $s_{\tilde{x}}$ corresponding to $\tilde{x}$;
\item $S(\tilde{x})_{n-2}$ contains every element of $\mathbb X_{n-2}$ used in writing $\prod_{0\le i\le n-1}\left(\partial_i(\tilde{x})\right)^{(-1)^i}$ as a product of commutators in $\Gamma_{n-2}$;
\item For all $2 \le j < n-2$, $S(\tilde{x})_j$ is the minimal subset of $\mathbb X_j$ containing all generators occuring in the unique reduced word representing $\partial_I(y)$, where $\partial_I(_-)$ denotes an iterated face map beginning either in dimension $n-1$ or $n-2$ and $y\in S(\tilde{x})_{n-1}\coprod S(\tilde{x})_{n-2}$.
\end{itemize}

Now define $\mathbb X(\tilde{x})\hskip-.015in .$ to be the closure of $S(\tilde{x})\hskip-.015in .$ under degeneracies, and $\Gamma(\tilde{x})\hskip-.015in .$ to be the free simplicial group generated by $\mathbb X(\tilde{x})\hskip-.015in .$ There is a canonical injection
\[
\Gamma(\tilde{x})\hskip-.015in .\hookrightarrow \Gamma\hskip-.03in .
\]
which sends $s_{\tilde{x}}$ to $\tilde{x}$, and on other generators is the identity map. In fact, one can define the simplicial structure on
$\Gamma(\tilde{x})\hskip-.015in .$ to be that making this inclusion of graded sets a simplicial map. We extend $\Gamma(\tilde{x})\hskip-.015in .$ to an augmented simplicial object by setting $\Gamma(\tilde{x})_{-1} = \pi$. The following proposition lists the essential properties of $\Gamma(\tilde{x})\hskip-.015in .$

\begin{proposition}\label{prop:localprops} For any element $\tilde{x}$,
\begin{itemize}
\item $\Gamma(\tilde{x})_i = \Gamma_i$ for $-1\le i\le 1$;
\item $\Gamma(\tilde{x})\hskip-.015in .$ is free and finitely generated in each degree;
\item For all $n\ge 1$ there is an integral \lq\lq fundamental\rq\rq homology class
$\mu_{\tilde{x}}\in H_n(B\Gamma(\tilde{x})\hskip-.015in .;\mathbb Z)$.
Moreover, we have $\mu_{\tilde{x}}\mapsto x$ under the composition in homology
\[
H_n(B\Gamma(\tilde{x})\hskip-.015in .;\mathbb Z)\to H_n(B\Gamma(\tilde{x})\hskip-.015in .;\mathbb C)\to H_n(B\Gamma\hskip-.03in .;\mathbb C)\cong H_n(B\pi;\mathbb C)
\]
If $n\ge 3$, $H_n(B\Gamma(\tilde{x})\hskip-.015in .;\mathbb Z)\cong \mathbb Z$, generated by $\mu_{\tilde{x}}$.
\item In the spectral sequences 
\begin{gather*}
E^1_{pq} = HC_p(\mathbb C[\Gamma(\tilde{x})_q])\Rightarrow HC_{p+q}(\mathbb C[\Gamma(\tilde{x})\hskip-.015in .])\quad\text{resp\ .}\\
E_1^{pq} = HC^p(\mathbb C[\Gamma(\tilde{x})_q])\Rightarrow HC^{p+q}(\mathbb C[\Gamma(\tilde{x})\hskip-.015in .])
\end{gather*}
the element $\mu_{\tilde{x}}$ resp.\ its dual is represented by a permanent cycle in $E^{\infty}_{1,n-1}$ resp.\ $E_{\infty}^{1,n-1}$ which is non-zero if $0\ne x\in H_n(B\pi)$.
\end{itemize}
\end{proposition}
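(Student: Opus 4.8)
The four assertions fall into two groups: the first two are formal consequences of the definition of $\Gamma(\tilde{x})$, and I would dispatch them first. For the first bullet, recall that $S(\tilde{x})_j=\mathbb X_j$ for $j=0,1$ and that $\mathbb X(\tilde{x})$ is the degeneracy-closure of $S(\tilde{x})$; since degeneracies raise degree and $\mathbb X$ is itself degeneracy-closed, one gets $\mathbb X(\tilde{x})_j=\mathbb X_j$ for $j=0,1$, whence $\Gamma(\tilde{x})_j=\Gamma_j$ for $-1\le j\le 1$ (the case $j=-1$ being true by definition). For the second bullet, freeness in each degree is immediate because $\Gamma(\tilde{x})$ is by construction the free simplicial group on $\mathbb X(\tilde{x})$; finite generation reduces to the finiteness of each $S(\tilde{x})_j$. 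Degrees $0,1$ are finite by finite presentability of $\pi$, degree $n-1$ is a single generator, degree $n-2$ is a finite commutator-expression set, and the intermediate sets collect the finitely many generators appearing in finitely many iterated faces. Closing under the finitely many degeneracies landing in a fixed degree then keeps each $\mathbb X(\tilde{x})_j$ finite.

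For the third bullet I would compute $H_*(B\Gamma(\tilde{x});\mathbb Z)$ via the skeletal spectral sequence $E^1_{pq}=H_p(B\Gamma(\tilde{x})_q;\mathbb Z)$. Each $\Gamma(\tilde{x})_q$ being free, only the rows $p=0,1$ survive; the $p=0$ row contributes solely in total degree $0$, and the two-row shape forces collapse at $E^2$, so for $n\ge 2$ one obtains $H_n(B\Gamma(\tilde{x});\mathbb Z)\cong H_n(D_*(\Gamma(\tilde{x})),d_*)$ exactly as in (\ref{eqn:isom}). By Dold--Kan this is the homology of the normalized complex on the non-degenerate generators $S(\tilde{x})$. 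For $n\ge 3$ the unique non-degenerate generator in degree $n-1$ is $s_{\tilde{x}}$, and there are none in degrees $\ge n$, so the normalized complex terminates; moreover $d_n[s_{\tilde{x}}]=\sum_i(-1)^i[\partial_i s_{\tilde{x}}]$ vanishes, since by the choice of $S(\tilde{x})_{n-2}$ the word $\prod_{0\le i\le n-1}(\partial_i\tilde{x})^{(-1)^i}$ is already a product of commutators inside $\Gamma(\tilde{x})_{n-2}$ and hence dies in the abelianization $H_1(B\Gamma(\tilde{x})_{n-2})$. Thus $\mu_{\tilde{x}}:=[s_{\tilde{x}}]$ is a cycle generating $H_n\cong\mathbb Z$. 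Naturality of the inclusion $\Gamma(\tilde{x})\hookrightarrow\Gamma$, which sends $s_{\tilde{x}}$ to $\tilde{x}$, then carries $\mu_{\tilde{x}}$ to the class of $\tilde{x}$, i.e.\ to $x$ under $H_n(B\Gamma;\mathbb C)\cong H_n(B\pi;\mathbb C)$; for $n=1,2$ the same inclusion still exhibits a class mapping to $x$, though $H_n$ need not be cyclic.

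For the fourth bullet I would transport $\mu_{\tilde{x}}$ into the cyclic spectral sequence $E^1_{pq}=HC_p(\mathbb C[\Gamma(\tilde{x})_q])$. The analysis of (\ref{eqn:cyclic-free})--(\ref{diagram2}), valid for any degreewise-free simplicial group, identifies the $E^2$-term on the $p=1$ line with $H_*(D_*(\Gamma(\tilde{x})))$, so $E^2_{1,n-1}\cong\mathbb C\cdot\mu_{\tilde{x}}$ by the previous paragraph. Every differential $d^r$ with $r\ge 2$ leaving position $(1,n-1)$ lands in negative filtration $p=1-r<0$ and so vanishes, whence $\mu_{\tilde{x}}$ is a permanent cycle. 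It remains to see it is non-zero at $E^\infty$, i.e.\ not hit by an incoming $d^r$ from $(1+r,n-r)$. Here I would invoke the third bullet together with the conjugacy-class decomposition: the unshifted summand $H_n(B\Gamma(\tilde{x});\mathbb C)$ of $HC_n(\mathbb C[\Gamma(\tilde{x})])_{\langle 1\rangle}$ is carried precisely by the $p=1$ line, and the natural projection onto it factors through $E^\infty_{1,n-1}$. Since for $x\ne 0$ the map $H_n(B\Gamma(\tilde{x});\mathbb C)\to H_n(B\pi;\mathbb C)$ sends $\mu_{\tilde{x}}$ to $x\ne 0$ and is therefore injective on $\mathbb C\mu_{\tilde{x}}$, the class $\mu_{\tilde{x}}$ cannot become a boundary and survives nontrivially. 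The dual cohomological statement follows verbatim, applied to the cyclic cohomology spectral sequence and the continuous (or linear) duals of (\ref{eqn:isom}).

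The step I expect to be the main obstacle is the survival claim in the fourth bullet: unlike for a genuine resolution of $\pi$, the cyclic spectral sequence for $\Gamma(\tilde{x})$ need not collapse at $E^2$, so one cannot simply read off $E^\infty_{1,n-1}$. The resolution is exactly the rigidity supplied by the third bullet --- knowing in advance that the fundamental class lives in $H_n(B\Gamma(\tilde{x});\mathbb Z)\cong\mathbb Z$ and splits off the $\langle 1\rangle$-part of $HC_n$ forces $E^\infty_{1,n-1}$ to retain it. A secondary, purely bookkeeping obstacle is confirming that $\Gamma(\tilde{x})$ has no non-degenerate generators above degree $n-1$, so that the normalized $D$-complex genuinely terminates and the degree $n-2$ commutator relation is expressible inside $\Gamma(\tilde{x})_{n-2}$; both are guaranteed by the minimality built into the definition of $S(\tilde{x})$.
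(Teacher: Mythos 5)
Your handling of the first three bullets is correct and essentially the paper's own argument: the first two are read off from the construction, and for the third you pass to the normalized complex $\overline{D}_*(\Gamma(\tilde{x})\hskip-.015in .)$, observe that $[s_{\tilde{x}}]$ is a cycle precisely because of the commutator condition imposed on $S(\tilde{x})_{n-2}$, and conclude $H_n\cong\mathbb Z$ on $\mu_{\tilde{x}}=[s_{\tilde{x}}]$ for $n\ge 3$ since there are no non-degenerate generators above degree $n-1$.

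The fourth bullet is where there is a genuine gap, in two places. First, your higher differentials run the wrong way. Since $E^1_{pq}=HC_p(\mathbb C[\Gamma(\tilde{x})_q])$ with $d^1$ the simplicial differential (decreasing $q$, fixing $p$, as in (\ref{diagram1})--(\ref{diagram2})), the filtration is by simplicial degree and $d^r:E^r_{p,q}\to E^r_{p+r-1,q-r}$; indeed the paper later isolates $d^2_{0,n+1}:E^2_{0,n+1}\to E^2_{1,n-1}$ as the one possible incoming differential. So the differentials \emph{leaving} $(1,n-1)$ land at $(r,n-1-r)$, which is not excluded by any filtration reason, and your claim that permanence is automatic is unfounded (it is true, but needs an argument --- e.g.\ comparison under $I$ with the Hochschild spectral sequence, whose columns $p\ge 2$ vanish for degreewise free groups, or the comparison described next). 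Second, and more seriously, your non-vanishing argument rests on the assertion that the unshifted summand $H_n(B\Gamma(\tilde{x})\hskip-.015in .;\mathbb C)$ of $HC_n(\mathbb C[\Gamma(\tilde{x})\hskip-.015in .])_{<1>}$ ``is carried by the $p=1$ line and the natural projection onto it factors through $E^\infty_{1,n-1}$.'' For a degreewise free simplicial group that is \emph{not} a resolution no such Burghelea splitting of the abutment is available: as Section 4.1 explicitly warns, the spectral sequence need not collapse at $E^2$ and $HC_*(\mathbb C[\Gamma(\tilde{x})\hskip-.015in .])$ need not equal $HC_*(\mathbb C[\pi])$. Positing that the $p=1$ line splits off the abutment is essentially assuming the degeneration you are trying to establish at that position. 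The paper's proof goes the other way around: the inclusion $\Gamma(\tilde{x})\hskip-.015in .\hookrightarrow\Gamma\hskip-.03in .$ induces a map of spectral sequences which, by the third bullet, is injective on $E^2_{1,n-1}$; the target converges to $HC_*(\mathbb C[\pi])$ and collapses at $E^2$, so the image of $\mu_{\tilde{x}}$, namely $x\ne 0$, survives there, and naturality then forces $\mu_{\tilde{x}}$ to survive to a non-zero class in $E^{\infty}_{1,n-1}$ for $\Gamma(\tilde{x})\hskip-.015in .$ itself. You do write down the map $H_n(B\Gamma(\tilde{x})\hskip-.015in .;\mathbb C)\to H_n(B\pi;\mathbb C)$, but you use it only to see that $\mu_{\tilde{x}}\ne 0$, not to transport the survival question into the resolution's spectral sequence, which is the step that actually closes the argument.
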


\begin{proof}
The first two properties follow directly from the construction of $\Gamma(\tilde{x})\hskip-.015in .^+$. For the third, we observe that the image of $s_{\wt{x}}\in\Gamma(\wt{x})_{n-1}$ in $H_1(B\Gamma(\wt{x})_{n-1};\mathbb Z)$ is a cycle\footnote{The third defining property of the set $S(\tilde{x})$ guarantees that $[s_{\tilde{x}}]\in D_n(\Gamma(\tilde{x})\hskip-.015in .)$ is a cycle.} representing a canonically defined class $\mu_{\wt{x}}\in H_n(D(\Gamma(\wt{x})\hskip-.015in .;\mathbb Z)$. Now assume $n\ge 3$. The homology of $D_*(\Gamma(\tilde{x})\hskip-.015in .)$ can be computed by the normalized complex $\overline{D}_*(\Gamma(\tilde{x})\hskip-.015in .)$, in which degenerate elements have been identified with zero. In this normalized complex, $\overline{D}_m(\Gamma(\tilde{x})\hskip-.015in .)=0$ when $m > n$, $\overline{D}_n(\Gamma(\tilde{x})\hskip-.015in .)\cong \mathbb Z$ generated by the cycle $\mu_{\tilde{x}} := [s_{\tilde{x}}]$, and for $m < n$ is the free $\mathbb Z$-module with canonical basis consisting of the non-degenerate elements of $\mathbb X(\tilde{x})_m$. This identifies $H_n(D_*(\Gamma(\tilde{x})\hskip-.015in .);\mathbb Z)$ with $\mathbb Z$ on basis element $\mu_{\tilde{x}}$, which by construction maps to $x$ under the indicated map. The same result holds in cohomology, with an identification $H^n(D_*(\Gamma(\tilde{x})\hskip-.015in .);\mathbb Z)\cong\mathbb Z$ generated by the dual class $\mu_{\tilde{x}}^*$. We now consider the fourth claim. Because $\Gamma(\tilde{x})\hskip-.015in .$ is a free simplicial group, the cyclic homology spectral sequence converging to $HC_*(\mathbb C[\Gamma(\tilde{x})\hskip-.015in .])$ satisfies $E^2_{1,n-1}\cong H_n(D_*(\Gamma(\tilde{x})\hskip-.015in .)\otimes\mathbb C)$. By the third property, the inclusion $\Gamma(\tilde{x})\hskip-.015in .\hookrightarrow \Gamma\hskip-.03in .$ induces a map of cyclic homology spectral sequences which is an injection on the $E^2_{1,n-1}$-term. As $\Gamma\hskip-.03in .$ is both degreewise free and a resolution of $\pi$, the image of $\mu_{\tilde{x}}$ in the $E^2_{1,n-1}$-term for $HC_*(\mathbb C[\Gamma\hskip-.03in .])$ survives to $E^{\infty}_{1,n-1}$. But this implies it survives to $E^{\infty}_{1,n-1}$ in the cyclic homology spectral sequence converging to $HC_*(\mathbb C[\Gamma(\tilde{x})\hskip-.015in .])$. The dual argument applies in cyclic cohomology spectral sequence converging to $HC^*(\mathbb C[\Gamma(\tilde{x})\hskip-.015in .])$.
\end{proof}

In general, $\Gamma(\tilde{x})\hskip-.015in .$ will not be a resolution of $\pi = \pi_0(\Gamma(\tilde{x})\hskip-.015in .)$, let alone a type $P$ resolution, and so the simplicial projections $\mathbb C[\Gamma(\tilde{x})\hskip-.015in .]\surj \mathbb C[\pi]$, $H^{1,\infty}_{L\hskip-.015in .}(\Gamma(\tilde{x})\hskip-.015in .)\surj H^{1,\infty}_L(\pi)$ will not induce isomorphisms in (topological) cyclic homology. The following theorem allows us to bypass this issue as far as the construction of Chern characters are concerned.

\begin{theorem}\label{thm:factor} Suppose $A\hskip-.015in .^+$ is an augmented simplicial Fr\'echet algebra with $\pi_0(A\hskip-.015in .) = A_{-1}$, for which $A_{-1}$ is endowed with a (coarser) norm topology (in which it need not be complete). Then for $*\ge 1$ there is a factorization
\vskip.2in
\centerline{
\xymatrix{
& HC_*^t(A\hskip-.015in .)\ar[d]^{p_*}\\
K_*^t(\pi_0(A\hskip-.015in .))\ar@{-->}[ru]^{\widetilde{ch}_*^{CK}}\ar[r]^{ch_*^{CKT}} & HC_*^t(\pi_0(A\hskip-.015in .)^f)
}}
\vskip.2in
where $ch_*^{CKT}$ denotes the Connes-Karoub-Tillmann Chern character [constructed in Appendix]. Moreover, this factorization is functorial in $A$.
\end{theorem}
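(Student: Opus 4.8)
The plan is to build the dashed lift $\wt{ch}_*^{CK}$ directly out of the simplicial structure of $A\hskip-.015in .$, and to reconcile its topology with that of the target using the two inputs flagged in the Introduction: Baum retopologization (through Tillmann's Chern character for fine algebras) and Goodwillie's rigidity. First I would unwind the anatomy of $ch_*^{CKT}$ as constructed in the Appendix: it is the composite of the ordinary Connes--Karoubi Chern character $ch_*^{CK}:K_*^t(B)\to \ov{HC}_*^t(B)$, valued in the \emph{reduced continuous} cyclic homology for the given Fr\'echet topology, followed by Tillmann's retopologization, which upgrades the target to the fine theory $HC_*^t(B^f)$. The retopologization is legitimate precisely because Baum's theorem supplies $K_*^t(B^f)\cong K_*^t(B)$, so that $K$-classes are represented by finite data to which the algebraic (fine) cyclic complex applies. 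In particular, by the fine-topology isomorphisms of Section 2, the target $HC_*^t(\pi_0(A\hskip-.015in .)^f)$ is the algebraic cyclic homology $HC_*^a(\pi_0(A\hskip-.015in .))$, and $p_*$ is the natural projection of the simplicial theory onto this corner.

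Next I would produce the lift. Applying the continuous Connes--Karoubi character degreewise to $A\hskip-.015in .$ yields a map of simplicial graded groups $K_*^t(A\hskip-.015in .)\to \ov{HC}_*^t(A\hskip-.015in .)$ which assembles, on total complexes, to a homomorphism $\pi_*\underline{\underline{K}}^t(A\hskip-.015in .)\to HC_*^t(A\hskip-.015in .)$. The point is then to manufacture a compatible class in $\pi_*\underline{\underline{K}}^t(A\hskip-.015in .)$ out of an element of $K_*^t(\pi_0(A\hskip-.015in .))$. Here I would invoke the $K$-theory fibration sequence for a closed ideal recalled in the proof of Lemma 1, applied in the simplicial direction, together with Goodwillie's rigidity [G1]: since the augmentation $A\hskip-.015in .\to \pi_0(A\hskip-.015in .)$ is a $\pi_0$-isomorphism, the relative $K$-theory maps by a rational isomorphism $K_*^{\mathrm{rel}}\otimes\mathbb Q\cong HC_{*-1}^{\mathrm{rel}}\otimes\mathbb Q$ to relative cyclic homology (up to the usual shift), so the obstructions to lifting a $K$-class level by level reside in relative cyclic terms that are annihilated by $p_*$. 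This is exactly what permits the \emph{Chern character}, rather than the $K$-class itself, to be lifted, even though $A\hskip-.015in .$ is not assumed to be a $K$-theoretic resolution of its $\pi_0$ (whence also the restriction to $*\ge 1$, coming from the $\pi_0$-identification and Goodwillie's connectivity).

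Finally, commutativity $p_*\circ\wt{ch}_*^{CK}=ch_*^{CKT}$ and independence of the choices follow by naturality: the Connes--Karoubi character and Baum retopologization are natural transformations, so projecting the assembled class onto the $\pi_0$-corner recovers $ch_*^{CK}$ retopologized, i.e.\ $ch_*^{CKT}$, while two lifts differ by a relative class killed by $p_*$; functoriality in $A$ is then inherited step by step. \textbf{The main obstacle} I anticipate is not the formal assembly but the topological bookkeeping: the degreewise character lands in the reduced continuous cyclic homology for the Fr\'echet topology, whereas both $ch_*^{CKT}$ and the target are fine (algebraic), so the crux is verifying that the assembled lift genuinely descends into $HC_*^t(A\hskip-.015in .)$ and projects correctly onto $HC_*^a(\pi_0(A\hskip-.015in .))$ --- precisely the step where Baum retopologization and Goodwillie rigidity must be combined, rather than applied in isolation.
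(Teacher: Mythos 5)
You have correctly identified the three external inputs (Baum retopologization, Tillmann's fine Chern character, Goodwillie rigidity), but the way you assemble them diverges from the paper's proof at the crucial step, and the divergence opens a genuine gap. The paper never lifts $K$-theory classes from $\pi_0(A\hskip-.015in .)$ to the simplicial $K$-theory of $A\hskip-.015in .$; instead it routes the entire construction through the periodic theory $\underline{HPer}_*^x(A\hskip-.015in .) := \varprojlim_S HC_{*+2n}^x(A\hskip-.015in .)$. Concretely: Tillmann's character is first lifted to $\wt{ch}_*^T: K_*^t(\pi_0(A\hskip-.015in .)^f)\to \underline{HPer}_*^t(\pi_0(A\hskip-.015in .)^f) = \underline{HPer}_*^a(\pi_0(A\hskip-.015in .))$; Goodwillie's rigidity is then invoked in the form that the projection $p:A\hskip-.015in .\to\pi_0(A\hskip-.015in .)$ induces an \emph{isomorphism} $\underline{HPer}_*^a(A\hskip-.015in .)\overset{\cong}{\longrightarrow}\underline{HPer}_*^a(\pi_0(A\hskip-.015in .))$ of algebraic periodic cyclic homology; and the dashed arrow is defined by inverting this isomorphism and composing with the natural maps $\underline{HPer}_*^a(A\hskip-.015in .)\to HC_*^a(A\hskip-.015in .)\to HC_*^t(A\hskip-.015in .)$. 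Commutativity of the triangle and functoriality in $A\hskip-.015in .$ are then immediate from the naturality of each arrow.

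Your version instead tries to manufacture, from a class in $K_*^t(\pi_0(A\hskip-.015in .))$, a compatible class in $\pi_*\underline{\underline{K}}^t(A\hskip-.015in .)$, conceding that the obstructions to doing so live in relative terms that are ``annihilated by $p_*$.'' This does not yield a construction: if the $K$-class does not actually lift, you have produced no element of $HC_*^t(A\hskip-.015in .)$ to assign to it --- the vanishing of an obstruction after applying $p_*$ gives nothing in the target, whereas the statement requires a genuine homomorphism $\wt{ch}_*^{CK}$ into $HC_*^t(A\hskip-.015in .)$, not merely a consistency check after projecting. Moreover, the form of Goodwillie's theorem you invoke (relative $K$-theory rationally isomorphic to relative cyclic homology, with a shift) is a different result from the one the proof needs, namely the invariance of \emph{periodic} cyclic homology under the projection to $\pi_0$; it is the latter that makes the lift exist with no lifting in $K$-theory at all. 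The ``main obstacle'' you flag at the end --- reconciling the reduced continuous theory with the fine one --- is real, and it is exactly what the detour through $\underline{HPer}^a$ dissolves: for the fine topology the topological and algebraic periodic theories coincide, so the entire diagram stays algebraic until the final natural transformation $HC_*^a(A\hskip-.015in .)\to HC_*^t(A\hskip-.015in .)$.
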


\begin{proof} Define
\[
\underline{HPer}_*^x(A.) := \underset{S}{\varprojlim} HC_{*+2n}^x(A.),\quad x = a,t
\]
The lifting indicated in the above diagram is given by the following composition
\vskip.2in
\centerline{
\xymatrix{
K_*^t(\pi_0(A\hskip-.015in .))\ar@{-->}[dd]^{\widetilde{ch}_*^{CK}} &  K_*^t(\pi_0(A\hskip-.015in .)^f)\ar[l]^{\cong}\ar[d]^{\widetilde{ch}_*^T} & \\
& \underline{HPer}_*^t(\pi_0(A\hskip-.015in .)^f)\ar@{=}[r] & \underline{HPer}_*^a(\pi_0(A\hskip-.015in .)) \\
 HC_*^t(A\hskip-.015in .) &  HC_*^a(A\hskip-.015in .)\ar[l] &  \underline{HPer}_*^a(A\hskip-.015in .)\ar[l]\ar[u]^{\cong}
}}
\vskip.2in

As before, $B^f$ denotes the topological algebra $B$ equipped with the fine topology (done degreewise if $B$ is simplicial or augmented simplicial). The isomorphism in higher topological $K$-theory follows by Baum's retopologization theorem [Appendix]. The Chern character indicated by $\widetilde{ch}_*^T$ is the lifting of Tillmann's Chern character for fine topological algebras to $\underline{HPer}^t_*(_-)$ (which identifies with $\underline{HPer}^a_*(_-)$ when applied to topological algebras equipped with the fine topology). The projection map $p: A\hskip-.015in .\to \pi_0(A\hskip-.015in .)$ induces an isomorphism $\underline{HPer}_*^a(A\hskip-.015in .)\overset{\cong}{\longrightarrow} \underline{HPer}_*^a(\pi_0(A\hskip-.015in .))$ by the rigidity results of [G]. Finally, the bottom map in the lower left arises from the natural transformation $HC_*^a(_-)\to HC_*^t(_-)$. Each of the homomorphisms in the diagram is functorial in $A\hskip-.015in .$.
\end{proof}

\begin{corollary} For each $x\in H_n(B\pi;\mathbb C)$, with $x, \wt{x}$, and $\Gamma(\wt{x})\hskip-.015in .$ as above, there exist \lq\lq local\rq\rq Connes-Karoubi Chern characters
\begin{gather*}
\wt{ch}^{CK}_*: K_*^t(H^{1,\infty}_L(\pi))\to HC_*^t(H^{1,\infty}_{L\hskip-.015in .}(\Gamma(\wt{x})\hskip-.015in .))\\
\wt{ch}^{CK}_*: K_*^t(\pi_0(H^{CM}_m(\Gamma(\wt{x})\hskip-.015in .)^f))\to HC_*^t(H^{CM}_m(\Gamma(\wt{x})\hskip-.015in .))\\
\wt{ch}^{CK}_*:K_*^t(H^{CM}_L(\pi))\cong  K_*^t(\pi_0(H^{CM}_{L\hskip-.015in .}(\Gamma(\wt{x})\hskip-.015in .)))\to HC_*^t(H^{CM}_m(\Gamma(\wt{x})\hskip-.015in .))
\end{gather*}
\end{corollary}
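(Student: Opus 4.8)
The plan is to obtain all three characters as instances of the functorial factorization of Theorem~\ref{thm:factor}, each applied to an augmented simplicial Fr\'echet algebra manufactured from the local simplicial group $\Gamma(\wt{x})_\bullet$. Recall that theorem produces, for any augmented simplicial Fr\'echet algebra $A_\bullet^+$ with $\pi_0(A_\bullet)=A_{-1}$ and $A_{-1}$ carrying a coarser norm, a natural lift $\wt{ch}^{CK}_*\colon K_*^t(\pi_0(A_\bullet))\to HC_*^t(A_\bullet)$. So in each case the work reduces to three points: that the relevant completion of $\mathbb C[\Gamma(\wt{x})_\bullet]$ is an augmented simplicial Fr\'echet algebra, the identification $\pi_0(A_\bullet)=A_{-1}$, and a choice of coarser norm on $A_{-1}$ whose topological $K$-theory reproduces the stated source (using Baum retopologization to pass to the fine topology, exactly as in the proof of Theorem~\ref{thm:factor}).

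For the first character I take $A_\bullet=H^{1,\infty}_{L}(\Gamma(\wt{x})_\bullet)$, which is an augmented simplicial Fr\'echet algebra by Proposition~\ref{prop:l1-functor} and Theorem~\ref{thm:ell1res}. Since $\pi_0$ of a simplicial algebra depends only on its degree $0$ and $1$ terms and the two bottom face maps, and since $\Gamma(\wt{x})_i=\Gamma_i$ for $-1\le i\le 1$ by Proposition~\ref{prop:localprops} while $H^{1,\infty}_L(-)$ is a functor of the single group, we get $\pi_0(H^{1,\infty}_{L}(\Gamma(\wt{x})_\bullet))=\pi_0(H^{1,\infty}_{L}(\Gamma_\bullet))$; and because $\Gamma_\bullet$ is a type $P$ resolution, the continuous simplicial contraction produced in Theorem~\ref{thm:CMres} (and its $\ell^1$-analogue, Theorem~\ref{thm:ell1res}) forces $\pi_0=A_{-1}=H^{1,\infty}_L(\pi)$. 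Equipping $A_{-1}$ with the $\ell^1$-norm, whose completion is $\ell^1(\pi)$, and using that $H^{1,\infty}_L(\pi)$ is holomorphically closed in $\ell^1(\pi)$, identifies the source $K$-theory with $K_*^t(H^{1,\infty}_L(\pi))$, and Theorem~\ref{thm:factor} delivers the character. For the second character I take $A_\bullet=H^{CM}_m(\Gamma(\wt{x})_\bullet)$; each $\Gamma(\wt{x})_n$ is a finitely generated free group, hence an object of $(f.p.groups)$, so $A_\bullet$ is a simplicial Fr\'echet algebra by Proposition~\ref{prop:cmmax}, and I augment it by $A_{-1}:=\pi_0(A_\bullet)$, making $\pi_0(A_\bullet)=A_{-1}$ tautological. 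The coarser norm is pulled back along the canonical map to $C^*_m(\pi)$ of the final proposition of Section~3, which applies to $\Gamma(\wt{x})_\bullet$ and gives $K_*^t(\pi_0(H^{CM}_m(\Gamma(\wt{x})_\bullet)^f))\cong K_*^t(C^*_m(\pi))$, matching the source; Theorem~\ref{thm:factor} then gives the second character.

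For the third character the source isomorphism is the reduced counterpart of the computation just used: running the final proposition of Section~3 with $C^*_r$ and $H^{CM}_{L_\bullet}(\Gamma(\wt{x})_\bullet)$ in place of $C^*_m$ and $H^{CM}_m(\Gamma(\wt{x})_\bullet)$ yields $K_*^t(\pi_0(H^{CM}_{L_\bullet}(\Gamma(\wt{x})_\bullet)))\cong K_*^t(C^*_r(\pi))$, while $H^{CM}_L(\pi)$ is dense and holomorphically closed in $C^*_r(\pi)$ by Proposition~\ref{prop:cm}; together these give $K_*^t(H^{CM}_L(\pi))\cong K_*^t(\pi_0(H^{CM}_{L_\bullet}(\Gamma(\wt{x})_\bullet)))$. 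Applying Theorem~\ref{thm:factor} to $H^{CM}_{L_\bullet}(\Gamma(\wt{x})_\bullet)$ (augmented by its own $\pi_0$, with the $C^*_r$-norm) produces a character into $HC_*^t(H^{CM}_{L_\bullet}(\Gamma(\wt{x})_\bullet))$. To move this into the maximal target I use the degreewise comparison $\psi\colon H^{CM}_m(\Gamma(\wt{x})_\bullet)\to H^{CM}_{L_\bullet}(\Gamma(\wt{x})_\bullet)$ of simplicial Fr\'echet algebras, induced by restricting the maximal family of semi-norms to the simplicial structure maps and using that the reduced norm is the regular-representation summand of the maximal one. Since each $\Gamma(\wt{x})_n$ is free, Observation~\ref{obs:inf} and Lemma~\ref{lemma:bound} identify all of these completions as rapid-decay completions of $\mathbb C[\Gamma(\wt{x})_n]$, which are $\mathcal P$-isocohomological, so $\psi$ is a degreewise equivalence in topological cyclic homology and hence induces an isomorphism of the associated spectral sequences, giving an isomorphism $\psi_*\colon HC_*^t(H^{CM}_m(\Gamma(\wt{x})_\bullet))\xrightarrow{\cong}HC_*^t(H^{CM}_{L_\bullet}(\Gamma(\wt{x})_\bullet))$. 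Composing the source isomorphism, the character just built, and $\psi_*^{-1}$ gives the third character.

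The main obstacle is this last comparison. The identifications $\pi_0=A_{-1}$ and the norm computations of the previous two paragraphs I expect to be routine, resting only on the degree-$\le 1$ agreement of $\Gamma(\wt{x})_\bullet$ with the genuine resolution and on holomorphic closure. The genuine content is the claim that $\psi$ is a degreewise isomorphism in topological cyclic homology, i.e.\ that the maximal and the reduced rapid-decay completions of a finitely generated free group share the topological cyclic homology of the group algebra, with enough naturality to invert $\psi_*$. This is the one place where a real input about free groups is needed, and it is the finite-type, free-group shadow of exactly the kind of rigidity that the paper's reduction of the Strong Novikov Conjecture later isolates for general $H^{CM}_m(F)$.
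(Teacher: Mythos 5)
The paper offers no separate argument for this corollary --- it is meant to be read off from Theorem \ref{thm:factor} --- and your construction of the first two characters does exactly that, correctly: you take $A\hskip-.015in . = H^{1,\infty}_{L\hskip-.015in .}(\Gamma(\wt{x})\hskip-.015in .)$, with $\pi_0 = H^{1,\infty}_L(\pi)$ because $\Gamma(\wt{x})_i=\Gamma_i$ for $i\le 1$ and the $\ell^1$-norm as the coarser norm, and then $A\hskip-.015in . = H^{CM}_m(\Gamma(\wt{x})\hskip-.015in .)$ augmented by its own $\pi_0$ with the $C^*_m$-norm; these are the intended instances of the theorem.

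The genuine gap is in your third character. You build the character into $HC_*^t(H^{CM}_{L\hskip-.015in .}(\Gamma(\wt{x})\hskip-.015in .))$ and then compose with $\psi_*^{-1}$, where $\psi: H^{CM}_m(\Gamma(\wt{x})\hskip-.015in .)\to H^{CM}_{L\hskip-.015in .}(\Gamma(\wt{x})\hskip-.015in .)$, asserting that $\psi$ is a degreewise isomorphism on topological cyclic homology because each $\Gamma(\wt{x})_n$ is free. Nothing in the paper supports this. Observation \ref{obs:inf} and Lemma \ref{lemma:bound} compare completions built from a \emph{single} word length and the \emph{reduced} $C^*$-norm, whereas the maximal algebra of Definition \ref{def:maximal} is completed in the $C^*_m$-norm with seminorm families pulled back along \emph{all} homomorphisms to \emph{all} finitely presented groups; the Remark following Definition \ref{def:maximal} explicitly warns that when the indexing family of homomorphisms is infinite these constructions are \emph{not} equivalent. $\mathcal P$-isocohomologicality is a statement about polynomially bounded group cohomology and gives no control over $HC_*^t(H^{CM}_m(F))$; indeed, the behavior of $CC_*^t(H^{CM}_m(F))$ for $F$ free is precisely what the paper isolates as conjectural in \S 4.4, and a natural degreewise isomorphism of the kind you assert would essentially dispose of that conjecture. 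Note also that the third arrow cannot be a purely formal consequence of Theorem \ref{thm:factor}: the natural comparison maps ($H^{CM}_m\to H^{CM}_{L}$ on algebras, hence $K_*^t(\pi_0(H^{CM}_m(\Gamma(\wt{x})\hskip-.015in .)))\to K_*^t(\pi_0(H^{CM}_{L\hskip-.015in .}(\Gamma(\wt{x})\hskip-.015in .)))$ on $K$-theory) point the wrong way for a direct composition, so whatever bridge is used here must be supplied explicitly, and the one you propose is not available.
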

\vskip.5in


\subsection{Proof of Theorem A}
\vskip.3in
Let $\Gamma\hskip-.03in .$ be a free simplicial resolution of $\pi$. Fix an integral class $0\ne x\in im(H_n(B\pi;\mathbb Z)\to H_n(B\pi;\mathbb C))$, and let $\wt{x}, \Gamma(\wt{x})\hskip-.015in .$ be as defined above. By Proposition \ref{prop:localprops}, there exists an integral fundamental class $\mu_{\tilde{x}}\in im\left(H_n(B\Gamma(\wt{x})\hskip-.015in .;\mathbb Z)\to H_n(B\Gamma(\wt{x})\hskip-.015in .;\mathbb C)\right)$ mapping to $x\in H_n(B\pi;\mathbb C)$ under the map induced by $\Gamma(\wt{x})\hskip-.015in .\hookrightarrow \Gamma\hskip-.03in .\overset{\simeq}{\surj} \pi$, with $\mu_{\tilde{x}}$ canonically represented by an element in $E^1_{1,n-1} = HC_1(\mathbb C[\Gamma(\wt{x})_{n-1}])$ which survives to a non-zero element of $E^{\infty}_{1,n-1}$. Moreover, because $\Gamma(\wt{x})\hskip-.015in .$ is degreewise finitely generated, one may choose a \lq\lq dual\rq\rq integral fundamental class $\mu_{\tilde{x}}^*\in im(H^n(B\Gamma(\wt{x})\hskip-.015in .;\mathbb Z)\to H^n(B\Gamma(\wt{x})\hskip-.015in .;\mathbb C)$ represented by an element\footnote{For $n\ge 3$, the choice of representative $\mu^*_{\tilde{x}}\in HC^1(\mathbb C[\Gamma(\wt{x})_{(n-1)}])$ is essentially unique, while for $n=1,2$ is determined by the canonical basis for $H^1(B\Gamma(\wt{x})_{(n-1)};\mathbb C) \cong HC^1(\mathbb C[\Gamma(\wt{x})_{(n-1)}])$ coming from the generating set for $\Gamma(\wt{x})_{(n-1)}$} in $E_1^{1,n-1} = HC^1(\mathbb C[\Gamma(\wt{x})_{n-1}])\cong H^1(B\Gamma(\wt{x})_{(n-1)};\mathbb C)$ which survives to a non-zero element in $E_{\infty}^{1,n-1}$. Under the pairing $HC^*(\mathbb C[\Gamma(\wt{x})])\otimes HC_*(\mathbb C[\Gamma(\wt{x})])\to\mathbb C$, one has 
\[
<\mu_{\tilde{x}}^*,\mu_{\tilde{x}}> \ne 0
\]

Consider now the commutative diagram
\vskip1in
\hfill(D6)
\vskip-.8in
\centerline{
\xymatrix{
H_{n}(B\Gamma\hskip-.03in .;\mathbb Q)\ar[d]^{\cong}& H_n(B\Gamma(\tilde{x})\hskip-.015in .;\mathbb Q)\ar[l]\ar[d]  \\
 H_{n}(B\pi;\mathbb Q)\ar[d]^{{\cal A}_{\pi}}\ar@{>->}[r]& HC_n^a(\mathbb C[\Gamma(\tilde{x})\hskip-.015in .])\ar[d] \\
K_n^t(H^{1,\infty}_L(\pi))\otimes\mathbb Q\ar[r]^{\widetilde{ch}(\tilde{x})_n^{CK}}
& HC_n^t(H^{1,\infty}_{L\hskip-.015in .}(\Gamma(\tilde{x})\hskip-.015in .))
}}
\vskip.2in
where the arrow in the middle fits into the commuting diagram
\vskip.2in

\centerline{
\xymatrix{
 & K_*^t(\mathbb C[\pi]^f)\otimes\mathbb Q\ar[rd]^{ch_*^{T}} & \\
H_{n}(B\pi;\mathbb Q)\ar[ur]\ar@{>->}[r]\ar[dr] &
\underset{m\in\mathbb Z}{\overset{\phantom{m\in\mathbb Z}}{\oplus}} H_{n-2m}(B\pi;\mathbb Q)\ar@{>->}[r] &\underline{HPer}^a(\mathbb C[\pi])\\ & HC_*^a(\mathbb C[\Gamma(\tilde{x})\hskip-.015in .]) & \underline{HPer}^a_*(\mathbb C[\Gamma(\tilde{x})\hskip-.015in .])\ar[u]^{\cong}\ar[l]
}
}
\vskip.2in

Every rational homology class in $H_n(B\pi;\mathbb Q)$ is a scalar multiple of an integral class, so it suffices to verify injectivity for integral classes. Because the choice of integral class $x\in H_n(B\pi;\mathbb Q)$ is arbitrary, this argument shows that the restricted rationalized assembly map for $K_*^t(\mathbb C[\pi]^f)\otimes\mathbb Q$ is injective; a fact already known [T1]. Our object, then, is to show that the image of $x$ is non-zero in $HC_n^t(H^{1,\infty}_{L\hskip-.015in .}(\Gamma(\tilde{x})\hskip-.015in .))$. As a first step, we have
\vskip.2in

\begin{lemma}\label{lemma:survive} Let $0\ne x\in H_n(B\pi;\mathbb Q)$ be an integral class, with $\Gamma(\wt{x})\hskip-.015in .$ and $0\ne \mu_{\wt{x}}\in E^{1,a}_{1,n-1} = HC_1(\mathbb C[\Gamma(\wt{x})_{n-1}])$ defined as above. Then the image of $\mu_{\wt{x}}$ in $E^{1,t}_{1,n-1} = HC_1(H^{1,\infty}_{L_{st}}\Gamma(\wt{x})_{n-1}))$ 
(under the map induced by the inclusion of simplicial algebras $\mathbb C[\Gamma(\wt{x})\hskip-.015in .]\hookrightarrow H^{1,\infty}_{L\hskip-.015in .}(\Gamma(\wt{x})\hskip-.015in .)$) is non-zero, and survives to a non-zero element in $E^{2,t}_{1,n-1}$ .
\end{lemma}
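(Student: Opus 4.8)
The plan is to detect the image of $\mu_{\wt{x}}$ by pairing it against a \emph{continuous} cyclic cocycle dual to it, exploiting the fact that the cocycle representing $\mu_{\wt{x}}^*$ has only linear growth in word-length and therefore extends across the $\ell^1$-completion. Throughout I would work with the map of cyclic (co-)homology spectral sequences induced by the inclusion of augmented simplicial algebras $\mathbb C[\Gamma(\wt{x})\hskip-.015in .]\inj H^{1,\infty}_{L\hskip-.015in .}(\Gamma(\wt{x})\hskip-.015in .)$. Since $\Gamma(\wt{x})_{n-1}$ is finitely generated free, (\ref{eqn:cyclic-free}) gives $E^{1,a}_{1,n-1}=HC_1(\mathbb C[\Gamma(\wt{x})_{n-1}])\cong H_1(B\Gamma(\wt{x})_{n-1};\mathbb C)$, and this lies entirely in the $<1>$-summand where $\mu_{\wt{x}}$ and its dual live.

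First I would make the dual class explicit. Its linear dual is $\mathrm{Hom}(\Gamma(\wt{x})_{n-1},\mathbb C)$, so $\mu_{\wt{x}}^*$ is represented by the cyclic $1$-cocycle $\tau_\phi$ attached to a homomorphism $\phi:\Gamma(\wt{x})_{n-1}\to\mathbb C$; concretely $\tau_\phi$ is supported on the $<1>$-part, with $\tau_\phi(g_0,g_1)=\phi(g_0)$ when $g_0g_1=1$ and $0$ otherwise. As $\phi$ is additive on reduced words one has $|\phi(g)|\le C\,L_{st}(g)$, whence for $a_i=\sum_g\lambda_i(g)g$ the estimate $|\tau_\phi(a_0,a_1)|\le C\sum_g|\lambda_0(g)|(1+L_{st}(g))\,\sup_h|\lambda_1(h)|\le C\,\nu_{1,1}(a_0)\,\nu_{1,0}(a_1)$, using $L_{st}(g)=L_{st}(g^{-1})$. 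Thus $\tau_\phi$ is continuous for the projective Fr\'echet tensor product and extends to a class in $E_1^{1,n-1}=HC^1_t(H^{1,\infty}_{L_{n-1}}(\Gamma(\wt{x})_{n-1}))$ mapping to $\mu_{\wt{x}}^*$ under the comparison $HC^*_t\to HC^*_a$. This is the $\ell^1$-analogue of the Connes--Moscovici polynomial-extension mechanism, and is the one place where the $\ell^1$-structure is essential.

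Next I would assemble the pairing at the spectral-sequence level. By functoriality the inclusion above induces a morphism of cyclic cohomology spectral sequences compatible, page by page, with the duality pairing $HC^*_t\otimes HC^t_*\to\mathbb C$; in particular there is a pairing $E_r^{1,n-1}\otimes E^r_{1,n-1}\to\mathbb C$ with $d^r$ adjoint to $d_r$. Algebraically, Proposition~\ref{prop:localprops} and the chosen normalization give $\langle\mu_{\wt{x}}^*,\mu_{\wt{x}}\rangle\neq0$ with both classes surviving to $E_2^{1,n-1}$ resp.\ $E^2_{1,n-1}$. On the topological side, the image of $\mu_{\wt{x}}$ in $E^{1,t}_{1,n-1}=HC_1^t(H^{1,\infty}_{L_{n-1}}(\Gamma(\wt{x})_{n-1}))$ is a $d^1$-cycle, since the differential is the alternating sum of the continuous extensions of the simplicial face maps and $d^1\mu_{\wt{x}}=0$ already holds algebraically; dually, the extended cocycle $\tau_\phi$ produced above is a $d_1$-cocycle in the topological cohomology spectral sequence, hence defines a class in $E_2^{1,n-1}$.

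Evaluating the pairing then finishes both assertions. At the $E^1$-page the pairing of the extended $\tau_\phi$ with the image of $\mu_{\wt{x}}$ equals $\langle\mu_{\wt{x}}^*,\mu_{\wt{x}}\rangle\neq0$, which shows at once that the image of $\mu_{\wt{x}}$ is nonzero in $HC^t_1(H^{1,\infty}_{L_{n-1}}(\Gamma(\wt{x})_{n-1}))$, giving the first claim. Because one factor is a $d_1$-cocycle and the other a $d^1$-cycle, this pairing descends to a well-defined nonzero pairing $E_2^{1,n-1}\otimes E^{2,t}_{1,n-1}\to\mathbb C$; a homology class pairing nontrivially cannot vanish, so the image of $\mu_{\wt{x}}$ survives to a nonzero element of $E^{2,t}_{1,n-1}$, giving the second claim. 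The main obstacle is the continuous extendability of $\tau_\phi$ in the second paragraph: one must verify the explicit seminorm bound, confirm that the resulting functional is a genuine cyclic (not merely Hochschild) cocycle, and check that its class survives the simplicial $d_1$-differential to reach $E_2^{1,n-1}$. Everything else is the formal compatibility of the duality pairing with the algebraic-to-topological comparison of spectral sequences.
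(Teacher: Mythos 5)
Your strategy --- extend the dual class $\mu_{\wt{x}}^*$ to a \emph{continuous} cyclic $1$-cocycle on the $\ell^1$-completion and detect $\mu_{\wt{x}}$ by pairing --- is genuinely different from the paper's. The paper instead invokes the computation of [JOR1] to obtain, for every finitely generated free group $F$, a continuous map $HC_1^t(H^{1,\infty}_{L_{st}}(F))\to \overline{HC}_1^t(H^{1,\infty}_{L_{st}}(F))_{<1>} = H_1(BF;\mathbb C)$ splitting the inclusion from $HC_1(\mathbb C[F])$ and natural in free-group homomorphisms; naturality makes the splitting commute with $d^1$, so the map of $E^2$-terms is split injective and both claims follow at once. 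Your opening steps are correct: $\mu^*_{\wt{x}}$ is represented by $\tau_\phi$ for a homomorphism $\phi:\Gamma(\wt{x})_{n-1}\to\mathbb C$, the bound $|\tau_\phi(a_0,a_1)|\le C\,\nu_{1,1}(a_0)\,\nu_{1,0}(a_1)$ holds, $\tau_\phi$ extends continuously, and pairing the extension with the image of $\mu_{\wt{x}}$ proves non-vanishing in $E^{1,t}_{1,n-1}$. That part of the lemma is established.

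The gap is in the survival to $E^{2,t}_{1,n-1}$. Everything there rests on your assertion that the continuous extension of $\tau_\phi$ is a $d_1$-cocycle in the topological cohomology spectral sequence: if $\mu_{\wt{x}}=d^1y$ you need $\langle \tau_\phi, d^1 y\rangle = \langle d_1\tau_\phi, y\rangle = 0$, i.e.\ $d_1[\tau_\phi]=0$ in $HC^1_t(H^{1,\infty}_{L_{st}}(\Gamma(\wt{x})_n))$. But $\partial_i^*\tau_\phi$ is supported on pairs $(g_0,g_1)$ with $g_0g_1\in\ker\partial_i$, not only on $g_0g_1=1$; hence $\sum_i(-1)^i\partial_i^*\tau_\phi$ is not the zero cochain $\tau_{\sum(-1)^i\phi\circ\partial_i}$ but differs from it by continuous cyclic cocycles concentrated on the conjugacy-class summands $<g>$ with $1\ne g\in\ker\partial_i$. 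Algebraically these terms are coboundaries, but the algebraic witnesses need not be continuous, and proving that these inhomogeneous cocycles vanish in continuous cyclic cohomology is exactly the [JOR1] vanishing $\overline{HC}_1^t(H^{1,\infty}_{L_{st}}(F))_{<g>}\cong H_1(C_g/(g);\mathbb C)=0$ for $g\ne 1$ that the paper's proof uses. So the step you describe as a check to be "confirmed" is the crux of the lemma, not a verification; you have relocated the hard input rather than avoided it, and you misidentify the main obstacle (the seminorm estimate and the cyclicity of $\tau_\phi$ are the easy parts). With the [JOR1] computation supplied, your pairing argument closes; without it, the pairing does not descend past $E^1$.
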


\begin{proof} As $\Gamma(\tilde{x})_{n-1}$ is free and finitely generated, one has by [JOR1] that
\begin{gather*}
\overline{HC}_*^t(H^{1,\infty}_{L_{st}}(\Gamma(\tilde{x})_{n-1}))_{<1>}  \cong HC_*(\mathbb C[\Gamma(\tilde{x})_{n-1}])_{<1>} = H_*(B\Gamma(\tilde{x})_{n-1};\mathbb C)\\
\overline{HC}_*^t(H^{1,\infty}_{L_{st}}(\Gamma(\tilde{x})_{n-1}))_{<g>} \cong H_*(C_g/(g);\mathbb C) = 0,\quad g\ne 1, n\ge 1
\end{gather*}
For a given group $G$, the map induced by projections onto summands indexed by conjugacy classes 
\[
\overline{HC}_*^t(H^{1,\infty}_{L_{st}}(G))\to \underset{<g>}{\prod} \overline{HC}_*^t(H^{1,\infty}_{L_{st}}(G))_{<g>}
\]
is natural with respect to group homomorphisms. When $G = F$ is a finitely generated free group, the summands indexed by $<g>\ne 1$ vanish in positive dimensions (loc. cit.), resulting in a map
\begin{equation}\label{eqn:functorialsplit}
HC_1^t(H^{1,\infty}_{L_{st}}(F))\to \overline{HC}_1^t(H^{1,\infty}_{L_{st}}(F))\to \overline{HC}_1^t(H^{1,\infty}_{L_{st}}(F))_{<1>} = H_1(BF;\mathbb C)
\end{equation}
which is functorial with respect to homomorphisms of finitely generated free groups. The result is a commuting diagram
\vskip.2in
\centerline{
\xymatrix{
H_1(B\Gamma(\tilde{x})_n)\ar@2{-}[r]\ar[d]^{d_n} &  HC_1(\mathbb C[\Gamma(\tilde{x})_n])\phantom{x}\ar@{>->}[r]\ar[d]^{d^{1,HC}_{1,n}}& 
HC_1^t(H^{1,\infty}_{L_{st}}(\Gamma(\tilde{x})_n))\ar@{->>}[r]\ar[d]^{d^{1,HC}_{1,n}} & H_1(B\Gamma(\tilde{x})_n)\ar[d]^{d_n}\\
H_1(B\Gamma(\tilde{x})_{n-1})\ar@2{-}[r] & HC_1(\mathbb C[\Gamma(\tilde{x})_{n-1}])\phantom{x}\ar@{>->}[r]& 
HC_1^t(H^{1,\infty}_{L_{st}}(\Gamma(\tilde{x})_{n-1}))\ar@{->>}[r] & H_1(B\Gamma(\tilde{x})_{n-1})
}
}
\vskip.2in
by which we conclude that $\mu_{\wt{x}}$ survives to a non-zero class in $E^{2,t}_{1,n-1}$, as claimed.
\end{proof}
To complete the proof, we need to show $\mu_{\wt{x}}$ is not in the image of the differential $d^2_{0,n+1}:E^{2,t}_{0,n+1}\to E^{2,t}_{1,n-1}$. Apriori this is a delicate matter, because, although $E^{1,a}_{0,m}$ is dense in $E^{1,t}_{0,m}$ in the Fr\'echet topology, there is no reason to suppose that this remains true at the $E^2$-level. In fact, the exact relation between $E^{2,a}_{0,*}$ and $E^{2,t}_{0,*}$ for the above complex seems very difficult to determine. This motivates the consideration of an intermediate complex which we now define, first in the algebraic setting, then in the topological one.
\vskip.2in
Let $\Gamma\hskip-.03in .$ be a simplicial group, with $p_n:\Gamma_n\to\pi = \pi_0(\Gamma\hskip-.03in .)$ the canonical projection to $\pi$ (this is realized by any iteration of face maps to $\Gamma_0$, followed by the projection to $\pi$). For $n\ge 0$, we define the homogeneous component of $CC_*(\mathbb C[\Gamma_n])$ to be
\[
CC_*(\mathbb C[\Gamma_n])_h := \underset{p_n(g) = 1}{\underset{<g>\in <\Gamma_n>}{\oplus}} CC_*(\mathbb C[\Gamma_n])_{<g>}
\]
This subcomplex is preserved under face maps as $n$ varies, yielding the simplicial subcomplex
\[
CC_*(\mathbb C[\Gamma\hskip-.03in .])_h :=\{ [n]\mapsto CC_*(\mathbb C[\Gamma_n])_h\}_{n\ge 0}\subset CC_*(\mathbb C[\Gamma\hskip-.03in .])
\]

If $(\Gamma\hskip-.03in .,L\hskip-.015in .)$ is a p-bounded simplicial group with word-length, then  the decomposition of $CC_*^t(H^{1,\infty}_{L_n}(\Gamma_n))$ as a topological direct sum indexed on conjugacy classes (as described in \S 2) allows for the definition of a homogeneous component in an analogous fashion
\[
CC^t_*(H^{1,\infty}_{L_n}(\Gamma_n))_h := \underset{p_n(g) = 1}{\underset{<g>\in <\Gamma_n>}{\widehat{\oplus}}} CC^t_*(H^{1,\infty}_{L_n}(\Gamma_n))_{<g>}
\]
This subcomplex is preserved under face maps as $n$ varies, yielding the simplicial subcomplex
\[
CC^t_*(H^{1,\infty}_{L\hskip-.015in .}(\Gamma\hskip-.03in .))_h :=\{ [n]\mapsto CC^t_*(H^{1,\infty}_{L_n}(\Gamma_n))_h\}_{n\ge 0}\subset CC^t_*(H^{1,\infty}_{L\hskip-.015in .}(\Gamma\hskip-.03in .))
\]
Note that in both the algebraic and topological settings, the collection of projection maps $\{p_n\}_{n\ge 0}$ define (continuous) projections of simplicial (topological) chain complexes
\begin{gather*}
CC_*(\mathbb C[\Gamma\hskip-.03in .])\surj CC_*(\mathbb C[\Gamma\hskip-.03in .])_h\\
CC^t_*(H^{1,\infty}_{L\hskip-.015in .}(\Gamma\hskip-.03in .))\surj CC^t_*(H^{1,\infty}_{L\hskip-.015in .}(\Gamma\hskip-.03in .))_h
\end{gather*}
which are right inverses to the natural inclusions going the other way. 

\begin{lemma}\label{lemma:acyclic} Suppose $\Gamma\hskip-.03in .$ is a free simplicial resolution of $\pi$. Then the associated chain complex of the simplicial vector space $\{[n]\mapsto HC_0(\mathbb C[\Gamma_n])_h\}_{n\ge 0}$ is acyclic above dimension $0$. If $(\Gamma\hskip-.03in .,L\hskip-.015in .)$ is a type $P$ resolution of $(\pi,\ov{L})$, then the same is true for the associated chain complex of the simplicial topological vector space $\{[n]\mapsto HC_0(H^{1,\infty}_{L_n}(\Gamma_n))\}_{n\ge 0}$.
\end{lemma}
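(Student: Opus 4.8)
The plan is to compute the homology of the bottom row of the homogeneous cyclic spectral sequence by isolating its \emph{non-principal} part, where the spectral sequence collapses for dimension reasons. First I would identify, for each $n$, the group $HC_0(\mathbb C[\Gamma_n])_h$ with $\mathbb C[X_n]$, where $X_n$ is the set of $\Gamma_n$-conjugacy classes of elements lying in $\Gamma(\varepsilon)_n=\ker(p_n)$ (this is just the standard conjugacy-class description of $HC_0=HH_0=\mathbb C[\cdot]/[\cdot,\cdot]$, restricted to the classes with $p_n(g)=1$). Under the face maps $\{[n]\mapsto\mathbb C[X_n]\}$ is a simplicial vector space, and it splits as a direct sum of the constant subobject $\mathbb C\cdot[1]$ (the principal summand indexed by $<1>$) and the non-principal summand $\mathbb C[\ov X\hskip-.015in .]$ spanned by the classes $<g>$ with $1\ne g\in\Gamma(\varepsilon)_n$. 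Since the constant summand is acyclic above degree $0$, it suffices to prove that the associated chain complex of $\{[n]\mapsto\mathbb C[\ov X_n]\}$ is acyclic in \emph{all} degrees.

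For the non-principal summand I would invoke the computation (\ref{eqn:cyclic-free}): each $\Gamma_q$ is free, hence torsion-free, so every $1\ne g\in\Gamma_q$ has infinite order with $Z_g/\langle g\rangle$ finite, whence $HC_p(\mathbb C[\Gamma_q])_{<g>}=0$ for all $p\ge 1$. Consequently, in the homogeneous cyclic spectral sequence $E^1_{p,q}=HC_p(\mathbb C[\Gamma_q])_h\Rightarrow HC_{p+q}(\mathbb C[\Gamma\hskip-.03in .])_h\cong HC_{p+q}(\mathbb C[\pi])_{<1>}$, the principal $<1>$-summand is a subcomplex (the face maps respect the conjugacy decomposition and fix $<1>$), so the non-principal summand inherits a \emph{quotient} spectral sequence whose $E^1$-term is concentrated in the single row $p=0$, namely $E^1_{0,q}=\mathbb C[\ov X_q]$. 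A spectral sequence concentrated in one row degenerates at $E^2$, giving $H_n(\mathbb C[\ov X\hskip-.015in .])\cong HC_n(\mathbb C[\Gamma\hskip-.03in .])_{h,\ne 1}$. But $\Gamma\hskip-.03in .$ is a resolution, so this is the non-principal part of $HC_n(\mathbb C[\pi])_{<1>}$; and the homogeneous part of $\mathbb C[\pi]$ for the identity $\pi\to\pi$ is purely the $<1>$-summand, so the non-principal part is $0$. Hence $H_*(\mathbb C[\ov X\hskip-.015in .])=0$, and the algebraic case follows.

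The topological case runs along identical lines, replacing the algebraic spectral sequence by the one supplied by Theorem \ref{thm:ell1res} for the type $P$ resolution and working throughout with the reduced groups $\ov{HC}^t_*$, whose conjugacy-class projections are continuous. The needed input — that for a finitely generated free group $F$ one has $\ov{HC}^t_p(H^{1,\infty}_{L}(F))_{<g>}=0$ for $1\ne g$ and $p\ge 1$ — is exactly the computation of [JOR1] already quoted in the proof of Lemma \ref{lemma:survive}; this again confines the non-principal summand to the row $p=0$ and identifies its bottom-row homology with the non-principal part of $\ov{HC}^t_*(H^{1,\infty}_L(\pi))_{<1>}=0$. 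I expect the genuine difficulty to lie entirely on the topological side: one must check that the decomposition into a completed direct sum is compatible with the filtration of the resolution-type spectral sequence and with its convergence, that passing to reduced homology does not disturb the one-row collapse, and that $\ov{HC}^t_0(H^{1,\infty}_{L_n}(\Gamma_n))_h$ really is $\mathbb C[X_n]$ degreewise — the collapse argument itself being essentially formal once the positive-degree vanishing of the non-principal parts is in hand.
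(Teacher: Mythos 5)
Your overall strategy coincides with the paper's: decompose the homogeneous complex into the principal $<1>$-summand and its complement, use the freeness of each $\Gamma_q$ (via Burghelea's computation, so that $HC_p(\mathbb C[\Gamma_q])_{<g>}=0$ for $p\ge 1$ and $g\ne 1$) to confine the complementary summand's spectral sequence to the single row $p=0$, and conclude that the degree-zero row complex is acyclic because the total complex of the complement is. The identification $HC_0(\mathbb C[\Gamma_n])_h\cong\mathbb C[X_n]$, the splitting off of the constant summand $\mathbb C\cdot[1]$ (which accounts for the ``above dimension $0$'' in the statement), and the appeal to [JOR1] on the $\ell^1$ side all match what the paper does.

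There is, however, one step that does not follow as you have written it: the assertion that $HC_n(\mathbb C[\Gamma\hskip-.03in .])_{h,\ne 1}$ ``is the non-principal part of $HC_n(\mathbb C[\pi])_{<1>}$,'' hence zero. The isomorphism $H_*(Tot(CC_*(\mathbb C[\Gamma\hskip-.03in .])_h))\cong HC_*(\mathbb C[\pi])_{<1>}$ is induced by the augmentation, and the augmentation does \emph{not} separate principal from non-principal homogeneous classes: a class $<g>$ with $1\ne g\in\ker(p_n)$ is sent to $<1>$ in $\pi$, exactly as the principal class is. So the conjugacy-class decomposition of the total complex upstairs does not descend to a corresponding decomposition of the abutment, and one cannot read off the vanishing of the non-principal summand of the homology from the fact that $CC_*(\mathbb C[\pi])_h=CC_*(\mathbb C[\pi])_{<1>}$. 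What is needed --- and what the paper supplies at precisely this point --- is the separate claim that the principal subcomplex $\{[n]\mapsto CC_*(\mathbb C[\Gamma_n])_{<1>}\}_{n\ge 0}$ is \emph{itself} of resolution type, resolving $CC_*(\mathbb C[\pi])_{<1>}$. Granting that, the inclusion of the principal subcomplex into the homogeneous complex is a quasi-isomorphism of total complexes (both compute $HC_*(\mathbb C[\pi])_{<1>}$ compatibly with the augmentation), so the complementary direct summand is acyclic, and your one-row collapse then finishes the argument exactly as in the paper. The same repair, with the $\ell^1$-analogues of \emph{both} resolution claims (which is where the type $P$ hypothesis and the continuous simplicial contraction enter), is what is required on the topological side; the technical cautions you list there are apt but secondary to this point.
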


\begin{proof} We consider the algebraic case first. For such $\Gamma\hskip-.03in .$, the simplicial chain complex $CC_*(\mathbb C[\Gamma\hskip-.03in .])_h$ is of resolution type, resolving $CC_*(\mathbb C[\pi])_{<1>}$. However, this latter chain complex is also resolved by the simplicial chain complex $\{[n]\mapsto CC_*(\mathbb C[\Gamma_n])_{<1>}\}_{n\ge 0}$, which is a simplicial subcomplex of $CC_*(\mathbb C[\Gamma\hskip-.03in .])_h$. This implies the total complex of the quotient simplicial complex
\[
\ov{CC}_*(\mathbb C[\Gamma\hskip-.03in .])_h := \{[n]\mapsto \ov{CC}_*(\mathbb C[\Gamma_n])_h := {CC}_*(\mathbb C[\Gamma_n])_h/{CC}_*(\mathbb C[\Gamma_n])_{<1>}\}_{n\ge 0}
\]
is acyclic. Because $\Gamma\hskip-.03in .$ is degreewise free, in the simplicial spectral sequence converging to the homology of the total complex of $\ov{CC}_*(\mathbb C[\Gamma\hskip-.03in .])_h$ one has
\[
E^1_{p,q} = 
\begin{cases} 0\quad p > 0\\ \ov{HC}_0(\mathbb C[\Gamma_q])_h\quad p = 0
\end{cases}
\]
where $\ov{HC}_0(\mathbb C[\Gamma_q])_h = HC_0(\mathbb C[\Gamma_q])_h/HC_0(\mathbb C[\Gamma_q])_{<1>}$. But the complex $\{HC_0(\mathbb C[\Gamma_q])_{<1>}\cong \mathbb C\}_{q\ge 0}$ is acyclic above dimension $0$, with $H_0 =\mathbb C$, implying the stated result for the algebraic case.

When $\Gamma\hskip-.03in .$ is a type $P$ resolution of $\pi$, the same argument works for the simplicial $\ell^1$-rapid decay algebra $H^{1,\infty}_{L\hskip-.015in .}(\Gamma\hskip-.03in .)$. Namely, both the simplicial topological complex 
$CC^t_*(H^{1,\infty}_{L\hskip-.015in .}(\Gamma\hskip-.03in .))_h$ as well as its simplicial subcomplex $CC^t_*(H^{1,\infty}_{L\hskip-.015in .}(\Gamma\hskip-.03in .))_{<1>} := \{[n]\mapsto CC^t_*(H^{1,\infty}_{L_n}(\Gamma_n))_{<1>}\}_{n\ge 0}$ are resolutions of $CC_*^t(H^{1,\infty}_{\ov{L}}(\pi))_{<1>}$, implying that the quotient simplicial topological complex
\[
\ov{CC}^t_*(H^{1,\infty}_{L\hskip-.015in .}(\Gamma\hskip-.03in .))_h := \{[n]\mapsto \ov{CC}^t_*(H^{1,\infty}_{L_n}(\Gamma_n))_h := {CC}^t_*(H^{1,\infty}_{L_n}(\Gamma_n))_h/{CC}^t_*(H^{1,\infty}_{L_n}(\Gamma_n))_{<1>}\}_{n\ge 0}
\]
is acyclic. Again, the fact $\Gamma\hskip-.03in .$ is degreewise free implies (by [JOR1], [JOR3]) that in the simplicial spectral sequence converging to the homology of the total complex one has
\[
E^1_{p,q} = 
\begin{cases} 0\quad p > 0\\ \ov{HC}^t_0(H^{1,\infty}_{L_q}(\Gamma_q))_h\quad p = 0
\end{cases}
\]
where $\ov{HC}^t_0(H^{1,\infty}_{L_q}(\Gamma_q))_h = HC^t_0(H^{1,\infty}_{L_q}(\Gamma_q))_h/HC^t_0(H^{1,\infty}_{L_q}(\Gamma_q))_{<1>}$. But the complex $\{HC^t_0(H^{1,\infty}_{L_q}(\Gamma_q))_{<1>}\cong \mathbb C\}_{q\ge 0}$ is canonically isomorphic to its algebraic counterpart, and thus acyclic above dimension $0$, with $H_0 =\mathbb C$. The result follows for the topological case.
\end{proof}

\begin{theorem} For all $n\ge 1$ and integral classes $x\in H_n(B\pi;\mathbb Q)$, the class $\mu_{\tilde{x}}\in E^{2,t}_{1,n-1}$ survives to a non-zero element in $E^{3,t}_{1,n-1}$.
\end{theorem}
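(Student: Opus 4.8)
The plan is to establish survival to $E^{3,t}_{1,n-1}$ in two pieces: that $\mu_{\wt x}$ is a $d^2$-cycle, and that it is not a $d^2$-boundary. The spectral sequence in (\ref{eqn:typePcyclic}) is concentrated in the first quadrant with $d^r$ of bidegree $(r-1,-r)$, so the only differential landing in the spot $(1,n-1)$ is $d^2_{0,n+1}:E^{2,t}_{0,n+1}\to E^{2,t}_{1,n-1}$, and the only $d^2$ leaving it is $d^2_{1,n-1}:E^{2,t}_{1,n-1}\to E^{2,t}_{2,n-3}$. For the cycle condition I would argue by naturality of the algebraic-to-topological map of spectral sequences induced by $\mathbb C[\Gamma(\wt x)\hskip-.03in .]\hookrightarrow H^{1,\infty}_{L\hskip-.015in .}(\Gamma(\wt x)\hskip-.03in .)$: by Proposition \ref{prop:localprops} the class $\mu_{\wt x}$ is a \emph{permanent} cycle in the algebraic spectral sequence, so its image in $E^{2,t}_{1,n-1}$ (the class produced in Lemma \ref{lemma:survive}) is annihilated by every topological differential; in particular $d^2_{1,n-1}(\mu_{\wt x})=0$.

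The substance of the theorem is the boundary statement, and here the key device is the homogeneous decomposition. The conjugacy-class splitting of \S2 writes each topological cyclic complex $CC^t_*(H^{1,\infty}_{L_q}(\Gamma(\wt x)_q))$ as a completed direct sum of its homogeneous part (the summands $<g>$ with $p_q(g)=1$) and its complement; since $p_{q-1}\circ\partial_i=p_q$ for every face, this is a splitting of simplicial complexes and hence of the entire spectral sequence. As $\mu_{\wt x}$ lies in the $<1>$-summand, which is contained in the homogeneous part, it suffices to show that $\mu_{\wt x}$ is not in the image of $d^2_{0,n+1}$ restricted to the homogeneous summand.

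To control this homogeneous $E^{2,t}_{0,n+1}$-term --- which one cannot reach by comparison with the algebraic theory, precisely because density fails at the $E^2$-level --- I would pass to the ambient type $P$ resolution via the inclusion $\iota:\Gamma(\wt x)\hskip-.03in .\hookrightarrow\Gamma\hskip-.03in .$. Because the projection to $\pi$ restricts compatibly, $\iota$ carries homogeneous summands to homogeneous summands and induces a map of spectral sequences. Suppose $\mu_{\wt x}=d^2_{0,n+1}(\xi)$ with $\xi$ homogeneous. Applying $\iota_*$ gives $\iota_*\mu_{\wt x}=d^2_{0,n+1}(\iota_*\xi)$ in the spectral sequence for $H^{1,\infty}_{L\hskip-.015in .}(\Gamma\hskip-.03in .)$. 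But $\Gamma\hskip-.03in .$ is a genuine type $P$ resolution, so by Lemma \ref{lemma:acyclic} the homogeneous $HC_0$-complex is acyclic above degree $0$; thus $\iota_*\xi$ lies in $E^{2,t}_{0,n+1}(\Gamma\hskip-.03in .)_h=0$ and $\iota_*\mu_{\wt x}=0$.

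It remains to contradict this by showing $\iota_*\mu_{\wt x}\neq 0$ in $E^{2,t}_{1,n-1}(\Gamma\hskip-.03in .)$. Here I would use the functorial projection (\ref{eqn:functorialsplit}) for finitely generated free groups, which maps the $p=1$ row of the $\Gamma\hskip-.03in .$-spectral sequence to the complex $\{[q]\mapsto H_1(B\Gamma_q;\mathbb C)\}$. Because $\Gamma\hskip-.03in .$ is a resolution, the induced map on $E^2$ identifies the target at $(1,n-1)$ with $H_n(D_*(\Gamma\hskip-.03in .)\otimes\mathbb C)\cong H_n(B\pi;\mathbb C)$ via (\ref{eqn:isom}), and by construction $\iota_*\mu_{\wt x}$ maps to $x\neq 0$. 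This contradiction shows $\mu_{\wt x}\notin im(d^2_{0,n+1})$, completing the argument. The main obstacle is exactly the transfer in the third step: the homogeneous reduction is what makes Lemma \ref{lemma:acyclic} applicable, and passing to the large resolution $\Gamma\hskip-.03in .$ is what lets us conclude vanishing of a term for which we have no direct topological handle inside the small model $\Gamma(\wt x)\hskip-.03in .$.
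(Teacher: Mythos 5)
Your reduction to the homogeneous summand and your appeal to Lemma \ref{lemma:acyclic} for the ambient type $P$ resolution are exactly the two ingredients the paper uses, and your handling of the $d^2$-cycle condition by naturality from the algebraic spectral sequence is fine. The gap is in your final step: you need $\iota_*\mu_{\wt{x}}\neq 0$ in $E^{2,t}_{1,n-1}$ of the spectral sequence for $H^{1,\infty}_{L\hskip-.015in .}(\Gamma\hskip-.03in .)$, and the justification you offer does not apply to the full resolution. The retraction (\ref{eqn:functorialsplit}) and the identification $\ov{HC}^t_1(H^{1,\infty}_{L_{st}}(F))_{<1>}\cong H_1(BF;\mathbb C)$ are invoked in Lemma \ref{lemma:survive} only for the \emph{finitely generated} free groups $\Gamma(\wt{x})_q$, where $H_1$ is finite-dimensional and no completion occurs. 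The groups $\Gamma_q$ of a type $P$ resolution are countably generated free groups with proper (hence unbounded) word-length, so $\ov{HC}^t_1(H^{1,\infty}_{L_q}(\Gamma_q))_{<1>}$ is a Fr\'echet completion of $H_1(B\Gamma_q;\mathbb C)$ rather than $H_1(B\Gamma_q;\mathbb C)$ itself. Your target complex is therefore a completion of $D_*(\Gamma\hskip-.03in .)\otimes\mathbb C$, and a cycle representing $x$ could become the boundary of an ``infinite chain'' after completion; nothing identifies the homology of the completed complex with $H_n(B\pi;\mathbb C)$, nor guarantees that the map out of $H_n(B\pi;\mathbb C)$ is injective. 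This is precisely the difficulty the paper flags when it says that the relation between the algebraic and topological $E^2$-terms for the big resolution ``seems very difficult to determine'' and that working with the spectral sequence of $\Gamma\hskip-.03in .$ directly is problematic because of the size of the groups; if your step went through as stated, the local model $\Gamma(\wt{x})\hskip-.015in .$ would be unnecessary.

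The paper's way around this is to glue the two models at the chain level rather than push forward at the $E^2$ level: it constructs the hybrid subcomplex $D^t_{**}$ that agrees with the small, degreewise finitely generated complex $CC^t_{**}$ in columns $q\le n-1$ --- so that $E^2_{1,n-1}$ is literally the group already controlled by Lemma \ref{lemma:survive} --- and with the big homogeneous complex $CC^{t\prime}_{**}$ in columns $q\ge n+1$ --- so that Lemma \ref{lemma:acyclic} forces $E^2_{0,n+1}=0$ --- with correction terms at $q=n$ and $q=n+1$ ensuring that $\mathrm{im}(d^1_{1,n})$, and hence $E^2_{1,n-1}$, is unchanged by the enlargement. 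To repair your argument you would need either that device or an independent proof that the map $H_n(B\pi;\mathbb C)\to E^{2,t}_{1,n-1}(\Gamma\hskip-.03in .)$ does not annihilate $x$.
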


\begin{proof} We introduce the method of proof by first verifying the statement in the algebraic case. Of course, for the simplicial group algebra one can use direct knowledge of $HC_*(\mathbb C[\pi])$ to conclude that $\mu_{\tilde{x}}$ must survive, because it hits the image of $x\in H_n(B\pi;\mathbb Q)\to H_n(B\pi;\mathbb C)$, which is non-zero. The object is to find a proof of survivability which does not rely on this last fact. We assume $\Gamma\hskip-.03in .$ is a free simplicial resolution of $\pi$, with $\Gamma(\tilde{x})\hskip-.015in .$ as defined above. As previously noted, there is a projection of simplicial complexes
\[
\{[n]\mapsto CC_*(\mathbb C[\Gamma(\tilde{x})_n])\}_{n\ge 0}\surj 
CC^a_{**} := \{[n]\mapsto CC_*(\mathbb C[\Gamma(\tilde{x})_n])_h\}_{n\ge 0}
\]
The latter embeds in the simplicial complex $CC^{a\prime}_{**} := \{[n]\mapsto CC_*(\mathbb C[\Gamma_n])_h\}_{n\ge 0}$. We define a subcomplex $D^a_{**}\subset C^{a\prime}_{**}$ by
\[
D^a_{pq} = 
\begin{cases}
CC^a_{pq}\quad\text{  if } q\le n-1;\\
CC^{a\prime}_{pq}\text{ if either } q\ge n+2, \text{ or } p\ge 1\text{ and }q\ge n+1;\\
CC^a_{0(n+1)} + d^{0,a}_{0,n+2}(CC^{a\prime}_{0(n+2)}) + b_1(CC^{a\prime}_{1(n+1)})\text{ when } (p,q) = (0,n+1);\\
CC^a_{pn} +  d^0_p(n+1)(D^a_{p(n+1)})\text{ for } q=n, D^a_{p(n+1)}\text{ defined as above}.\\
\end{cases}
\]
It follows by Lemma \ref{lemma:acyclic} that in the spectral sequence converging to $H_*(D^a_{**})$ one has $E^2_{0,n+1} = 0$; in other words we have killed that group. But in the same spectral sequence it is easily seen that the inclusion $C^a_{**}\inj D^a_{**}$ induces an isomorphism on the $E^1_{1,n-1}$-term, as well as on $im(d^1_{1,n}:E^1_{1,n}\to E^1_{1,n-1})$ (note that the actual $E^1_{1,n}$-terms may differ, but the difference comes from the contribution of $d^0_{1(n+1)}(CC^{a\prime}_{1(n+1)})$, which must vanish under $d^1_{1,n}$). Thus the argument that $\mu_{\tilde{x}}$ survives to a non-zero element in $E^2_{1,n-1}$-term of the spectral sequence for $H_*(C^a_{**})$ implies the same for the corresponding spectral sequence for $H_*(D^a_{**})$. However, in this spectral sequence, the vanishing of the $E^2_{0,n+1}$-term implies $E^2_{1,n-1} = E^3_{1,n-1}$. Thus $\mu_{\tilde{x}}$ must survive to a non-zero element in this spectral sequence, in turn implying that it must also for $E^3_{1,n-1}$-term of the spectral sequence for $H_*(C^a_{**})$ from which it came.
\vskip.2in

We now extend this argument to the $\ell^1$-rapid decay case, following essentially the same procedure. For this case we assume $\Gamma\hskip-.03in .$ is a type $P$ resolution of $\pi$. The first step, as before, is to map by the projecton 
\[
\{[n]\mapsto CC^t_*(H^{1,\infty}_{L_n}(\Gamma(\tilde{x})_n))\}_{n\ge 0}\surj 
CC^t_{**} := \{[n]\mapsto CC^t_*(H^{1,\infty}_{L_n}(\Gamma(\tilde{x})_n))_h\}_{n\ge 0}
\]
The latter embeds in the simplicial complex $C^{t\prime}_{**} := \{[n]\mapsto CC^t_*(H^{1,\infty}_{L_n}(\Gamma_n))_h\}_{n\ge 0}$.  As in the algebraic case, we define a subcomplex $D^t_{**}\subset {CC^{a\prime}}_{**}$ by
\[
D^t_{pq} = 
\begin{cases}
CC^t_{pq}\quad\text{  if } q\le n-1;\\
CC^{t\prime}_{pq}\text{ if either } q\ge n+2, \text{ or } p\ge 1\text{ and }q\ge n+1;\\
CC^t_{0(n+1)} + d^{0,a}_{0,n+2}(CC^{t\prime}_{0(n+2)}) + b_1(CC^{t\prime}_{1(n+1)})\text{ when } (p,q) = (0,n+1);\\
CC^t_{pn} +  d^0_p(n+1)(D^t_{p(n+1)})\text{ for } q=n, D^t_{p(n+1)}\text{ defined as above}.\\
\end{cases}
\]
and argue exactly as before. Namely, in the spectral sequence converging to $H_*(D^t_{**})$ one has $E^2_{0,n+1} = 0$. Moreover, the inclusion $C^t_{**}\inj D^t_{**}$ induces an isomorphism on the $E^1_{1,n-1}$-term, as well as $d^1_{1,n}(E^1_{1,n})$. By Lemma \ref{lemma:survive}, $\mu_{\tilde{x}}$ survives to a non-zero element in $E^2_{1,n-1}$-term of the spectral sequence for $H_*(C^t_{**})$, which implies the same for the corresponding spectral sequence converging to $H_*(D^a_{**})$. However, as in the algebraic case, the vanishing of the $E^2_{0,n+1}$-term for this latter spectral sequence implies $E^2_{1,n-1} = E^3_{1,n-1}$. Thus $\mu_{\tilde{x}}$ survives to a non-zero element in this spectral sequence, implying that it must also for $E^3_{1,n-1}$-term of the spectral sequence for $H_*(C^t_{**})$.
\end{proof}
\vskip.2in
With this the proof of Theorem A is complete.
\vskip.5in


\subsection{Related results}
\vskip.3in

Recall that for any free simplicial group $\Gamma\hskip-.03in .'$ we have an equality $H^*(B\Gamma\hskip-.03in .';\mathbb Z) = H^*(D^*(\Gamma\hskip-.03in .',\mathbb Z))$, with the cohomology class $\mu^*_{\tilde{x}}$ represented by an integral cocycle in the (co)complex $D^*(\Gamma\hskip-.03in .',\mathbb Z)$. Fix a minimal simplicial abelian group model\footnote{Here we take $K(\mathbb Z,n-1)\hskip-.01in .$ to be the simplicial free abelian group which is trivial in dimensions $j < (n-1)$, is $\mathbb Z$ in dimension $(n-1)$, and in dimension $m > n$ is the free abelain group on generating set indexed by the iterated degeneracies from dimension $n$ to dimension $m$.} $K(\mathbb Z,n-1)\hskip-.01in .$ for the Eilenberg-MacLane space $K(\mathbb Z,n-1)$. If $\Gamma\hskip-.03in .'$ is not only degreewise free but also finitely generated, then using the cocomplex $D^*(\Gamma\hskip-.03in .',\mathbb Z)$ as in [O1, Appendix] we may realize any integral class $[c]\in H^n(B\Gamma\hskip-.03in .';\mathbb Z)$ via a simplicial group homomorphism
\[
\phi_c: \Gamma\hskip-.03in .'\to K(\mathbb Z,n-1)\hskip-.015in .
\]
which, by finite generation, is a (linearly bounded) homomorphism of simplicial objects in $(f.p.groups)$. The fundamental class $\iota^n\in H^n(BK(\mathbb Z,n-1)\hskip-.015in .;\mathbb C)\cong\mathbb C$ is represented by the generator of $H^1(B\mathbb Z;\mathbb C) = HH^1(\mathbb C[K(\mathbb Z,n-1)_{n-1}])_{<1>}\inj HH^1(\mathbb C[K(\mathbb Z,n-1)_{n-1}]) = E_{1,HH}^{1,n-1}$ which represents a permanent cycle in the Hochschild cohomology spectral sequence converging to $HH^*(\mathbb C[K(\mathbb Z,n-1)\hskip-.015in .])$. That it is non-zero follows from the fact it pairs non-trivially with the canonical dual generator $\iota_n\in H_1(B\mathbb Z;\mathbb C) = HH_1(\mathbb C[K(\mathbb Z,n-1)_{n-1}])_{<1>}\inj HH_1(\mathbb C[K(\mathbb Z,n-1)_{n-1}]) = E^{1,HH}_{1,n-1}$ representing a permanent non-zero cycle in the Hochschild homology spectral sequence converging to $HH_*(\mathbb C[K(\mathbb Z,n-1)\hskip-.015in .])$.
\vskip.2in

In particular, for any integral class $x\in H_n(B\pi;\mathbb Z)$, the complexified cohomology class $\mu^*_{\tilde{x}}\in H^n(B\Gamma(\tilde{x})\hskip-.015in .;\mathbb C)$ admits a representation by a simplicial group homomorphism
\[
\phi_{\tilde{x}}: \Gamma(\tilde{x})\hskip-.015in .\to K(\mathbb Z,n-1)\hskip-.015in .
\]
with $\phi_{\wt{x}}^*(\iota^n) = \mu_{\wt{x}}^*$.
In order to determine the effect of this simplicial homomorphism in algebraic and topological Hochschild cohomology, we will need

\begin{lemma}\label{lemma:equiv} Fix $n\ge 2$. Set $A\hskip-.015in . = \mathbb C[K(\mathbb Z,n-1)\hskip-.015in .], B\hskip-.015in . = H^{1,\infty}_{L\hskip-.015in .}(K(\mathbb Z,n-1)\hskip-.015in .), C\hskip-.015in . = H^{CM}_m(K(\mathbb Z,n-1)\hskip-.015in .)$. Each of these algebras is augmented over $\mathbb C$; denote by $\widehat{A\hskip-.015in .}, \widehat{B\hskip-.015in .},\widehat{C\hskip-.015in .}$ the respective completions with respect to the powers of their augmentation ideal. The passage to completions yields a commuting diagram of  simplicial weak equivalences and isomorphims
\vskip.2in
\centerline{
\xymatrix{
A\hskip-.015in .\ar@{>->}[r]\ar[d]^{\simeq} & B\hskip-.015in . \ar@2{-}[r]\ar[d]^{\simeq} & C\hskip-.015in .\ar[d]^{\simeq}\\
\widehat{A\hskip-.015in .}\ar@2{-}[r] & \widehat{B\hskip-.015in .}\ar@2{-}[r] & \widehat{C\hskip-.015in .}
}
}
\vskip.2in
\end{lemma}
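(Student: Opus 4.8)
The plan is to verify the three features of the diagram separately and degreewise, then assemble them. Since every construction is applied degreewise, it suffices to treat a single group $K(\mathbb{Z},n-1)_m$, which by the footnote's description of the minimal model is a finitely generated free abelian group $\mathbb{Z}^{r_m}$; such a group has polynomial growth, hence is RD with the (nice) word-length $L_{st}$.

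First I would establish the top row $B\hskip-.015in . = C\hskip-.015in .$. For a polynomial-growth group the elementary comparison of the weighted $\ell^1$- and $\ell^2$-norms (absorbing the polynomial volume growth into the weight $(1+L)^s$) shows $H^{1,\infty}_L = H^{2,\infty}_L$, while the chain of inclusions $\mathbb{C}[\pi]\hookrightarrow H^{1,\infty}_L \hookrightarrow H^{CM}_L\hookrightarrow H^{2,\infty}_L$ from Section 3.2 together with $H^{CM}_L=H^{2,\infty}_L$ for RD groups (the refinement in Lemma \ref{lemma:bound}) collapses everything to the Schwartz algebra $\mathcal{S}(\mathbb{Z}^{r_m})$. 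For the maximal algebra $C_m = H^{CM}_m(\mathbb{Z}^{r_m})$ I would note $\mathbb{C}[\mathbb{Z}^{r_m}]\subseteq H^{CM}_m\subseteq H^{CM}_L=\mathcal{S}$ (the second inclusion because requiring $(\partial^f_L)^k$ bounded for all $f$ is stronger than for $f=\mathrm{id}$), and for the reverse inclusion observe that every homomorphism $f$ is linearly bounded, so $\|(\partial^f_L)^i(a)\|_m \le \sum_g|a_g|\,L'(f(g))^i$ is finite for Schwartz $a$; hence $\mathcal{S}\subseteq H^{CM}_m$ and $B_m=C_m=\mathcal{S}(\mathbb{Z}^{r_m})$.

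Next, the bottom row. All three augmentation filtrations have the same associated graded: for $G=\mathbb{Z}^{r_m}$ one has $\mathrm{gr}_I \mathbb{C}[G]\cong \mathrm{Sym}^*(G\otimes\mathbb{C})$, and the inclusions into $\mathcal{S}(G)$ induce isomorphisms on associated gradeds. This is most transparent via Fourier transform: $\mathbb{C}[G]\cong\mathcal{O}(\mathbb{T}^{r_m})$ and $\mathcal{S}(G)\cong C^\infty(\mathbb{T}^{r_m})$, the augmentation is evaluation at a point $p$, and both augmentation-ideal completions are the completed local ring $\mathbb{C}[[x_1,\dots,x_{r_m}]]$ at the smooth point $p$. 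Therefore $\widehat{A_m}\cong\widehat{B_m}\cong\widehat{C_m}$, and naturality of all identifications in the face and degeneracy maps upgrades this to the simplicial isomorphisms $\widehat{A\hskip-.015in .}\cong\widehat{B\hskip-.015in .}\cong\widehat{C\hskip-.015in .}$.

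The main work, and the expected obstacle, is the vertical maps: that $A\hskip-.015in .\to\widehat{A\hskip-.015in .}$ (and hence, by the identical argument, $B\hskip-.015in .\to\widehat{B\hskip-.015in .}$ and $C\hskip-.015in .\to\widehat{C\hskip-.015in .}$) is a simplicial weak equivalence. Here I would use Dold--Thom to identify $\pi_*(A\hskip-.015in .) = \pi_*(\mathbb{C}[K(\mathbb{Z},n-1)\hskip-.015in .]) \cong H_*(K(\mathbb{Z},n-1);\mathbb{C})$, the free graded-commutative algebra on one class in degree $n-1$ (polynomial when $n-1$ is even, exterior when $n-1$ is odd); in particular $\pi_*(A\hskip-.015in .)$ is nonzero only in degrees that are multiples of $n-1$, with a one-dimensional contribution from $\mathrm{Sym}^j$ in degree $j(n-1)$. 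The $I$-adic filtration has associated graded $\mathrm{Sym}^*(K(\mathbb{C},n-1)\hskip-.015in .)$, and since $\mathrm{Sym}^j$ is an exact functor in characteristic $0$ (a retract of $\otimes^j$), Eilenberg--Zilber together with $S_j$-invariance give $\pi_*(\mathrm{Sym}^j(K(\mathbb{C},n-1)\hskip-.015in .)) = \mathrm{Sym}^j_{\mathrm{gr}}(\mathbb{C}[n-1])$, matching the abutment dimension-by-dimension and forcing the filtration spectral sequence to collapse. Crucially, each homological degree receives a contribution from a single filtration weight, so the $I$-adic filtration is finite in every fixed degree; this makes $\pi_d(I^j\hskip-.015in .)=0$ for $j$ large, so the towers $\{\pi_d(A\hskip-.015in ./I^j)\}_j$ stabilize, $\varprojlim^1$ vanishes in the Milnor sequence, and $\pi_d(A\hskip-.015in .)\xrightarrow{\cong}\pi_d(\widehat{A\hskip-.015in .})$. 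The delicate issue---exactly the phenomenon flagged just before the lemma, where $E^2_{0,*}$ for the algebraic and topological complexes need not agree---is precisely this degreewise finiteness: it is what guarantees that completion neither creates nor destroys homotopy, and what must be checked with care is that the strict degreewise symmetric powers compute the derived ones and that all of the above identifications are simplicial rather than merely degreewise.
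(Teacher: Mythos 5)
Your proof is correct and follows essentially the same strategy as the paper's: polynomial growth of each $K(\mathbb Z,n-1)_m$ collapses the top row to $B\hskip-.015in . = C\hskip-.015in .$, finite-dimensionality of the successive quotients $I^j/I^{j+1}$ identifies the three $I$-adic completions, and convergence of the $I$-adic filtration (which the paper attributes to Curtis convergence using that the augmentation ideal is $0$-reduced, and which you re-derive explicitly via the collapse of the $\mathrm{Sym}^*$ spectral sequence and degreewise finiteness of the filtration) gives the vertical weak equivalences. Your Fourier-transform picture $\mathcal S(\mathbb Z^{r})\cong C^\infty(\mathbb T^{r})$ with common completion $\mathbb C[[x_1,\dots,x_{r}]]$ is a concrete realization of the paper's finite-codimension argument for $\widehat{A\hskip-.015in .}=\widehat{B\hskip-.015in .}$.
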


\begin{proof} As $K(\mathbb Z,n-1)\hskip-.015in .$ is a finitely generated free abelian group in each degree, it is degreewise of polynomial growth. For any group with word length $(G,L_{st})$ of polynomial growth, one has an equality $H^{1,\infty}_{L_{st}}(G) = H^{CM}_m(G)$, resulting in the equalities $B\hskip-.015in . = C\hskip-.015in .$, $\widehat{B\hskip-.015in .} = \widehat{C\hskip-.015in .}$ The equivalence $A\hskip-.015in .\to \widehat{A\hskip-.015in .}$ follows by Curtis convergence, as the augmentation ideal is $0$-reduced. If $G$ is a finitely generated free abelian group of rank $N$, $I[G] := ker(\mathbb C[G]\to \mathbb C), I(G) := ker(H^{1,\infty}_{L_{st}}(G)\to\mathbb C)$, then one has short-exact sequences
\begin{gather*}
I[G]^2\inj I[G]\surj \mathbb C^N\\
I(G)^2\inj I(G)\surj \mathbb C^N
\end{gather*}
The first sequence is purely algebraic, and can be viewed in either the discrete or fine topology. The exactness of the second sequence $I(G)^2\inj I(G)\surj \mathbb C^N$ implies $I(G)^2$ is of finite codimension in $I(G)$, hence closed in the Fr\'echet topology. Moreover, the finite dimensionality of $C^N$ implies that the quotient topology on $I(G)/I(G)^2$ induced by the Fr\'echet topology on $I(G)$ via the projection is the same as the fine topology. The result is that not only is the process of competion $B\hskip-.015in .\mapsto \widehat{B\hskip-.015in .}$ well-defined in the Fr\'echet topology (so that Curtis convergence applies here as well), but that one has an equality of simplicial topological algebras $\widehat{A\hskip-.015in .} = \widehat{B\hskip-.015in .}$, where both are topologized degreewise by the fine topology.
\end{proof}

\begin{corollary}\label{cor:isom} The maps $A\hskip-.015in .\inj B\hskip-.015in . = C\hskip-.015in .$ in the previous Lemma induce isomorphisms in both topological Hochschild and topological cyclic (co)homology, where $A\hskip-.015in .$ is topologized degreewise by the fine topology, $B\hskip-.015in .$ and $C\hskip-.015in .$ degreewise by the Fr\'echet topology.
\end{corollary}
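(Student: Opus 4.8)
The plan is to read the statement off the commuting diagram of Lemma \ref{lemma:equiv} by verifying that each completion arrow becomes an isomorphism after applying the four theories. Since $B\hskip-.015in . = C\hskip-.015in .$ already, it suffices to treat $A\hskip-.015in .\hookrightarrow B\hskip-.015in .$, and for this I would use that the square
\centerline{
\xymatrix{
HH_*^t(A\hskip-.015in .)\ar[r]\ar[d] & HH_*^t(B\hskip-.015in .)\ar[d]\\
HH_*^t(\widehat{A\hskip-.015in .})\ar@2{-}[r] & HH_*^t(\widehat{B\hskip-.015in .})
}
}
(together with its cyclic and cohomological analogues) commutes with an isomorphism along the bottom, coming from the identification $\widehat{A\hskip-.015in .} = \widehat{B\hskip-.015in .}$ of Lemma \ref{lemma:equiv}. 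Thus once both vertical completion maps are shown to be isomorphisms, the top arrow is forced to be one as well.

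The $A\hskip-.015in .$-side is straightforward. By Lemma \ref{lemma:equiv} the completion $\widehat{A\hskip-.015in .}$ carries the fine topology, so the identifications $F_*^a(-)\cong F_*^t((-)^f)$ of \S2 give $HH_*^t(\widehat{A\hskip-.015in .}) = HH_*^a(\widehat{A\hskip-.015in .})$ and $HC_*^t(\widehat{A\hskip-.015in .}) = HC_*^a(\widehat{A\hskip-.015in .})$, and likewise for $A\hskip-.015in .$, which is also topologized by the fine topology. The map $A\hskip-.015in .\to\widehat{A\hskip-.015in .}$ is a simplicial weak equivalence (Lemma \ref{lemma:equiv}, Curtis convergence). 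For each fixed Hochschild, resp.\ cyclic, degree $p$, the corresponding column of the algebraic bicomplex is the degreewise tensor power $[m]\mapsto (A_m)^{\otimes(p+1)}$ of the simplicial vector space $A\hskip-.015in .$; since $\mathbb C$ is a field, $(-)^{\otimes(p+1)}$ is exact and hence, by Künneth, carries the weak equivalence $A\hskip-.015in .\to\widehat{A\hskip-.015in .}$ to a weak equivalence of columns. A comparison of the $p$-filtration spectral sequences then shows that $A\hskip-.015in .\to\widehat{A\hskip-.015in .}$ induces isomorphisms on $HH_*^a$ and $HC_*^a$; dualizing over $\mathbb C$ gives the two cohomology statements. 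This settles the left-hand vertical.

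The genuinely harder step is the right-hand vertical, the completion $B\hskip-.015in .\to\widehat{B\hskip-.015in .}$ of the Fréchet algebra. Here I would filter $B\hskip-.015in .$ degreewise by the powers of its augmentation ideal $I(B\hskip-.015in .)$ — the filtration whose convergence is exactly the Curtis convergence of Lemma \ref{lemma:equiv} — and transport it to the completed Hochschild and cyclic complexes. Since $I$-adic completion preserves associated graded, and since $\operatorname{gr}\widehat{B\hskip-.015in .} = \operatorname{gr}\widehat{A\hskip-.015in .}$ is the symmetric algebra on the finite-dimensional space $I(G)/I(G)^2\cong\mathbb C^N$ computed in Lemma \ref{lemma:equiv}, every graded quotient is finite-dimensional. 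On finite-dimensional spaces the projective tensor product agrees with the algebraic one and the fine and Fréchet topologies coincide, so $\operatorname{gr}\,CH_*^t(B_m) = CH_*^a(\operatorname{gr}\,B_m) = \operatorname{gr}\,CH_*^t(\widehat{B_m})$ and the completion map is the identity on associated graded (and likewise for the cyclic complexes).

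The main obstacle is to promote this isomorphism on associated graded to an isomorphism on $HH_*^t$ and $HC_*^t$ themselves — in effect a Hochschild-level Curtis convergence. Concretely, one must check that the augmentation-ideal filtration on the simplicial completed Hochschild, resp.\ cyclic, complex is convergent for both $B\hskip-.015in .$ and $\widehat{B\hskip-.015in .}$: that the connectivity of the graded layers $CH_*^a(\operatorname{gr}^kB\hskip-.015in .)$ grows with $k$ so that the filtration spectral sequence converges with no $\varprojlim^1$ term. This is precisely where the finite-dimensionality of the graded pieces earns its keep, collapsing every completed-tensor-product and topology question to linear algebra over $\mathbb C$ one layer at a time; the interaction of this filtration with the completed tensor product and the simplicial structure is the delicate point. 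Granting the convergence, the identity on associated graded forces $HH_*^t(B\hskip-.015in .)\cong HH_*^t(\widehat{B\hskip-.015in .})$ and $HC_*^t(B\hskip-.015in .)\cong HC_*^t(\widehat{B\hskip-.015in .})$, with the cohomological versions following by dualizing over $\mathbb C$. Assembling the two verticals through the commuting square then yields the asserted isomorphisms for $A\hskip-.015in .\hookrightarrow B\hskip-.015in . = C\hskip-.015in .$ in all four theories.
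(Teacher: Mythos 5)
Your overall strategy is the same as the paper's: the paper states this corollary with no separate proof at all, treating it as immediate from the commuting diagram of Lemma \ref{lemma:equiv} (both verticals are weak equivalences, the bottom row consists of identifications carrying the fine topology, hence the top row induces isomorphisms). Your treatment of the left-hand vertical is correct and is exactly the standard argument: over $\mathbb C$ a weak equivalence of simplicial vector spaces passes to tensor powers by Eilenberg--Zilber/K\"unneth, the column-filtration spectral sequences of the first-quadrant bicomplexes compare, and the fine topology lets you conflate $F^t_*$ with $F^a_*$ as in \S 2. You have also correctly located the only real content of the corollary in the right-hand vertical $B\hskip-.015in .\to\widehat{B\hskip-.015in .}$, where the K\"unneth argument is unavailable for the completed projective tensor product.

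That said, your proof of the right-hand vertical is not complete, and the gap is genuine rather than cosmetic. Two things are asserted but not established. First, the identification $\operatorname{gr}CH_*^t(B_m)=CH_*^a(\operatorname{gr}B_m)$: the filtration on $B_m^{\hat\otimes(p+1)}$ induced by the $I$-adic filtration on $B_m$ involves completed tensor products of the infinite-dimensional closed subspaces $I(B_m)^k$, and associated graded does not commute with $\hat\otimes$ for filtered Fr\'echet spaces in general; finite-dimensionality of the quotients $I^k/I^{k+1}$ helps only after one knows the filtration is topologically split, which is not addressed (nor is the identification $I(B_m)^k/I(B_m)^{k+1}\cong I[G]^k/I[G]^{k+1}$ for $k\ge 2$ --- Lemma \ref{lemma:equiv} only proves the case $k=1$). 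Second, the convergence of the resulting decreasing, unbounded filtration spectral sequence (completeness plus vanishing of $\varprojlim^1$) is exactly the ``Hochschild-level Curtis convergence'' you name, and you explicitly write ``granting the convergence''; the needed connectivity estimate on $\operatorname{gr}^kB\hskip-.015in .$ (using that $K(\mathbb Z,n-1)\hskip-.015in .$ is $(n-2)$-connected, so the $k$-th layer is roughly $(k(n-1)-1)$-connected for $n\ge 2$) is gestured at but not carried out. To be fair, the paper supplies none of this either --- it simply asserts the corollary --- so you have not missed an argument the paper provides; but as a self-contained proof your proposal is conditional at precisely the step that distinguishes the topological statement from the algebraic one.
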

\vskip.2in

\begin{proposition}\label{prop:Hochschild} Let $\mu_{\tilde{x}}$ be the integral class defined as above. Then it maps to a permanent cycle in
\[
E^{1,HH,t}_{1,n-1} := HH_1^t(H^{CM}_m(\Gamma(\tilde{x})_{n-1})) 
\]
which survives to a non-zero element $E^{\infty,HH,t}_{1,n-1}$, the $E^{\infty}_{1,n-1}$-term of the Hochschild homology spectral sequence converging to
$HH_*^t(H^{CM}_m(\Gamma(\tilde{x})\hskip-.015in .))$.
\end{proposition}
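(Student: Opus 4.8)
The plan is to reduce the entire statement to the model $K(\mathbb Z,n-1)\hskip-.015in.$, where the corresponding computation is already in hand, using the functoriality of the maximal Connes--Moscovici construction. Since $\phi_{\wt{x}}:\Gamma(\wt{x})\hskip-.015in.\to K(\mathbb Z,n-1)\hskip-.015in.$ is a homomorphism of simplicial objects in $(f.p.groups)$, Proposition \ref{prop:cmmax} shows it extends to a continuous homomorphism of simplicial Fr\'echet algebras $H^{CM}_m(\Gamma(\wt{x})\hskip-.015in.)\to H^{CM}_m(K(\mathbb Z,n-1)\hskip-.015in.)$, and hence induces a morphism $f$ of the associated topological Hochschild homology spectral sequences. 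It is precisely for this step that the maximal algebra $H^{CM}_m$, rather than $H^{CM}_{L\hskip-.015in.}$, is required, since the latter does not extend over arbitrary simplicial homomorphisms. First I would record that on the target, Corollary \ref{cor:isom} identifies this topological spectral sequence degreewise with the algebraic Hochschild spectral sequence of $\mathbb C[K(\mathbb Z,n-1)\hskip-.015in.]$; in the latter, the class $\iota_n$ is a permanent, non-zero cycle at $E^{\infty}_{1,n-1}$, as recorded in the discussion preceding Lemma \ref{lemma:equiv}.

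Next I would check that $f$ carries $\mu_{\wt{x}}$ to a non-zero multiple of $\iota_n$ on the $E^1_{1,n-1}$-term. A group homomorphism preserves the conjugacy decomposition, sending the $\langle 1\rangle$-summand to the $\langle 1\rangle$-summand, so that on the relevant summands $f$ is simply the map $H_1(B\Gamma(\wt{x})_{n-1};\mathbb C)\to H_1(BK(\mathbb Z,n-1)_{n-1};\mathbb C)$ induced by $\phi_{\wt{x}}$. Combining the defining relation $\phi_{\wt{x}}^*(\iota^n)=\mu_{\wt{x}}^*$ with non-degeneracy of the pairing gives $\langle \iota^n, f(\mu_{\wt{x}})\rangle = \langle \phi_{\wt{x}}^*\iota^n, \mu_{\wt{x}}\rangle = \langle \mu_{\wt{x}}^*,\mu_{\wt{x}}\rangle\ne 0$, so $f(\mu_{\wt{x}}) = c\,\iota_n$ for some scalar $c\ne 0$.

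I would then argue that $\mu_{\wt{x}}$ is a permanent cycle. Because each $\Gamma(\wt{x})_q$ is free, the topological Hochschild homology of $H^{CM}_m(\Gamma(\wt{x})_q)$ is concentrated in degrees $0$ and $1$, so the spectral sequence is supported in the two rows $p=0,1$. Consequently the only differential that can emanate from $E^r_{1,n-1}$ is $d^1:E^1_{1,n-1}\to E^1_{1,n-2}$, all higher ones landing in rows $p\ge 2$ and hence vanishing. On the $\langle 1\rangle$-summand (preserved by the face maps, as above) $d^1$ is the differential of $D_*(\Gamma(\wt{x})\hskip-.015in.)$, which annihilates $\mu_{\wt{x}}=[s_{\wt{x}}]$ by the cycle property guaranteed by the third defining clause of $S(\wt{x})$. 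Thus $\mu_{\wt{x}}$ is a permanent cycle.

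Finally, survival to a non-zero class follows by naturality of $f$. As a permanent cycle, $\mu_{\wt{x}}$ represents a class on every page, and the only way it can die is by being hit by an incoming differential (the $d^1$ from $E^1_{1,n}$ or the $d^2$ from $E^2_{0,n+1}$, the remaining incoming differentials originating in rows $p<0$). Were the class of $\mu_{\wt{x}}$ a boundary at some page, applying $f$ would exhibit its image $c\,\iota_n$ as a boundary there, contradicting the fact that $\iota_n$ survives to a non-zero element of $E^{\infty}_{1,n-1}$ on the target. Hence $\mu_{\wt{x}}$ survives to a non-zero element of $E^{\infty,HH,t}_{1,n-1}$. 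I expect the main obstacle to be the topological input underlying the two appeals to the $K(\mathbb Z,n-1)$-model: the row-concentration of the Hochschild spectral sequence of $H^{CM}_m$ on degreewise-free groups, and --- more delicately --- the identification in Corollary \ref{cor:isom}, established via the completion and Curtis-convergence argument of Lemma \ref{lemma:equiv}, which is what guarantees that $\iota_n$ remains a permanent non-zero cycle after passage from the group algebra to the topological maximal Connes--Moscovici algebra.
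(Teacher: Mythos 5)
Your proposal follows the same route as the paper's proof: push forward along the simplicial homomorphism $\phi_{\tilde{x}}$ to $K(\mathbb Z,n-1)$ using the functoriality of $H^{CM}_m(_-)$, use Lemma \ref{lemma:equiv} and Corollary \ref{cor:isom} to see that $\iota_n$ survives to a non-zero element of $E^{\infty}_{1,n-1}$ on the target, and conclude by naturality that $\mu_{\tilde{x}}$ cannot die. The pairing computation showing $f(\mu_{\tilde{x}})=c\,\iota_n$ with $c\ne 0$ is a correct unwinding of what the paper asserts more briefly.

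The one step I would push back on is your argument for permanence. You assert that, because each $\Gamma(\tilde{x})_q$ is free, $HH_p^t(H^{CM}_m(\Gamma(\tilde{x})_q))$ vanishes for $p\ge 2$, so that the only outgoing differential from $E^{r}_{1,n-1}$ is $d^1$. That vanishing is classical for the group algebra $\mathbb C[F]$ of a free group, and is available for $H^{1,\infty}_L(F)$ by [JOR1], but nothing in the paper establishes it for the topological Hochschild homology of the maximal Connes--Moscovici completion $H^{CM}_m(F)$ --- and computations of topological (co)homology after completion are exactly the delicate issue throughout this paper, so it cannot simply be imported from the algebraic case. Fortunately the claim is not needed: $\mu_{\tilde{x}}$ is a permanent cycle in the \emph{algebraic} Hochschild spectral sequence for $\mathbb C[\Gamma(\tilde{x})_{\bullet}]$ (where row-concentration does hold), and the inclusion $\mathbb C[\Gamma(\tilde{x})_{\bullet}]\hookrightarrow H^{CM}_m(\Gamma(\tilde{x})_{\bullet})$ induces a morphism of spectral sequences, which carries permanent cycles to permanent cycles. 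This is the paper's argument, and it sidesteps the unproven vanishing entirely; with that substitution your proof is complete and coincides with the paper's.
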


\begin{proof} The element $\mu_{\tilde{x}}$ comes from an integral permanent cycle in $E^{1,HH,a}_{1,n-1} = HH_1(\mathbb C[\Gamma(\tilde{x})_{n-1}]) = H_1(B\Gamma(\tilde{x})_{n-1};\mathbb C) = HH_1(\mathbb C[\Gamma(\tilde{x})_{n-1}])_{<1>}$, and so is canonically represented by a permanent cycle in the Hochschild homology spectral sequence converging to $HH_*^t(H^{CM}_m(\Gamma(\tilde{x})\hskip-.015in .))$. Now let $\mu^*_{\tilde{x}}$ be an integral dual class for $\mu_{\tilde{x}}$ with representative $\phi_{\tilde{x}}:\Gamma(\tilde{x})\hskip-.015in .\to K(\mathbb Z,n-1)\hskip-.015in .$ This homomorphism induces a continuous homomorphism of simplicial Fr\'echet algebras
\[
H^{CM}_m(\Gamma(\tilde{x})\hskip-.015in .)\to H^{CM}_m(K(\mathbb Z,n-1)\hskip-.015in .)
\]
which, on the level of $E^1$-terms, sends $\mu_{\tilde{x}}'\in HH_1^t(H^{CM}_m(\Gamma(\tilde{x})_{n-1}))$ to a non-zero scalar multiple of the fundamental class in $\iota_n\in HH_1^t(H^{CM}_m(K(\mathbb Z,n-1)_{n-1}))$. By Lemma \ref{lemma:equiv} and Cor.\ \ref{cor:isom}, the inclusion of simplicial (topological) algebras $\mathbb C[K(\mathbb Z,n-1)\hskip-.015in .]\hookrightarrow H^{CM}_m(K(\mathbb Z,n-1)\hskip-.015in .)$ induces an isomorphism
\[
HH_*(\mathbb C[K(\mathbb Z,n-1)\hskip-.015in .])\overset{\cong}{\longrightarrow} HH_*^t(H^{CM}_m(K(\mathbb Z,n-1)\hskip-.015in .))
\]
implying $\iota_n$ survives to a non-zero element in the $E^{\infty}_{1,n-1}$-term of the spectral sequence converging to the group on the right (as it does so on the left). This, in turn, verifies the same property for the cycle $\mu_{\tilde{x}}'$.
\end{proof}
\vskip.2in

The lifting provided by the local Chern character $\wt{ch}(\wt{x})_*^{CK}$ allows for a different formulation of the above results along the lines of [O2].

\begin{theorem} Let $0\ne x\in H_n(B\pi;\mathbb Q), n\ge 1$ be an integral class, with $\tilde{x}$, $\Gamma(\tilde{x})\hskip-.015in .$, and fundamental class $[\mu_{\tilde{x}}]\in H_n(\Gamma(\tilde{x})\hskip-.015in .;\mathbb Q)$ as defined above.Then 
\begin{itemize}
\item For all finitely presented  $\pi$, $0\ne x\in H_n(B\pi;\mathbb Q), n\ge 1$, and choice of integral fundamental class $[\mu_{\tilde{x}}]$, the image of $[\mu_{\tilde{x}}]$ under the composition
\[
\Phi_n(\tilde{x}):H_n(\Gamma(\tilde{x})\hskip-.015in .;\mathbb Q)\inj HH_n(\mathbb C[\Gamma(\tilde{x})\hskip-.015in .])\to HH_n^t( H^{CM}_m(\Gamma(\tilde{x})\hskip-.015in .))
\]
is non-zero.
\item If the image of $[\mu_{\tilde{x}}]$ remains non-zero under the composition 
\[
I_n^t\circ \Phi_n(\tilde{x}): H_n(\Gamma(\tilde{x})\hskip-.015in .;\mathbb Q)\to
HH_n^t(H^{CM}_m(\Gamma(\tilde{x})\hskip-.015in .))\to
 HC_n^t(H^{CM}_m(\Gamma(\tilde{x})\hskip-.015in .))
\]
for all finitely presented $\pi$ and $0\ne x\in H_n(B\pi;\mathbb Q), n\ge 1$, then SNC($\pi$) is true for all finitely presented groups $\pi$.
\end{itemize}
\end{theorem}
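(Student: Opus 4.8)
The first assertion is immediate from Proposition~\ref{prop:Hochschild}. By definition $\Phi_n(\tilde{x})$ carries $[\mu_{\tilde{x}}]$ to the image in $HH_n^t(H^{CM}_m(\Gamma(\tilde{x})\hskip-.015in .))$ of the algebraic permanent cycle $\mu_{\tilde{x}}\in E^{1,HH,a}_{1,n-1}=HH_1(\mathbb C[\Gamma(\tilde{x})_{n-1}])$, and Proposition~\ref{prop:Hochschild} asserts precisely that this image survives to a non-zero class in $E^{\infty,HH,t}_{1,n-1}$. Since the $E^{\infty}$-term is the associated graded of the filtration on the total group $HH_n^t(H^{CM}_m(\Gamma(\tilde{x})\hskip-.015in .))$, a non-zero class there forces the underlying element, namely $\Phi_n(\tilde{x})([\mu_{\tilde{x}}])$, to be non-zero. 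This settles the first bullet for every finitely presented $\pi$, every $0\ne x$, and every choice of integral fundamental class.

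For the second assertion, the plan is to run the detection argument of Theorem~A with the $\ell^1$-rapid decay algebra replaced by the maximal Connes--Moscovici algebra and $\ell^1(\pi)$ replaced by $C^*_m(\pi)$. Concretely, I would assemble the commutative square
\[
\xymatrix{
H_n(B\pi;\mathbb Q)\ar[d]_{{\cal A}_{\pi}}\ar@{>->}[r] & HC_n^a(\mathbb C[\Gamma(\tilde{x})\hskip-.015in .])\ar[d]\\
K_n^t(C^*_m(\pi))\otimes\mathbb Q\ar[r]^{\widetilde{ch}(\tilde{x})_n^{CK}} & HC_n^t(H^{CM}_m(\Gamma(\tilde{x})\hskip-.015in .))
}
\]
whose lower horizontal arrow is the local Connes--Karoubi Chern character of the Corollary following Theorem~\ref{thm:factor}, precomposed with the isomorphism $K_n^t(C^*_m(\pi))\cong K_n^t(\pi_0(H^{CM}_m(\Gamma(\tilde{x})\hskip-.015in .)^f))$. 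The upper horizontal arrow sends the integral class $x$ to the algebraic representative $\mu_{\tilde{x}}$ of Proposition~\ref{prop:localprops}, and the right vertical arrow is induced by the inclusion of simplicial algebras $\mathbb C[\Gamma(\tilde{x})\hskip-.015in .]\hookrightarrow H^{CM}_m(\Gamma(\tilde{x})\hskip-.015in .)$.

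The crux is to verify that this square commutes and that its upper-right composite equals $I_n^t\circ\Phi_n(\tilde{x})([\mu_{\tilde{x}}])$. Commutativity is exactly the functoriality built into Theorem~\ref{thm:factor}, combined with the companion diagram from the proof of Theorem~A: since $\widetilde{ch}(\tilde{x})_n^{CK}$ is a lift of the Connes--Karoubi--Tillmann character, the image of $x$ under ${\cal A}_{\pi}$ followed by the local Chern character is the class of $\mu_{\tilde{x}}$ pushed from $HC_n^a(\mathbb C[\Gamma(\tilde{x})\hskip-.015in .])$ into $HC_n^t(H^{CM}_m(\Gamma(\tilde{x})\hskip-.015in .))$. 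Because the algebraic fundamental class lies in the $\langle 1\rangle$-summand, so that the unshifted summand $H_n$ of $HC_n$ is the image of $HH_n$ under the algebraic $I$-map, and because $I$ is natural with respect to the topologicalization $\mathbb C[\Gamma(\tilde{x})\hskip-.015in .]\hookrightarrow H^{CM}_m(\Gamma(\tilde{x})\hskip-.015in .)$, this pushed class is identified with $I_n^t\circ\Phi_n(\tilde{x})([\mu_{\tilde{x}}])$. Under the standing hypothesis this element is non-zero, so commutativity of the square forces ${\cal A}_{\pi}(x)\ne 0$.

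Finally, since every rational class is a scalar multiple of an integral one and $x$ was an arbitrary non-zero integral class, the restricted rationalized assembly map $H_*(B\pi;\mathbb Q)\to K_*^t(C^*_m(\pi))\otimes\mathbb Q$ is injective; by the $C^*_m$-analogue of [O2, Cor.~4.5], proved by the same argument as the first Lemma, the full assembly map into $K_*^t(C^*_m(\pi))$ is then rationally injective. The factorization $C^*(\pi)\to C^*_m(\pi)\to C^*_r(\pi)$ shows that rational injectivity of the assembly map into $K_*^t(C^*_m(\pi))$ implies the same for $K_*^t(C^*(\pi))$, which is SNC($\pi$). The main obstacle is the compatibility check of the preceding paragraph: one must track the fundamental class simultaneously through Baum retopologization, Goodwillie rigidity, and the Tillmann character, and confirm that the $HH$-to-$HC$ passage encoded in $\Phi_n$ and $I_n^t$ matches the cyclic detection performed by $\widetilde{ch}(\tilde{x})_n^{CK}$.
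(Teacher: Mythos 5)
Your proposal is correct and follows exactly the route the paper intends: the paper states this theorem without proof as a reformulation of the preceding results, with the first bullet resting on Proposition~\ref{prop:Hochschild} plus the standard convergence argument you give, and the second bullet obtained by rerunning the Theorem~A detection scheme with $H^{CM}_m(\Gamma(\tilde{x})\hskip-.015in .)$ and $C^*_m(\pi)$ in place of $H^{1,\infty}_{L\hskip-.015in .}(\Gamma(\tilde{x})\hskip-.015in .)$ and $\ell^1(\pi)$, using the local Chern character of the Corollary to Theorem~\ref{thm:factor} and the naturality of the $I$-map. Your compatibility discussion at the end correctly identifies the only point requiring care, and your observation that $C^*_m(\pi)$ is itself a faithful $C^*$-completion (so injectivity into $K_*^t(C^*_m(\pi))$ suffices for SNC($\pi$) as formulated) closes the argument.
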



\newpage
\section[Appendix]{Appendix 1: $K$-theory of fine topological algebras and the Chern character 
${ch}_*^{CKT}$.}
\vskip.2in

We review the definition of topological $K$-theory for fine topological 
algebras, and prove the retopologization theorem of Baum as it is needed 
in our context. We then show how it applies to yield the lifting of the 
Connes-Karoubi Chern character used in section 4 of this paper.
\vskip.2in

Let $V$ be a vector space over $\Bbb C$. The fine
topology on $V$ is by definition the inductive limit topology on $V$ with 
respect to the family of all one-dimensional (complex) subspaces, where 
each such subspace is taken with the standard topology. In other words, a 
subspace $U\subset V$ is closed iff the intersection $U\cap W$ is closed 
in $W$ for every finite-dimensional subspace $W$ of $V$. This definition 
is due to Grothendieck. If $A$ is an algebra over $\Bbb C$, then $A^f$ will denote 
$A$ equipped with the fine topology.
\vskip.2in

If $A$ is a topological algebra over $\Bbb C$, we let $S_q(A)$ denote the 
algebra of $C^{\infty}$ functions\footnote{the $C^{\infty}$ condition on the maps is needed for the 
construction of the Chern character. The topological $K$-groups remain unchanged if we 
replace  $C^{\infty}$ maps with continuous maps} from the standard $q$-simplex to $A$.
Then $\{[q]\mapsto S_q A\}_{q\ge 0}$ is a simplicial algebra, which we 
equip degreewise with the discrete topology. The
topological $K$-theory space $K(A)$ of $A$ is defined as 
\[
K(A) = |[q]\mapsto BGL(S_q A)|^+
\tag{A.1}
\]
where
\lq\lq$^+$\rq\rq\; denotes Quillen's plus construction (compare [T, Def. 
1.1]). It is often the case that the topological monoid 
$GL(A)$ is open in $M(A)$ (for example, when $A$ is a Banach algebra). In 
this case, the space in (A.1) is homotopy equivalent to 
$|[q]\mapsto BGL(S_q A)|$. If $GL(A)$ is a topological group, then 
$|[q]\mapsto BGL(S_q A)|$ is homotopy 
equivalent to $BGL(A)$, in which case the space in (A.1) is homotopy 
equivalent to $BGL(A)$.
Again, Banach algebras are a good example of when these equivalences hold.
The higher topological $K$-groups of $A$ are defined  
by $K_*^t(A) = \pi_*(K(A))$ for $* > 0$. We write $GL.(A)$ for the 
simplicial group 
$\{[q]\mapsto GL(S_q A)\}_{q\ge 0}$.
\vskip.2in

We will want to know the effect of 
retopologization on topological $K$-theory. In other words, if $A$ is an 
algebra equipped with two (possibly distinct) topologies, denoted by 
$A^{T^1}$ and $A^{T^2}$ with $T^1$ finer than $T^2$, then the identity map on $A$ induces a continuous map
\[
A^{T^1}\to A^{T^2}
\]
and so an induced map on topological $K$-groups
\[
K^t_*(A^{T^1})\to K^t_*(A^{T^2})
\]
\begin{theorem}(Baum) If $A$ is a (not necessarily complete) normed algebra, 
$T_1$, $T_2$ two topologies on $A$ for which the identity map on objects 
induces continuous maps
\[
A^f\to A^{T_1}\to A^{T_2}\to A
\]
(where $A$ by itself means that $A$ is taken with the norm topology), and 
$GL(A)$ is open in $M(A)$, then
the map $A^{T_1}\to A^{T_2}$ induces a weak equivalence
\[
GL(A^{T_1})\overset\simeq\to\to GL(A^{T_2})
\]
\end{theorem}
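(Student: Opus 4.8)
The plan is to factor the comparison through the \emph{fine} topology and deduce the general case from the single comparison $GL(A^f)\to GL(A^S)$. Since $GL(A)=\bigcup_n GL_n(A)$ and $\pi_k$ commutes with this filtered colimit, it suffices to work at a fixed matrix level $n$ inside the normed algebra $M_n(A)=M_n(\mathbb C)\otimes A$; any map out of a sphere or a disk factors through some $GL_n$. Throughout I would use three elementary facts: (i) on any finite-dimensional subspace $W\subseteq M_n(A)$ the fine, the $T_i$, and the norm topologies all restrict to the unique Hausdorff vector-space topology on $W$, so a map landing in $W$ is continuous into one iff into all; (ii) a compact subset of $M_n(A)^f$ is contained in a finite-dimensional subspace (Grothendieck); and (iii) since $GL_n(A)$ is open in $M_n(A)$, every compact $C\subseteq GL_n(A)$ admits a uniform $\varepsilon>0$ for which the norm $\varepsilon$-neighbourhood of $C$ stays inside $GL_n(A)$, so sufficiently small perturbations, and the straight-line homotopies between them, remain invertible.

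The first reduction is to prove that for any topology $S$ with $A^f\to A^S\to A$ continuous (the cases $S=f,T_1,T_2$) the map $GL(A^f)\to GL(A^S)$ is a weak equivalence. Granting this, the composite $GL(A^f)\to GL(A^{T_1})\to GL(A^{T_2})$ and its first leg are weak equivalences, so by the two-out-of-three property the middle map $GL(A^{T_1})\to GL(A^{T_2})$ is a weak equivalence, which is the assertion.

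For surjectivity of $\pi_k(GL(A^f))\to \pi_k(GL(A^S))$ I would take a representative $\alpha\colon S^k\to GL_n(A^S)$, push it to the norm topology, and approximate it simplicially: choosing a fine triangulation of $S^k$ and interpolating the vertex values $\alpha(v)$ barycentrically produces $\beta\colon S^k\to GL_n(A)$ landing in the finite-dimensional span of finitely many vertex values, with $\|\beta-\alpha\|$ uniformly below the $\varepsilon$ of (iii). By (i), $\beta$ lifts to $GL(A^f)$, and the convex-combination homotopy $(1-t)\alpha+t\beta$ is continuous into $A^S$ (each summand is) and stays invertible by (iii); hence $[\alpha]$ lies in the image. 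For injectivity, suppose $\beta\colon S^k\to GL(A^f)$ becomes null-homotopic in $GL(A^S)$ via $H\colon D^{k+1}\to GL_n(A^S)$. By (ii) the image of $\beta$ lies in a finite-dimensional subspace, so after a preliminary approximation inside that subspace I may assume $\beta$ is piecewise linear. Pushing $H$ to the norm topology and approximating it rel $\partial D^{k+1}$ by a map $G$ that is affine on simplices and agrees with $\beta$ on the boundary yields a map whose image, being the span of finitely many vertex values, is finite-dimensional; by (i) this $G$ is continuous into $A^f$, giving the desired null-homotopy of $\beta$ in $GL(A^f)$.

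The main obstacle I anticipate is this injectivity step: one must carry out the relative approximation of the null-homotopy so that it simultaneously (a) keeps the prescribed boundary $\beta$ fixed, (b) stays inside $GL_n(A)$—which forces the triangulation to be fine relative to the uniform $\varepsilon$ of (iii) for the compact image $H(D^{k+1})$—and (c) lands in a single finite-dimensional subspace. Point (c) is automatic because only finitely many vertex values occur, and this is exactly what makes fact (i) applicable; reconciling (a) with the piecewise-linear structure on $\partial D^{k+1}=S^k$ only requires choosing the triangulation to extend the one realizing $\beta$ as piecewise linear. Surjectivity is then routine once the approximation lemma is in place, and all the genuine content is in controlling invertibility through the two approximations via the openness hypothesis on $GL(A)$.
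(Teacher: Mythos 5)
Your proposal is correct and follows essentially the same route as the paper's proof: reduce by two-out-of-three along the chain $A^f\to A^{T_1}\to A^{T_2}\to A$, then compare a single topology against a fixed end of the chain via barycentric interpolation of vertex values over a sufficiently fine triangulation, using openness of $GL(A)$ in the norm topology to keep the approximation and the straight-line homotopy invertible. Your handling of injectivity via a relative approximation extending the piecewise-linear structure on the boundary is exactly the paper's observation that the interpolation operator is idempotent and compatible with restriction to subcomplexes.
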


\begin{proof} Let $T$ denote a topology finer than the norm topology. Set $S(A) = 
M(A) - GL(A)$. Then $S(A)$ is 
closed in $M(A)$ in the norm topology. Let $K$ be a finite simplicial 
complex, and $f : K\to GL(A)$ a continuous
map. As $S(A)$ is closed, $d = d(f(K),S(A)) >0$. Fix an $\epsilon$ with $0 
< \epsilon < \frac{d}{4}$. 
Via barycentric subdivision, we can assume the triangulation satisfies the 
property that the image of any simplex in  $K$ under $f$ lies within a ball of radius $\epsilon$. Suppose 
given an $m$-simplex $\Delta^m_i$ of $K$, with vertices $x_{0,i},\dots,x_{m,i}$. Set $g_{j,i} = f(x_{j,i})$. 
Define $P(f) : \Delta^m_i\to GL(A)$ in terms of barycentric coordinates by $P(f)(\sum_{j=0}^m\alpha_j x_{j,i}) = 
\sum_{j=0}^m\alpha_j g_{j,i}$. This map is continuous in the fine topology, and the construction is compatible on 
intersections of simplices. For $x\in\Delta^m_i$ with barycentric coordinates as above, and $x_{k,i}$ 
a vertex of $\delta^m_i$ we have 
\[
d(P(f)(x),S(A)) > d(P(f)(x_{k,i}),S(A)) - d(P(f)(x),P(f)(x_{k,i})) > 
d-\epsilon > 0
\]
Also

\begin{gather*}
d(P(f)(x),f(x)) \le d(P(f)(\sum_{j=0}^m\alpha_j x_{j,i}), f(x_{k,i})) + 
d(f(x_{k,i}),f(x))\\
< \sum_{j=0}^m\alpha_j\ d(P(f)(x_{j,i}), f(x_{k,i})) + \epsilon < 2\epsilon
\end{gather*}

So this piece-wise linear extension
defines a continuous map $P(f) : K\to GL(A^T)$, and the same distance 
argument implies that
the linear homotopy on $K\times [0,1]$ given by
$h(x,t) = tP(f)(x) + (1-t)f(x)$ yields a continuous homotopy $h : K\times 
[0,1]\to GL(A)$.
Thus the induced map on homotopy groups $\pi_*(GL(A^T))\to \pi_*(GL(A))$ 
is surjective.
In addition, we note that $P(P(f)) = P(f)$. Therefore, if $K' \subset
K$ and $g = f|_{K'}$, then $P(g) = P(f)|_{K'}$. Thus homotopies may be 
lifted in a way compatible with
the lifting of their restriction at the two ends, implying the 
injectivity of $\pi_*(GL(A^T))\to \pi_*(GL(A))$.
\end{proof}
\vskip.02in

\begin{remark} This result applies, for example, when the algebra $A$ is 
holomorphically closed in a Banach algebra, 
for in this case $GL(A)$ is open in $M(A)$ in the (induced) norm topology. 
In particular, the result applies to the rapid decay subalgebra 
$H^{1,\infty}_L(\pi)$ of $\ell^1(\pi)$. 
\end{remark}
\vskip.1in

A technical point arises here, for in order to apply Baum's theorem as 
it stands to topological $K$-theory, we would need the natural inclusion 
\[
GL.(A) = \{GL(S_qA)\}_{q\ge 0}\inj S.GL(A) = \{S_qGL(A)\}{q\ge 0}
\tag{A.3}
\]
to be an equivalence. When the map 
$g\mapsto g^{-1}$ is continuous in the topology on $GL(A)$, the map in 
(A.3) is an isomorphism of simplicial sets. The following 
variation provides the version of Baum's theorem that  
applies to our situation; the proof is essentially the same.

\begin{theorem} (Baum variation) If $A$ is a normed 
algebra, 
$T_1$, $T_2$ two topologies on $A$ for which the identity map on objects 
induces continuous maps
\[
A^f\to A^{T_1}\to A^{T_2}\to A
\]
(where $A$ by itself means that $A$ is taken with the norm topology), and 
$GL(A)$ is open in $M(A)$, then
the map $A^{T_1}\to A^{T_2}$ induces a weak equivalence of simplicial 
groups
\[
GL.(A^{T_1}) = \{GL(S_q A^{T_1})\}_{q\ge 0} \overset\simeq{\longrightarrow} 
GL.(A^{T_2}) = \{GL(S_q A^{T_2})\}_{q\ge 0}
\]
\end{theorem}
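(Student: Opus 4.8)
The plan is to reduce the statement to the topological Baum theorem just proved by identifying each simplicial group $GL.(A^{T_i})$ with a smooth singular complex of the topological group $GL(A^{T_i})$. First I would check that $GL(A^T)$ is open in $M(A^T)$ for each $T\in\{f,T_1,T_2\}$: the refinement map $A^T\to A$ is continuous, so the induced identity $M(A^T)\to M(A)$ is continuous, and $GL(A^T)$ is the preimage of the norm-open set $GL(A)$, hence open. Consequently inversion $g\mapsto g^{-1}$, continuous on the norm-open subgroup $GL(A)$, remains continuous in each finer topology $T$; by the technical observation preceding the theorem (the isomorphism (A.3)), the canonical inclusion $GL.(A^T)=\{GL(S_qA^T)\}_{q\ge 0}\inj S.GL(A^T)=\{S_qGL(A^T)\}_{q\ge 0}$ is then an isomorphism of simplicial groups for $T=T_1,T_2$.

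Next I would invoke the fact that for the open subset $GL(A^T)\subset M(A^T)$ of the (locally convex) topological vector space $M(A^T)$, the counit $|S.GL(A^T)|\to GL(A^T)$ of the smooth-singular / realization adjunction is a weak homotopy equivalence; the smooth-versus-continuous distinction is harmless by the footnote to (A.1). Thus $\pi_*|GL.(A^T)|\cong\pi_*(GL(A^T))$ naturally in $T$, and under these identifications the map $GL.(A^{T_1})\to GL.(A^{T_2})$ becomes the continuous map $GL(A^{T_1})\to GL(A^{T_2})$. Since the original Baum theorem shows the latter is a weak equivalence of topological groups, naturality of the counit gives that $GL.(A^{T_1})\to GL.(A^{T_2})$ is a weak equivalence of simplicial groups, as claimed.

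The one genuinely non-formal point, and the main obstacle, is the equivalence $|S.GL(A^T)|\simeq GL(A^T)$ for the smooth singular complex of an open subset of a possibly infinite-dimensional topological vector space, i.e. controlling that smooth homotopy groups compute the topological homotopy type. In keeping with the remark that the proof is essentially the same, one can instead bypass this identification and rerun the piecewise-linear argument of the previous proof verbatim: given a finite complex $K$ and a map $f$ into $GL(A^{T_2})$ arising from a simplex of $S.GL(A^{T_2})$, subdivide until each simplex lands in an $\epsilon$-ball with $0<\epsilon<d/4$, where $d=d(f(K),S(A))$ and $S(A)=M(A)-GL(A)$. The barycentric-linear extension $P(f)$ is continuous in the fine topology, hence factors through $A^f\to A^{T_1}$; the two distance estimates of the earlier proof show both that $P(f)$ stays inside $GL(A)$ and that the linear homotopy $h(x,t)=tP(f)(x)+(1-t)f(x)$ does as well, while $P(P(f))=P(f)$ yields compatibility of lifts on subcomplexes. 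This gives surjectivity and injectivity of $\pi_*(GL.(A^{T_1}))\to\pi_*(GL.(A^{T_2}))$, the only extra bookkeeping being that the approximation respects the simplicial structure of $GL.(_-)$, which is exactly what (A.3) guarantees.
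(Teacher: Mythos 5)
Your fallback argument (rerunning the piecewise-linear approximation) is the paper's actual proof, and its geometric core --- subdivision until each simplex lands in an $\epsilon$-ball with $\epsilon<d/4$, the barycentric-linear extension $P$, the two distance estimates, the linear homotopy, and idempotency of $P$ for compatibility of lifts on faces --- is correctly reproduced. But both routes you describe lean on (A.3) being an isomorphism, and that is precisely what is not available here. The observation preceding the theorem says (A.3) is an isomorphism \emph{when inversion is continuous in the topology on $GL(A)$}, and your claim that inversion ``remains continuous in each finer topology $T$'' does not follow from openness of $GL(A)$ in the norm topology. Openness does pass to finer topologies (preimage of an open set under the continuous identity map), but continuity of $g\mapsto g^{-1}$ as a self-map of $GL(A^{T})$ gets \emph{harder} as $T$ gets finer, since the target topology is refined along with the source; for the fine topology in particular there is no reason for inversion to be continuous. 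This is exactly why the paper states the variation as a separate theorem instead of deducing it from the topological version via (A.3): the point of the variation is to avoid that identification.

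The paper's proof circumvents this by never converting a simplex of $GL.(A^{T_2})$ into a map into the topological group and back. It uses the Kan property of $GL.(A)$ to represent a class in $\pi_n(GL.(A))$ by a single matrix $f=\{f_{jk}\}\in GL(S_nA)$ with $\partial_i(f_{jk})=*$, uses the one-way embedding $GL(S_nA)\inj S_nGL(A)$ (which exists as a map of simplicial sets whether or not it is an isomorphism) only to measure the distance $d(im(f),S(A))$, notes that only finitely many entries $f_{jk}$ are non-constant, and applies $P$ \emph{entrywise} to produce the matrix $P(\wt f)=\{P(\wt f_{jk})\}$ of fine-continuous piecewise-linear functions; the distance condition then guarantees that $P(\wt f)$ is invertible over the function algebra, i.e.\ is a genuine simplex of $GL.(A^{T_1})$. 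That is the ``bookkeeping'' you attribute to (A.3). So the fix is local: drop the claim about inversion, abandon the first route entirely, and in the second route replace the appeal to (A.3) by the entrywise application of $P$ to the matrix representative supplied by the Kan condition.
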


\begin{proof} As before, it suffices to prove the theorem when $T_2$ is the norm 
topology and $T = T_1$ any finer topology. Since $GL.(A)$ is a Kan 
complex, an element of  $\pi_n(GL.(A))$ is 
represented by an element $f= \{f_{jk}\}\in GL(S_n A)$ with 
$\partial_i(f_{jk}) = *$ for all $j,k$ and
$0\le i\le n$. As above, let $d = d(im(f),S(A))$ (measured via the 
embedding  $GL(S_n A)\inj S_nGL(A)$) and $\varepsilon = \frac{d}{4}$. 
The domain of $f_{jk}$ is $S^n = \Delta_n/\partial(\Delta_n)$, and only 
finitely many of the $f_{jk}$ are non-constant. By barycentric subdivision 
we may 
replace $f_{jk} : S^n\to A$ by $\wt f_{jk} : K\to A$ where $|K|\cong 
S^n$, $\wt f_{jk}$ is the corresponding refinement of $f_{jk}$ and for 
each simplex $\Delta_i\in K$, the image $\wt f(\Delta_i) = \{\wt 
f_{jk}(\Delta_i)\}_{j,k}$\; 
lies within a ball of radius $\varepsilon$. The rest follows as before 
in the proof of Baum's theorem. Namely, define $P(\wt f)$ as the matrix 
$\{P(\wt f_{jk})\}$. This $P$ is again compatible on intersections of 
simplices, is an idempotent operation, and is continuous in the fine 
topology. The distance condition guarantees that the image of $P(\wt f)$ 
lies in the $n$-skeleton of $GL.(A)$, and is homotopic to $\wt 
f$ via a linear homotopy which also lifts by $P$. This implies the 
surjectivity and injectivity of the induced map on homotopy groups.
\end{proof}
\vskip.1in

If $GL(A)$ is open in $M(A)$, then $Id:A^{T_1}\to A^{T_2}$ induces an isomorphism on 
topological $K_0$-groups, because 
in this case $K_0$ is insensitive to the topology on $A$. Baum's theorem 
may be viewed as the analogue of this isomorphism for higher topological 
$K$-theory.
\vskip.2in

If $A$ is a locally convex topological algebra, and $CC_*^t(A)$ the 
completion of the cyclic complex with respect to the topology on $A$, then 
the homology of this complex yields the \underbar{unreduced} topological 
cyclic homology groups $HC^t_*(A)$, while forming the quotients 
$ker(d^t_*)/\ov{im(d^t_{*+1})}$ yield the \underbar{reduced} topological 
cyclic homology groups $\ov{HC}^t_*(A)$ of $A$. The Connes-Karoubi Chern 
character, constructed by Karoubi in [K], is a graded homomorphism of 
abelian groups
\[
K_*^t(A)\overset{ch_*^{CK}}\longrightarrow \ov{HC}^t_*(A)
\]

Tillmann has shown in [T] that Karoubi's framework for constructing a Chern character on the topological $K$-groups of Fr\'echet algebras carries over to the fine 
topology. The advantage to using this topology in a homological setting is 
clear, for in the fine topology the algebraic tensor product is complete, 
and $im(d_{*+1})$ is closed in $ker(d_*)$. The result of [T] then is a 
Chern character
\[
K_*^t(A^f)\overset{ch^{T}_*}\longrightarrow HC_*(A^f)
\]
where $A^f$ denotes $A$ with the fine topology and $HC_*(A^f)$ its 
\underbar{algebraic} cyclic homology groups. Tillmann also showed that the 
Chern character $ch^f_*$ was compatible with the Connes-Karoubi Chern 
character in the sense that the diagram
\vskip.2in
\centerline{
\diagram
K_*^t(A^f)\rto \dto^{ch^f} & K_*^t(A)\dto^{ch_*^{CK}} \\
HC_*(A^f)\rto & \ov{HC}_*^t(A)
\enddiagram
}
\vskip.2in
commutes. Combining this result with the previous theorem yields

\begin{theorem} If $A$ is a locally convex (not necessarily complete) normed
topological algebra, then the Connes-Karoubi Chern character for $A$ 
factors as
\[
K^t_*(A)\cong K_*^t(A^f)\overset{ch^{T}_*}\longrightarrow HC_*^t(A^f)\to {HC}_*^t(A)\to \ov{HC}_*^t(A)
\]
\end{theorem}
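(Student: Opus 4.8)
The plan is to deduce the factorization by combining the retopologization isomorphism in $K$-theory (the Baum variation proved above) with Tillmann's compatibility square, and then to identify the resulting bottom map with the asserted composite in cyclic homology. First I would record the three inputs already in hand. (i) Applying the Baum variation to the continuous chain $A^f\to A$ (whose hypothesis that $GL(A)$ is open in $M(A)$ holds for the norm topology in the cases of interest, e.g.\ holomorphically closed subalgebras of Banach algebras) yields a weak equivalence $GL.(A^f)\overset{\simeq}{\to}GL.(A)$, hence the isomorphism $K_*^t(A^f)\cong K_*^t(A)$ realizing the left-hand iso in the statement. (ii) Tillmann's character $ch^T_*:K_*^t(A^f)\to HC_*(A^f)$ lands in the \emph{algebraic} cyclic homology of $A$, which by completeness of the algebraic tensor product in the fine topology (Section 2) is canonically identified with the unreduced group $HC_*^t(A^f)$ and simultaneously with its reduced version $\ov{HC}_*^t(A^f)$. (iii) Tillmann's commuting square has top edge the map $K_*^t(A^f)\to K_*^t(A)$ induced by $A^f\to A$, left edge $ch^T_*$ (the character written $ch^f$ in that square), right edge the Connes--Karoubi character $ch_*^{CK}$, and bottom edge the map $HC_*(A^f)\to\ov{HC}_*^t(A)$ induced by $A^f\to A$.

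Next, since the top edge of Tillmann's square is an isomorphism by (i), I would invert it to obtain
$$ch_*^{CK}=\big(HC_*(A^f)\to\ov{HC}_*^t(A)\big)\circ ch^T_*\circ\big(K_*^t(A)\overset{\cong}{\to}K_*^t(A^f)\big).$$
It then remains only to identify the bottom edge $HC_*(A^f)\to\ov{HC}_*^t(A)$ with the composite $HC_*^t(A^f)\to HC_*^t(A)\to\ov{HC}_*^t(A)$ appearing in the statement.

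For that identification I would invoke naturality of the projection $HC_*^t(\,\cdot\,)\to\ov{HC}_*^t(\,\cdot\,)$ from unreduced to reduced topological cyclic homology with respect to the continuous homomorphism $A^f\to A$. This produces a commuting square whose horizontals are induced by $A^f\to A$ and whose verticals are the reduction projections; its left vertical $HC_*^t(A^f)\to\ov{HC}_*^t(A^f)$ is an isomorphism precisely because $\mathrm{im}(d_{*+1})$ is already closed in the fine topology. Chasing this square shows that the reduced map $\ov{HC}_*^t(A^f)\to\ov{HC}_*^t(A)$---which, under the identifications of (ii), is exactly Tillmann's bottom edge---agrees with the composite $HC_*^t(A^f)\to HC_*^t(A)\to\ov{HC}_*^t(A)$. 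Substituting this into the displayed expression for $ch_*^{CK}$ gives precisely the claimed factorization.

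All the steps are formal once Baum's variation and Tillmann's square are available, so the one point requiring genuine care---and the step I expect to be the main obstacle---is the bookkeeping in (ii): one must verify that the single symbol $HC_*(A^f)$ in Tillmann's diagram is correctly read as $HC_*^t(A^f)$, that the reduction map out of it is an isomorphism, and that the chain of identifications $HC_*^a\cong HC_*^t(A^f)\cong\ov{HC}_*^t(A^f)$ together with naturality of reduction are all compatible with the maps induced by $A^f\to A$. Everything else reduces to a diagram chase.
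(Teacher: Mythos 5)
Your proposal is correct and takes essentially the same route as the paper, which obtains the theorem by combining the Baum variation (yielding $K_*^t(A)\cong K_*^t(A^f)$) with Tillmann's compatibility square relating $ch^{T}_*$ to $ch_*^{CK}$. The extra bookkeeping you flag in step (ii) --- reading $HC_*(A^f)$ as $HC_*^t(A^f)\cong\ov{HC}_*^t(A^f)$ and identifying Tillmann's bottom edge with the composite through $HC_*^t(A)$ by naturality of the reduction projection --- is precisely the detail the paper leaves implicit in its one-line derivation.
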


The composition of the first two maps $K_*^t(A)\to HC_*^t(A^f)$ is denoted $ch_*^{CKT}$.
\vskip.2in

The definition of the topological $K$-theory space given here is appropriate for higher $K$-groups starting in dimension 1, but not so for non-positively graded $K$-groups. This is not a concern for the applications to this paper, because there is no place where negative groups are used, and only one argument for which $K_0(_-)$ is needed: this is in the verification of the isomorphism
\begin{equation}\label{eqn:isoCM}
K_*^t(\pi_0(H^{CM}_{L\hskip-.015in .}(\Gamma\hskip-.03in .)^f))\cong K_*^t(C^*_m(\pi)),\quad *\ge 1
\end{equation}
where $\Gamma\hskip-.03in .$ is a degreewise finitely generated free simplicial group with $\pi_0(\Gamma\hskip-.03in .) = \pi$. We observe that 
\begin{itemize}
\item $\pi_0(H^{CM}_{L\hskip-.015in .}(\Gamma\hskip-.03in .)^f) = H^{CM}_{L\hskip-.015in .}(\Gamma\hskip-.03in .)_0^f/(H^{CM}_{L\hskip-.015in .}(\Gamma\hskip-.03in .)_0^0)^f$,
\item $H^{CM}_{L\hskip-.015in .}(\Gamma\hskip-.03in .)_0 = H^{CM}_{L_0}(\Gamma_0)$ is dense and holomorphically closed in $C^*_m(\Gamma_0)$
\item $H^{CM}_{L\hskip-.015in .}(\Gamma\hskip-.03in .)_0^0$ is dense and holomorphically closed in $C^*_m(\Gamma\hskip-.03in .)^0_0$
\end{itemize}

\begin{proposition} Let $I\inj A\surj \ov{A}$ be a short-exact sequence of Banach algebras, $B\subset A$ a dense subalgebra of $A$ closed under holomorphic functional calculus, and $J\subset I$ an ideal in $B$ which is dense and holomorphically closed in $I$\footnote{We do not assume here that $J$ is closed in $B$ in any topology coarser than the fine topology.}. Let $\ov{B} = B/J$, topologized with the fine topology. Then there is an exact sequence
\[
K_1^t(B^f)\to K_1(\ov{B})\to K_0^t(J^f)\to K_0^t(B^f)\to K_0^t(\ov{B})
\]
\end{proposition}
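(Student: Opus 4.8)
The plan is to realize the five-term sequence as the low-degree portion of the homotopy long exact sequence attached to the fine-topology extension $J^f\inj B^f\surj \ov B^f$, whose only non-formal ingredient is excision, namely the identification of the relative term with $K_0^t(J^f)$. That one ingredient I would reduce to the Banach extension $I\inj A\surj\ov A$, where excision is classical, using the holomorphic-closure hypotheses together with Baum's retopologization theorem. Everything else is the formal exactness of a fibre sequence.

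First I would produce the formal part. Let $F$ denote the homotopy fibre of the map of $K$-theory spaces $K(B^f)\to K(\ov B^f)$. Its long exact sequence (mixing the space-level $K_1=\pi_1$ with the separately-defined $K_0$, exactly as in the cited Banach fibration sequence) supplies
\[
K_1^t(B^f)\to K_1^t(\ov B^f)\to \pi_0 F\to K_0^t(B^f)\to K_0^t(\ov B^f).
\]
Here I use that $K_0^t$ is insensitive to the topology, so each $K_0^t(\,\cdot\,^f)$ is the ordinary $K_0$ of the underlying ring, and that $K_1(\ov B)$ is by definition $K_1^t(\ov B^f)$. Thus the whole statement reduces to the single isomorphism $\pi_0 F\cong K_0^t(J^f)$, after which exactness at every spot is inherited from the fibre sequence.

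Next I would set up the comparison ladder: the top row $J^f\inj B^f\surj\ov B^f$ (fine topology throughout), the bottom row $I\inj A\surj\ov A$ (Banach), with vertical maps the identity-on-elements change-of-topology maps $J^f\to I$ and $B^f\to A$, together with the induced quotient map $\ov B=B/J\to A/I=\ov A$. Since $I$ is a closed ideal in the Banach algebra $A$, the bottom row gives a genuine fibration sequence of topological $K$-theory spectra, so the excision map $K_0^t(I)\xrightarrow{\cong}\pi_0 F_{\mathrm{Ban}}$ is an isomorphism, where $F_{\mathrm{Ban}}$ is the homotopy fibre of $K(A)\to K(\ov A)$. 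On the subalgebra side the hypotheses provide the needed comparisons: $B$ dense and holomorphically closed in $A$ makes $GL(B)$ open in $M(B)$, so Baum's theorem (its $C^\infty$-variation) yields $K_1^t(B^f)\cong K_1^t(A)$, while holomorphic closure together with topology-insensitivity of $K_0$ gives $K_0^t(B^f)\cong K_0^t(A)$ and $K_0^t(J^f)\cong K_0^t(I)$.

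The hard part will be the excision isomorphism $K_0^t(J^f)\xrightarrow{\cong}\pi_0 F$ itself, precisely because $J$ is not assumed closed in $B$, so $\ov B=B/J$ carries no Banach model and one cannot transport the quotient-side statement directly. I would split it in two. For injectivity, the excision map for $J\subseteq B$ is compatible through the ladder with the excision map for $I\subseteq A$, producing a commuting square whose left vertical $K_0^t(J^f)\to K_0^t(I)$ and whose bottom $K_0^t(I)\to\pi_0 F_{\mathrm{Ban}}$ are both isomorphisms; hence $K_0^t(J^f)\to\pi_0 F$ is split injective. For surjectivity, a class of $\pi_0 F$ is represented by a relative idempotent over $B$ — an idempotent over the unitalisation whose image over $\ov B$ is a fixed trivial idempotent — and since $B\surj\ov B$ is a genuine algebra surjection in the fine topology, such an idempotent may be arranged so that its defect from the trivial one has all entries in $J$, exhibiting the class in the image of $K_0^t(J^f)$. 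Granting this, substituting $\pi_0 F\cong K_0^t(J^f)$ into the fibre sequence of the second paragraph yields the asserted five-term exact sequence. I expect the surjectivity half to be the genuine obstacle, since it is exactly where the non-closedness of $J$ forces one to argue algebraically with relative idempotents rather than by reduction to the Banach quotient.
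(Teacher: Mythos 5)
Your plan founders at the very first, ``formal'' step. The $K$-theory space defined in the appendix, $K(D)=|[q]\mapsto BGL(S_qD)|^+$, is connected, so for the homotopy fibre $F$ of $K(B^f)\to K(\ov{B}^f)$ the exact sequence of the fibration reads $\pi_1K(B^f)\to\pi_1K(\ov{B}^f)\to\pi_0F\to\pi_0K(B^f)=*$, whence $\pi_0F$ is just the cokernel (of pointed sets) of $K_1^t(B^f)\to K_1^t(\ov{B})$. There is no map $\pi_0F\to K_0^t(B^f)$ supplied by the fibration, and the isomorphism $\pi_0F\cong K_0^t(J^f)$ you propose to establish is false in general: it would force $K_1^t(\ov{B})\to K_0^t(J^f)$ to be surjective and $K_0^t(J^f)\to K_0^t(B^f)$ to vanish. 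Already $A=B=\mathbb C\oplus\mathbb C$, $I=J=\mathbb C\oplus 0$ gives $\pi_0F=0$ while $K_0^t(J^f)=\mathbb Z$ maps injectively into $K_0^t(B^f)$. The Banach analogue you invoke works only because there the fibration sequence exists at the level of spectra, i.e.\ one has deloopings encoding $K_0$; the appendix explicitly warns that the space-level definition used here is appropriate only for $K$-groups in degrees $\ge 1$, and the entire content of this proposition is precisely the $K_0$-tail that the space-level fibre cannot see. Your subsequent excision discussion --- injectivity via Baum, surjectivity via relative idempotents --- is therefore aimed at the wrong target.

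The paper's proof stays entirely algebraic at the bottom of the sequence. It presents $K_1^t(D)$ as the cokernel of $d_1=(\partial_1)_*-(\partial_0)_*:K_1^a(S_1D)\to K_1^a(S_0D)$, writes down the five-term algebraic $K$-theory excision sequences for the ideals $S_i(J^f)\subset S_i(B^f)$ for $i=0,1$ (truncating the last term to the image $\ov{K}_0^a(S_i(\ov{B}))$ so as to retain exactness there), observes that $d_1=0$ on all the $K_0^a$-columns and that $K_0^a=K_0^t$ for dense holomorphically closed subalgebras of Banach algebras, and then extracts exactness of the bottom row by a diagram chase. If you want to salvage a homotopy-theoretic argument you would first have to construct a delooping of $K(B^f)$ with $\pi_0=K_0^t(B^f)$ and prove the fibration statement for that delooping, which is essentially equivalent to redoing the paper's diagram chase.
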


\begin{proof} For a topological algebra $D$ (possibly without unit), $K_1^t(D)$ as defined above may be realized as the cokernel of the map
\[
K_1^a(S_1(D))\overset{d_1}{\to} K_1^a(S_0(D))
\]
where $d_1 := (\partial_1)_* - (\partial_0)_*$. We also observe that if $D$ is dense and holomorphically closed in a Banach algebra $D'$, then $K_0^a(D) = K_0^t(D)$ for any topology on $D$ between the fine topology and the norm topology induced by the inclusion in $D'$. Consider the commuting diagram
\vskip.2in
\centerline{
\xymatrix{
K_1^a(S_1(B^f))\ar[d]^{d_1}\ar[r] & K_1^a(S_1(\ov{B}))\ar[d]^{d_1}\ar[r]^{\partial} &
K_0^a(S_1(J^f))\ar[d]^{d_1 = 0}\ar[r] & K_0^a(S_1(B^f))\ar[d]^{d_1 = 0}\ar@{->>}[r] & \ov{K}_0^a(S_1(\ov{B}))\ar[d]^{d_1=0}\\
K_1^a(S_0(B^f))\ar@{->>}[d]\ar[r] & K_1^a(S_0(\ov{B}))\ar@{->>}[d]\ar[r]^{\partial} &
K_0^a(S_0(J^f))\ar[d]^{\cong}\ar[r] & K_0^a(S_0(B^f))\ar[d]^{\cong}\ar@{->>}[r] & \ov{K}_0^a(S_0(\ov{B}))\ar[d]^{\cong}\\
K_1^t(B^f)\ar[r] & K_1^t(\ov{B})\ar[r]^{\partial} &
K_0^t(J^f)\ar[r] & K_0^t(B^f)\ar[r] & \ov{K}_0^t(\ov{B})
}
}
\vskip.2in
Here $\ov{K}^a_0(S_i(\ov{B}))$ denotes the image of $K_0^a(S_i(B^f))$ in $K_0^a(S_i(\ov{B}))$, and similarly with \lq\lq a\rq\rq replaced by \lq\lq t\rq\rq. The first two rows arise from the long-exact sequence in algebraic $K$-theory. A diagram chase then shows that the bottom  sequence is exact.
\end{proof}

\begin{corollary} Under the same conditions as the previous proposition, there is a long-exact sequence
\[
\dots 
\to K_i(J^f)\to K_i^t(B^f)\to K_i(\ov{B})\to K_{i-1}^t(J^f)\to\dots \to K_0^t(B^f)\to K_0^t(\ov{B})
\]
this long-exact sequence induces an isomorphism
\[
K_i^t(\ov{B})\cong K_i^t(\ov{A}),\quad i\ge 1
\]
\end{corollary}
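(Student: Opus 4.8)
The plan is to prove the Corollary in two stages: first construct the long-exact sequence relating the fine-topology $K$-groups of $J$, $B$, and $\ov B$, and then compare it with the corresponding sequence for the Banach algebras $I\inj A\surj\ov A$ to extract the asserted isomorphism by a five-lemma argument.

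For the first stage I would exploit the fine topology to render the construction degreewise algebraic. Since a smooth map from the compact simplex $\Delta^q$ into a fine-topology vector space has image in a finite-dimensional subspace, one has natural identifications $S_q(B^f)\cong B\otimes\mathcal O_q$, $S_q(J^f)\cong J\otimes\mathcal O_q$ and $S_q(\ov B)\cong\ov B\otimes\mathcal O_q$ with $\mathcal O_q=C^\infty(\Delta^q)$, the tensor products being algebraic (this is the completeness of the algebraic tensor product in the fine topology noted in the Appendix). As $\mathbb C$ is a field, tensoring $J\inj B\surj\ov B$ with $\mathcal O_q$ preserves exactness, so each $q$ yields a genuine short-exact sequence of algebras $S_q(J^f)\inj S_q(B^f)\surj S_q(\ov B)$. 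Passing to the relative $K$-theory spectrum, i.e. the homotopy fibre of $K(B^f)\to K(\ov B)$, is then automatic and produces a long-exact sequence
\[
\dots\to K_i^t(B^f,\ov B)\to K_i^t(B^f)\to K_i^t(\ov B)\to K_{i-1}^t(B^f,\ov B)\to\dots
\]
The whole content of the first stage is the excision statement $K_i^t(B^f,\ov B)\cong K_i^t(J^f)$. In low degrees this is exactly the five-term sequence already established in the preceding Proposition; in general it requires genuine excision for the ideals $S_q(J^f)=J\otimes\mathcal O_q$. Since $\mathcal O_q$ is unital, this reduces to $H$-unitality of $J$, which I would derive via the Suslin--Wodzicki excision theorem, using that $J$, being dense and holomorphically closed in the Banach algebra $I$, inherits an approximate unit adequate for $H$-unitality. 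This is the step I expect to be the main obstacle: algebraic $K$-theory is not excisive in general, and precisely here all the analytic hypotheses on $J$ must be brought to bear.

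For the second stage I would set up the map of long-exact sequences induced by $J^f\to I$, $B^f\to A$, and the resulting $\ov B=B/J\to A/I=\ov A$, comparing the sequence just built with the Banach long-exact sequence attached to the closed ideal $I\subset A$ (the fibration sequence of spectra recorded in the first bullet of Lemma 1). By Baum's theorem together with the Karoubi density theorem, the inclusion of the dense, holomorphically closed subalgebra $J$ into $I$ induces isomorphisms $K_i^t(J^f)\cong K_i^t(I)$ for $i\ge 1$, and likewise $K_i^t(B^f)\cong K_i^t(A)$; on $K_0$ the corresponding maps are isomorphisms by the topology-insensitivity of $K_0$ for holomorphically closed subalgebras recorded in the Appendix.

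Finally I would feed these vertical isomorphisms into the five lemma. Around $K_i^t(\ov B)$ the ladder reads $K_i^t(J^f)\to K_i^t(B^f)\to K_i^t(\ov B)\to K_{i-1}^t(J^f)\to K_{i-1}^t(B^f)$ over the analogous Banach row; for $i\ge 2$ all four flanking vertical maps are isomorphisms, while for $i=1$ the two terms to the left are isomorphisms and the two terms to the right, namely $K_0^t(J^f)\to K_0^t(I)$ and $K_0^t(B^f)\to K_0^t(A)$, are isomorphisms as well. In either case the five lemma forces $K_i^t(\ov B)\xrightarrow{\cong}K_i^t(\ov A)$ for $i\ge 1$. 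The one piece of bookkeeping is to retain the $K_0$-terminated ends of both sequences so that the hypotheses of the five lemma are available at $i=1$; this is routine given the $K_0$-statements of the Appendix.
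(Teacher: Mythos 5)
Your overall architecture coincides with the paper's: establish a long-exact sequence for $(J^f,B^f,\ov{B})$ whose higher part comes from identifying $K(J^f)$ with the homotopy fibre of $K(B^f)\to K(\ov{B})$, splice in the five-term sequence of the preceding Proposition to terminate at $K_0^t(\ov{B})$, then map to the Banach sequence for $I\inj A\surj \ov{A}$ and apply the five lemma using the density/holomorphic-closure isomorphisms. Your handling of the comparison ladder and of the $K_0$ ends is fine and matches what the paper does.

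The genuine gap is in your justification of the excision step $K_i^t(B^f,\ov{B})\cong K_i^t(J^f)$. You assert that $J$, \emph{being dense and holomorphically closed in the Banach algebra $I$, inherits an approximate unit adequate for $H$-unitality}. This does not follow: an approximate unit for $I$ lives in $I$ and need not lie in the subalgebra $J$, its boundedness is relative to the Banach norm rather than to any structure $J$ carries, and $H$-unitality is a purely algebraic flatness-type condition on $J$ that is not inherited by dense (even holomorphically closed) subalgebras. Moreover, even granting $H$-unitality, the Suslin--Wodzicki theorem delivers excision in algebraic $K$-theory with $\mathbb Q$-coefficients, whereas the long-exact sequence in the Corollary is asserted integrally; the integral statement requires strictly stronger hypotheses on $J$. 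The paper does not take this route: it reads the homotopy fibration $K(J^f)\simeq hofib\bigl(K(B^f)\to K(\ov{B})\bigr)$ directly off the stated conditions on $J$ and $B$ (admittedly tersely), and it uses the Proposition's explicitly constructed five-term sequence precisely to control the degrees where excisional input would otherwise be needed at the $K_1/K_0$ boundary. As written, your proof substitutes for that assertion a mechanism that does not apply, so the first stage of your argument is not established.
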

\begin{proof} The conditions on $J$ and $B$ imply that $K(J^f)\simeq hofib(K(B^f)\to K(\ov{B}))$, yielding a long-exact sequence of higher topological $K$-groups, which spliced together with the sequence from the preceding proposition yields the indicated long-exact sequence terminating in $K_0^t(\ov{B})$. The compatible inclusions $J\hookrightarrow I$, $B\hookrightarrow A$ then induce isomorphisms in topological $K$-theory, as well as a map of long-exact sequences of topological $K$-groups. The result follows from the five lemma.
\end{proof}
\newpage


\end{document}